\theoremstyle{plain}
\newtheorem{thm}{Theorem}
\newtheorem{lem}[thm]{Lemma}
\newtheorem{cor}[thm]{Corollary}
\newtheorem{prop}[thm]{Proposition}
\theoremstyle{remark}
\newtheorem{rem}[thm]{Remark}
\newtheorem{construct}[thm]{Construction}
\begin{document}

\title{A decomposition theorem in II$_1$--factors}

\author[Dykema]{K. Dykema$^*$}
\address{Department of Mathematics, Texas A\&M University, College Station, TX, USA.}
\email{ken.dykema@math.tamu.edu}
\thanks{\footnotesize ${}^{*}$ Research supported in part by NSF grant DMS--1202660.}
\author[Sukochev]{F. Sukochev$^{\S}$}
\address{School of Mathematics and Statistics, University of new South Wales, Kensington, NSW, Australia.}
\email{f.sukochev@math.unsw.edu.au}
\thanks{\footnotesize ${}^{\S}$ Research supported by ARC}
\author[Zanin]{D. Zanin$^{\S}$}
\address{School of Mathematics and Statistics, University of new South Wales, Kensington, NSW, Australia.}
\email{d.zanin@math.unsw.edu.au}

\subjclass[2000]{47C15}


\begin{abstract} 
Building on results of Haagerup and Schultz, we decompose an arbitrary operator in a diffuse, finite von Neumann algebra
into the sum of a normal operator and an s.o.t.-quasinilpotent operator. We also prove an analogue of Weyl's inequality relating eigenvalues and singular values for operators in a diffuse, finite von Neumann algebra.
\end{abstract}
\maketitle

\section{Introduction}

The following result is due to Schur (see e.g. \cite{Zhang}) and is one of the cornerstones of linear algebra.
\begin{thm} For every matrix $T\in M_n(\mathbb{C}),$ there is a unitary matrix $U\in M_n(\mathbb{C})$ such that $U^{-1}TU$ is an upper-triangular matrix.
\end{thm}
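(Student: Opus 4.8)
The plan is to proceed by induction on the size $n$ of the matrix. The base case $n=1$ is trivial, since every $1\times 1$ matrix is already upper-triangular. For the inductive step I assume the statement holds for all matrices of size $n-1$ and take an arbitrary $T\in M_n(\mathbb{C})$.

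The crucial input is the fundamental theorem of algebra: the characteristic polynomial $\det(\lambda I-T)$ has a root $\lambda_1\in\mathbb{C}$, so $T$ admits an eigenvalue $\lambda_1$ with a unit eigenvector $v_1$. I would then extend $\{v_1\}$ to an orthonormal basis $\{v_1,\dots,v_n\}$ of $\mathbb{C}^n$ via Gram--Schmidt and let $U_1$ be the unitary whose columns are $v_1,\dots,v_n$. Because $Tv_1=\lambda_1 v_1$, the matrix $U_1^{-1}TU_1$ has first column $\lambda_1 e_1$, hence the block form
\[
U_1^{-1}TU_1=\begin{pmatrix}\lambda_1 & w^*\\ 0 & S\end{pmatrix}
\]
for some $w\in\mathbb{C}^{n-1}$ and $S\in M_{n-1}(\mathbb{C})$.

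Now I apply the inductive hypothesis to $S$: there is a unitary $V\in M_{n-1}(\mathbb{C})$ with $V^{-1}SV$ upper-triangular. Setting $U_2=\begin{pmatrix}1 & 0\\ 0 & V\end{pmatrix}$, which is unitary in $M_n(\mathbb{C})$, a routine $2\times 2$ block computation gives
\[
U_2^{-1}U_1^{-1}TU_1U_2=\begin{pmatrix}\lambda_1 & w^*V\\ 0 & V^{-1}SV\end{pmatrix},
\]
which is upper-triangular. Thus $U=U_1U_2$ is the desired unitary. The only real obstacle is the very first step, producing an eigenvalue: this is exactly where algebraic closedness of $\mathbb{C}$ is used, and over $\mathbb{R}$ the theorem fails for this reason. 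The remaining steps are elementary linear algebra, and in a von Neumann algebra framework the same scheme suggests that the obstruction to ``upper-triangularization'' will be the availability of a rich enough family of invariant projections, which is the role played by the Haagerup--Schultz projections later in the paper.
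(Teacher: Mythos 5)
Your proof is correct and complete: it is the standard induction on $n$ using the fundamental theorem of algebra to produce an eigenvector, Gram--Schmidt to build the unitary, and the block computation to reduce to the $(n-1)\times(n-1)$ case. The paper states this classical result of Schur without proof (citing \cite{Zhang}), and your argument is exactly the textbook one, so there is nothing to contrast.
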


Alternatively, there exists a basis in $\mathbb{C}^n$ such that the matrix of the operator $T$ with respect to this basis is upper-triangular. Taking the diagonal part $N$ of the operator $T$ in this basis, we obtain a normal operator. The difference $T-N$ is, obviously, a strictly upper-triangular matrix. Every strictly upper-triangular matrix is, clearly, nilpotent. Thus, an arbitrary matrix is a sum of a normal matrix and nilpotent matrix.

The following theorem due to Ringrose \cite{Ringrose} extends Schur's result to the realm of compact operators in Hilbert space. 
Recall that an operator $Q$ is quasinilpotent if its spectrum is $\{0\}$
and that, by the C$^*$--property, the functional calculus for self-adjoint operators and the spectral radius formula
 (see~\cite{KR1}),
for an operator $Q$ on Hilbert space we have 
\[
\|\big((Q^*)^nQ^n\big)^{1/2n}\|_\infty=\|(Q^*)^nQ^n\|_\infty^{1/2n}=\|Q^n\|_\infty^{1/n}
\]
and quasinilpotency of $Q$
is equivalent to
\begin{equation}\label{eq:Q*nQn}
\lim_{n\to\infty}\|\big((Q^*)^nQ^n)^{1/2n}\|_{\infty}=0.
\end{equation}
(Here and throughout this paper, we use the notation $\|X\|_\infty$ for the operator norm of a bounded operator $X$ on Hilbert space.)

\begin{thm}\label{ringrose} For every compact operator $T\in B(H),$ there exists an increasing net of projections $p_{\lambda}$,
$\lambda\in[0,1]$, with $p_0=0$ and $p_1=1$, such that, letting $p_{\lambda-0}=\vee_{\mu<\lambda}\,p_\mu,$
\begin{enumerate}[{\rm (a)}]
\item\label{rina} $Tp_{\lambda}=p_{\lambda}Tp_{\lambda}$ for all $\lambda\in[0,1].$
\item\label{rinb} for every $\lambda\in(0,1]$ either $p_{\lambda}=p_{\lambda-0}$ or $p_{\lambda}-p_{\lambda-0}$ is a one-dimensional projection.
\end{enumerate}
Furthermore, if for such a family we have $Tp_{\lambda}=p_{\lambda-0}Tp_{\lambda}$ for all $\lambda\in(0,1],$ then $T$ is quasinilpotent.
\end{thm}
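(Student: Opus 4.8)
\noindent\textit{Proof strategy for the last assertion of Theorem~\ref{ringrose}.}\quad
By the discussion preceding the theorem, $T$ is quasinilpotent if and only if $\|T^n\|_\infty^{1/n}\to 0$, so this is what I would prove. Fix $\varepsilon>0$. The plan has two steps. In \emph{Step 1} I produce a \emph{finite} partition $0=\lambda_0<\lambda_1<\cdots<\lambda_k=1$ of $[0,1]$ such that, writing $q_j:=p_{\lambda_j}-p_{\lambda_{j-1}}$ for $j=1,\dots,k$, every ``diagonal block'' is small, $\|q_jTq_j\|_\infty\le\varepsilon$. In \emph{Step 2} I use the triangular property (a) together with this smallness to bound $\|T^n\|_\infty$ by a polynomial in $n$ times $\varepsilon^{\,n-k+1}$, and conclude.

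\emph{Step 1.} Since $\lambda_{j-1}<\lambda_j$ and $(p_\lambda)$ is increasing we have $p_{\lambda_{j-1}}\le p_{\lambda_j-0}\le p_{\lambda_j}$, and combining this with (a) and the standing hypothesis $Tp_{\lambda_j}=p_{\lambda_j-0}Tp_{\lambda_j}$ gives $q_jTq_j=(p_{\lambda_j-0}-p_{\lambda_{j-1}})Tq_j$, whence $\|q_jTq_j\|_\infty\le\|p_{\lambda_j-0}T-p_{\lambda_{j-1}}T\|_\infty$. So it suffices to control the $B(H)$-valued function $g(\lambda):=p_\lambda T$. Here compactness of $T$ enters: if a bounded monotone net of projections converges strongly, then after right multiplication by the compact operator $T$ it converges in norm. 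Consequently $g$ has one-sided norm limits at every point of $[0,1]$, the left limit at $\lambda$ being $p_{\lambda-0}T$; and, using that the family $(p_\lambda)$ is right-continuous in the strong operator topology (the regularity one needs here in addition to (a)--(b)), $g$ is norm right-continuous. A $B(H)$-valued function with these properties is regulated, so there is a finite partition $\{\lambda_j\}$ on whose open subintervals $g$ has oscillation $<\varepsilon$; and since both $p_{\lambda_{j-1}}T$ (by right-continuity of $g$) and $p_{\lambda_j-0}T$ (the left limit of $g$) are norm limits of values $g(s)$ with $\lambda_{j-1}<s<\lambda_j$, we obtain $\|p_{\lambda_j-0}T-p_{\lambda_{j-1}}T\|_\infty\le\varepsilon$, i.e.\ $\|q_jTq_j\|_\infty\le\varepsilon$ for each $j$.

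\emph{Step 2.} By (a) and monotonicity of $(p_\lambda)$ one has $q_iTq_j=0$ whenever $i>j$, so $T=\sum_{1\le i\le j\le k}q_iTq_j$ and therefore
\[
T^n=\sum_{1\le i_0\le i_1\le\cdots\le i_n\le k}(q_{i_0}Tq_{i_1})(q_{i_1}Tq_{i_2})\cdots(q_{i_{n-1}}Tq_{i_n}).
\]
There are $\binom{n+k}{k-1}$ weakly increasing chains $i_0\le\cdots\le i_n$ in $\{1,\dots,k\}$, and in each such chain at most $k-1$ of the indices $l\in\{1,\dots,n\}$ can satisfy $i_{l-1}<i_l$; the remaining (at least $n-k+1$) factors are diagonal blocks $q_{i_l}Tq_{i_l}$ of norm $\le\varepsilon$, while each of the others has norm $\le\|T\|_\infty$. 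Hence $\|T^n\|_\infty\le\binom{n+k}{k-1}\,\varepsilon^{\,n-k+1}(1+\|T\|_\infty)^{k-1}$ for $n\ge k$, so $\limsup_n\|T^n\|_\infty^{1/n}\le\varepsilon$. Letting $\varepsilon\downarrow 0$ gives $\|T^n\|_\infty^{1/n}\to0$, i.e.\ $T$ is quasinilpotent.

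I expect Step 1 to be the crux: the delicate point is to make \emph{all} diagonal blocks small simultaneously through a \emph{single} finite partition. This rests on compactness of $T$ (to upgrade strong convergence of projections to norm convergence of $p_\lambda T$, so that $\lambda\mapsto p_\lambda T$ is regulated and hence partitionable to within $\varepsilon$ by finitely many points) and on a mild regularity of the family $(p_\lambda)$ — its right-continuity, equivalently that it has only finitely many jumps of size exceeding any given $\varepsilon$. Step 2 is then the routine ``strictly-upper-triangular up to $\varepsilon$'' bookkeeping together with the spectral radius formula.
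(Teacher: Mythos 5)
The paper offers no proof of Theorem~\ref{ringrose} --- it is imported from \cite{Ringrose} --- so your proposal can only be measured against the classical argument, and for the final (``furthermore'') assertion it is essentially that argument, correctly executed. The reduction $\|q_jTq_j\|_\infty\le\|p_{\lambda_j-0}T-p_{\lambda_{j-1}}T\|_\infty$ (using (a), the strict-triangularity hypothesis at $\lambda_j$, and $p_{\lambda_{j-1}}\le p_{\lambda_j-0}\le p_{\lambda_j}$) is right; compactness of $T$ does upgrade strong convergence of bounded monotone nets of projections to norm convergence of $p_\lambda T$, so $\lambda\mapsto p_\lambda T$ has one-sided norm limits everywhere and admits an $\varepsilon$-fine finite partition; and the counting in Step~2 (at most $k-1$ strict increases in a weakly increasing chain in $\{1,\dots,k\}$, hence at least $n-k+1$ diagonal factors of norm $\le\varepsilon$) gives $\limsup_n\|T^n\|_\infty^{1/n}\le\varepsilon$ as claimed. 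Note that hypothesis (b) is never used for this implication, and that you prove only the last assertion: the existence statement (a)--(b), which rests on the Aronszajn--Smith invariant subspace theorem together with a Zorn's lemma maximality argument, is not addressed at all.

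The one substantive point is the right-continuity of $(p_\lambda)$ that you introduce in Step~1. You are right that it is needed, but it deserves more than a parenthetical: without it the ``furthermore'' clause, as literally stated, is false, so this is a defect of the transcription of Ringrose's theorem rather than something your proof could avoid. For example, let $H=\mathbb{C}e_1\oplus L^2[0,1]$, let $T$ be the rank-one projection onto $\mathbb{C}e_1$, and let $p_\lambda$ act as $0$ on $\mathbb{C}e_1$ and as multiplication by $\chi_{[0,2\lambda]}$ on $L^2[0,1]$ for $\lambda\le\frac12$, while $p_\lambda=1$ for $\lambda>\frac12$. Then $(p_\lambda)$ is increasing with $p_0=0$, $p_1=1$, one checks $p_{\lambda-0}=p_\lambda$ for every $\lambda$ (so (b) holds vacuously) and $Tp_\lambda=p_{\lambda-0}Tp_\lambda$ for all $\lambda$, yet $T$ is not quasinilpotent: the one-dimensional gap is hidden in the right limit at $\lambda=\frac12$ and is never detected by $p_{\lambda-0}$. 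Ringrose's own formulation is in terms of a maximal nest of invariant subspaces with $N_-$ defined inside the nest, which rules this out; in the parametrized form one must explicitly require $p_\lambda=\wedge_{\mu>\lambda}p_\mu$. Finally, your closing remark that right-continuity is ``equivalently that it has only finitely many jumps of size exceeding any given $\varepsilon$'' is not accurate --- the latter property holds for any regulated function and does not imply right-continuity --- but nothing in the argument depends on that aside.
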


This yields as an immediate corollary the following decomposition result:

\begin{cor}\label{cor:ringrose} For every compact operator $T\in B(H),$ there exist a normal operator $N$ and a quasinilpotent operator $Q$ such that $T=N+Q.$
\end{cor}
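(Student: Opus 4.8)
The plan is to use the net of projections $(p_\lambda)_{\lambda\in[0,1]}$ supplied by Theorem~\ref{ringrose} to split $T$ into a "diagonal" summand and a "strictly upper triangular" summand, exactly as in the matrix sketch above. First I would record that the jump projections $e_\lambda:=p_\lambda-p_{\lambda-0}$, $\lambda\in(0,1]$, are pairwise orthogonal (since $\mu<\lambda$ gives $p_\mu\le p_{\lambda-0}$) and, by~\eqref{rinb}, each of rank at most one; hence only countably many are nonzero. For each $\lambda$ with $e_\lambda\ne0$, the algebra $e_\lambda B(H)e_\lambda$ is one--dimensional, so $e_\lambda Te_\lambda=c_\lambda e_\lambda$ for a scalar $c_\lambda$ with $|c_\lambda|\le\|T\|_\infty$. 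I then set
\[
N:=\sum_{\lambda}c_\lambda e_\lambda,\qquad Q:=T-N,
\]
the series converging in the strong operator topology to a bounded operator with $\|N\|_\infty\le\|T\|_\infty$ (a standard block--diagonal estimate on the partial sums); being an orthogonal sum of scalar multiples of mutually orthogonal rank--one projections, $N$ is normal.

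Next I would check that $N$ commutes with every $p_\lambda$. For a jump point $\mu$: if $\mu\le\lambda$ then $e_\mu\le p_\mu\le p_\lambda$, while if $\mu>\lambda$ then $p_\lambda\le p_{\mu-0}$, so $e_\mu\le 1-p_{\mu-0}\le 1-p_\lambda$; in either case $e_\mu$ commutes with $p_\lambda$, hence so does $N$. Combining this with property~\eqref{rina}, $Tp_\lambda=p_\lambda Tp_\lambda$, gives $Qp_\lambda=p_\lambda Tp_\lambda-p_\lambda Np_\lambda=p_\lambda Qp_\lambda$, so the family $(p_\lambda)$ still satisfies~\eqref{rina} with $T$ replaced by $Q$ (while~\eqref{rinb} refers only to the family itself).

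The crucial step is to verify the "strict triangularity" hypothesis of the last assertion of Theorem~\ref{ringrose} for $Q$, namely $Qp_\lambda=p_{\lambda-0}Qp_\lambda$ for all $\lambda\in(0,1]$, equivalently $e_\lambda Qp_\lambda=0$. Here I would first note that $p_{\lambda-0}$, being the strong limit of the increasing net of $T$--invariant projections $(p_\mu)_{\mu<\lambda}$, is itself $T$--invariant, $Tp_{\lambda-0}=p_{\lambda-0}Tp_{\lambda-0}$. Writing $p_\lambda=p_{\lambda-0}+e_\lambda$ and using $e_\lambda p_{\lambda-0}=0$ then gives $e_\lambda Tp_\lambda=e_\lambda Tp_{\lambda-0}+e_\lambda Te_\lambda=e_\lambda Te_\lambda=c_\lambda e_\lambda$. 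On the other hand, since $N$ commutes with both $p_\lambda$ and $e_\lambda$, one has $e_\lambda Np_\lambda=Ne_\lambda=c_\lambda e_\lambda$. Subtracting yields $e_\lambda Qp_\lambda=0$. Theorem~\ref{ringrose} now forces $Q$ to be quasinilpotent, and $T=N+Q$ is the desired decomposition.

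I expect the only mildly delicate points to be the strong--operator convergence and boundedness of the series defining $N$, and the $T$--invariance of the limit projections $p_{\lambda-0}$; the algebraic cancellation identifying $e_\lambda Tp_\lambda$ with the diagonal entry $c_\lambda e_\lambda$ is exactly where the rank--one hypothesis~\eqref{rinb} enters, and it is what makes the corollary "immediate."
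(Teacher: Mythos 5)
Your proposal is correct and is essentially the paper's own proof: the paper also sets $N=\sum_\lambda (p_\lambda-p_{\lambda-0})T(p_\lambda-p_{\lambda-0})$, $Q=T-N$, and invokes the final assertion of Theorem~\ref{ringrose}. You have merely supplied the routine verifications (invariance of $p_{\lambda-0}$, commutation of $N$ with the $p_\lambda$, and the cancellation giving $Qp_\lambda=p_{\lambda-0}Qp_\lambda$) that the paper leaves as ``clear.''
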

\begin{proof} Indeed, set
$$N=\sum_{p_{\lambda}\neq p_{\lambda-0}}(p_{\lambda}-p_{\lambda-0})T(p_{\lambda}-p_{\lambda-0}),\quad Q=T-N,$$
where the sum (of pairwise orthogonal $1-$dimensional operators)  converges in strong operator topology.
It is clear that $N$ is normal and that $Qp_{\lambda}=p_{\lambda-0}Qp_{\lambda}.$ In particular, it follows from Theorem \ref{ringrose} that $Q$ is quasinilpotent.
\end{proof}

It is quite natural to ask whether the latter decomposition remains true in other settings.
In this paper, we concentrate on II$_1$--factors and, more generally, diffuse, finite von Neumann algebras
(see \S\ref{subsec:II1}).

The following result due to Haagerup and Schultz
\cite{HS2} is of utmost importance for our investigation.
(See \S\ref{subsec:FK} below for description of the Brown measure.)

\begin{thm}\label{thm:HS} Let $\mathcal{M}\subseteq B(H)$ be a II$_1$--factor with tracial state $\tau$
and let $T\in\mathcal{M}.$ For every Borel set $\mathcal{B}\subset\mathbb{C},$ there exists a unique projection $p_{\mathcal{B}}\in\mathcal{M}$ such that
\begin{enumerate}[{\rm (a)}]
\item\label{hsa} $\tau(p_{\mathcal{B}})=\nu_T(\mathcal{B})$, where $\nu_T$ is the Brown measure of $T$
\item\label{hsb} $Tp_{\mathcal{B}}=p_{\mathcal{B}}Tp_{\mathcal{B}}$
\item\label{hsc} if $p_{\mathcal{B}}\ne0$, then the Brown measure of $Tp_{\mathcal{B}}$ considered as an element of $p_{\mathcal{B}}\mathcal{M}p_{\mathcal{B}}$ is supported in $\mathcal{B}$
\item\label{hsd} if $p_{\mathcal{B}}\ne1,$ then the Brown measure of $(1-p_{\mathcal{B}})T$ considered as an element of $(1-p_{\mathcal{B}})\mathcal{M}(1-p_{\mathcal{B}})$ is supported in $\mathbb{C}\backslash\mathcal{B}.$
\end{enumerate}
Moreover, $p_{\mathcal{B}}$ is $T$-hyperinvariant, meaning that it is invariant under every $S\in B(H)$ that commutes with $T$.
If Borel sets $\mathcal{B}_1,\mathcal{B}_2\subset\mathbb{C}$ are such that $\mathcal{B}_1\subset\mathcal{B}_2,$ then $p_{\mathcal{B}_1}\leq p_{\mathcal{B}_2}.$
\end{thm}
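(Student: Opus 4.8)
The natural approach is to follow the construction of Haagerup and Schultz \cite{HS2}, which I sketch here. First I would reduce to the case of closed disks centred at the origin: a translated disk $\overline{D(\lambda_0,r)}$ is handled by applying the result to $T-\lambda_0$, since the Brown measure is translation covariant, and an arbitrary Borel set will be reached from disks and their complements by the lattice operations $\wedge,\vee$ and monotone limits in the usual measure-theoretic fashion. So fix $r>0$ and aim to build $p:=p_{\overline{D(0,r)}}$.

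This $p$ is to be read off from the asymptotics of the powers of $T$. I would examine the positive operators $|T^n|^{1/n}=\big((T^*)^nT^n\big)^{1/2n}$; by multiplicativity of the Fuglede--Kadison determinant they have the \emph{fixed} value $\Delta(|T^n|^{1/n})=\Delta(T^n)^{1/n}=\Delta(T)$, and the technical core (following Haagerup--Larsen and Haagerup--Schultz) is to show that the spectral distributions of $|T^n|^{1/n}$ converge, as $n\to\infty$, to the push-forward $(\lambda\mapsto|\lambda|)_*\nu_T$, and that the operators themselves converge in the strong resolvent sense to a positive self-adjoint $B$ affiliated with $\mathcal M$. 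One then puts $p=\mathbf 1_{[0,r]}(B)$. Property~(a) is the identification of the distribution of $B$ on $[0,r]$ with $(\lambda\mapsto|\lambda|)_*\nu_T$. For~(b), the range of $p$ is (in essence) the closed $T$-invariant subspace generated by the vectors $\xi$ with $\limsup_n\|T^n\xi\|^{1/n}\le r$, so $Tp=pTp$; and the same description gives $T$-hyperinvariance immediately, since any $S\in B(H)$ with $ST=TS$ satisfies $\|T^nS\xi\|=\|ST^n\xi\|\le\|S\|\,\|T^n\xi\|$ and so preserves that subspace.

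Properties~(c) and~(d) I would obtain by writing $T$ in block upper-triangular form with respect to $p$ --- corners $pTp$, $pT(1-p)$, $(1-p)T(1-p)$, the lower-left corner $(1-p)Tp$ being zero by~(b) --- and computing the Fuglede--Kadison determinants of $Tp-\lambda$ and $(1-p)T-\lambda$ via multiplicativity of $\Delta$; this also yields the additivity $\nu_T=\tau(p)\,\nu_{pTp}+\tau(1-p)\,\nu_{(1-p)T(1-p)}$, from which (a) is in fact a consequence of (b)--(d). To pass to a general Borel set $\mathcal B$, I would first record monotonicity, $\mathcal B_1\subseteq\mathcal B_2\Rightarrow p_{\mathcal B_1}\le p_{\mathcal B_2}$ --- immediate from functional calculus for concentric disks, and built into the construction thereafter --- then define $p_{\mathcal B}$ for finite unions of disks by a lattice formula, for closed $\mathcal B$ as a decreasing limit of the projections for open neighbourhoods, for open $\mathcal B$ as an increasing limit over compact subsets, and for general Borel $\mathcal B$ by a monotone-class argument; properties (a)--(d), monotonicity and hyperinvariance all survive the monotone limits of projections involved. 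Uniqueness I would argue directly: if $q$ also satisfies (b)--(d) for $\mathcal B$, then $p\wedge q$ and $p\vee q$ are again $T$-invariant, and the Brown-measure support conditions on the corner compressions --- via the additivity just mentioned --- rule out $p\vee q-p\wedge q\ne0$, so $p=q$.

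The main obstacle is exactly the disk case: proving that $\big((T^*)^nT^n\big)^{1/2n}$ converges in the strong resolvent sense and that the spectral scale of the limit is governed by $\nu_T$. This rests on the delicate estimates of Haagerup--Schultz --- a Fuglede--Kadison determinant lower bound, a concavity argument controlling $\tau\big(\log^+\tfrac1r|T^n|^{1/n}\big)$, and the potential-theoretic fact that $\lambda\mapsto\log\Delta(T-\lambda)$ is subharmonic with Riesz measure $\nu_T$. Once the disk case is settled, the measure-theoretic bookkeeping for general $\mathcal B$, the uniqueness argument, and hyperinvariance are comparatively routine.
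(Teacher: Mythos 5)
This theorem is quoted in the paper from Haagerup and Schultz \cite{HS2} without proof, so there is no internal argument to compare against; your outline correctly tracks the construction actually used there and echoed in the paper's preliminaries, namely that $p_{\{|z|\le r\}}$ is the spectral projection $\mathbf{1}_{[0,r]}$ of the strong-operator limit of $\big((T^*)^nT^n\big)^{1/2n}$ (which is a bounded element of $\mathcal{M}$, not merely an affiliated operator), equivalently the projection onto the subspace $H_r$ of vectors approximable by $\xi_n$ with $\limsup_n\|T^n\xi_n\|^{1/n}\le r$, with general Borel sets then handled by lattice and monotone-limit arguments. The genuinely hard steps --- existence of that limit, identification of its spectral scale with the radial distribution of $\nu_T$, and the verification that properties (c)--(d) survive the passage from disks to arbitrary Borel sets --- are exactly the ones you defer to the Haagerup--Schultz estimates, which is an accurate assessment of where the difficulty lies.
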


This projection $p_{\mathcal{B}}$ is called the Haagerup--Schultz projection for the operator $T$ associated to the set $\mathcal{B}$.

\begin{rem}
In the above theorem, the hypothesis that $\mathcal{M}$ be a II$_1$--factor and $\tau$ its tracial state
can clearly be loosened to require only that
$\mathcal{M}$ be a diffuse, finite von Neumann algebra and $\tau$ be any normal, faithful, tracial state on it.
This is because (a) any such $\mathcal{M}$ can be embedded, via a normal, trace-preserving $*$-homomorphism, into a II$_1$--factor
(to see this, use for example Lemma~2.3 and Theorem~2.5 from~\cite{D94} to see that the free product
with respect to traces of $\mathcal{M}$ with $L^\infty[0,1]$ is a II$_1$-factor)
and (b) the projection onto a hyperinvariant subspace of any operator belongs to the von Neumann algebra generated by the operator
(this is easy to show, but for a proof see, for example, Proposition~ 2.1 of~\cite{D05}).
\end{rem}

The purpose of this paper is to use the Haagerup--Schultz theorem above to obtain the following
finite von Neumann algebra version of the Ringrose result.
We note that the quasinilpotent operator in Ringrose's theorem is here replaced by an s.o.t.-quasinilpotent operator, which is
an operator $Q$ such that $((Q^*)^nQ^n)^{1/2n}$ converges in strong operator topology to $0$;
in this notation (which was introduced in~\cite{DFS}) s.o.t.\ is an abbreviation for ``strong operator topology;''
that quasinilpotent operators must be s.o.t.--quasinilpotent follows from the characterization~\eqref{eq:Q*nQn}
of quasinilpotency.
By Theorem~8.1 of~\cite{HS2}, in a finite von Neumann algebra the s.o.t.-quasinilpotent operators are precisely those whose
Brown measures are concentrated at $\{0\}$.
\begin{thm}\label{decomposition} Let $\mathcal{M}$ be a diffuse, finite von Neumann algebra with
normal, faithful, tracial state $\tau$ and let $T\in\mathcal{M}$.
Then there exist $N,Q\in\mathcal{M}$ such that
\begin{enumerate}[{\rm (a)}]
\item\label{deca} $T=N+Q$,
\item\label{decb} the operator $N$ is normal and the Brown measure of $N$ equals that of $T$,
\item\label{decc} the operator $Q$ is s.o.t-quasinilpotent.
\end{enumerate}
\end{thm}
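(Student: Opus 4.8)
The plan is to build, by transfinite recursion on a suitable chain of Haagerup--Schultz projections, an increasing family of projections that ``localizes'' the Brown measure of $T$ on smaller and smaller sets, extract the ``diagonal'' of $T$ with respect to this family as the normal operator $N$, and show the complementary part $Q = T - N$ has Brown measure concentrated at $\{0\}$, so that it is s.o.t.-quasinilpotent by Theorem~8.1 of~\cite{HS2}. First I would fix a sequence of finite Borel partitions $\mathcal{P}_1 \preceq \mathcal{P}_2 \preceq \cdots$ of $\mathbb{C}$ refining each other, with mesh of $\mathcal{P}_k$ tending to $0$, say by dyadic grids of side $2^{-k}$ (intersected with a large disc containing the support of $\nu_T$). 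For each cell $\mathcal{B}$ of $\mathcal{P}_k$ there is a Haagerup--Schultz projection $p_{\mathcal{B}}$; I would instead need the projection onto the ``interval'' of the partition, i.e. for a linearly ordered enumeration of the cells I would form the cumulative sums $q^{(k)}_j = p_{\mathcal{B}_1 \cup \cdots \cup \mathcal{B}_j}$, which by the last assertion of Theorem~\ref{thm:HS} form an increasing chain of $T$-invariant projections with $T q^{(k)}_j = q^{(k)}_j T q^{(k)}_j$, and with $\tau(q^{(k)}_j - q^{(k)}_{j-1}) = \nu_T(\mathcal{B}_j)$. Refining from $\mathcal{P}_k$ to $\mathcal{P}_{k+1}$ subdivides each difference projection further, so the chains are nested.

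Next I would define $N_k = \sum_j (q^{(k)}_j - q^{(k)}_{j-1})\, T\, (q^{(k)}_j - q^{(k)}_{j-1})$ and $Q_k = T - N_k$. The operator $N_k$ is block-diagonal with respect to the chain; the key point is that as $k \to \infty$ the sequence $N_k$ should converge (in an appropriate sense — at least in $\|\cdot\|_2$, and boundedly, hence in s.o.t.) to a limit $N$. Convergence in $\|\cdot\|_2$ I would get from the fact that the $N_k$ form a martingale: $N_{k+1}$ is obtained from $N_k$ by keeping the part of $T$ supported on the finer diagonal blocks, and the conditional-expectation-like structure (compression to the diagonal of a finer subalgebra-like family of projections) makes $\tau(N_{k+1}^* N_{k+1}) \le \tau(T^* T)$ monotone; an $L^2$-bounded martingale converges in $L^2$. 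One then shows $\|N_k\|_\infty$ is bounded, for instance because each diagonal block of $T$ has norm at most $\|T\|_\infty$ and the blocks are orthogonal, so $N_k$ converges in s.o.t.\ to a normal $N \in \mathcal{M}$, and $Q := T - N = \lim Q_k$ in s.o.t.

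It then remains to identify the Brown measures. For $N$: since $N$ is the s.o.t.-limit of the block-diagonal $N_k$, and the $j$-th block of $N_k$ is the compression $(q^{(k)}_j - q^{(k)}_{j-1}) T (q^{(k)}_j - q^{(k)}_{j-1})$ which by parts~\eqref{hsc} and~\eqref{hsd} of Theorem~\ref{thm:HS} has Brown measure supported in the (closure of the) cell $\mathcal{B}_j$ of $\mathcal{P}_k$, I would argue that the Brown measure of $N_k$ is supported within the $2^{-k}$-neighborhood of $\operatorname{supp}\nu_T$ and, more precisely, that $\nu_{N_k}$ converges weak-$*$ to $\nu_T$ (using that the mass $\tau$ of each block matches $\nu_T$ of the cell, and that cells shrink); combined with upper semicontinuity of the Brown measure along a bounded s.o.t.-convergent sequence one concludes $\nu_N = \nu_T$ and, since $N$ is normal, that $\nu_N$ is just the spectral distribution of $N$. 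For $Q$: here I would use that $Q = T - N$ and the defining invariance relations. This is the step I expect to be the main obstacle. The clean way is to show directly that $\nu_Q = \delta_0$: one shows that for the s.o.t.-quasinilpotent criterion it suffices to exhibit, for each $\varepsilon$, a single $T$-invariant chain such that each diagonal block of $Q = T-N$ has Brown measure of small diameter around a point, and then let the point be forced to $0$ by using that $Q$ is ``strictly upper triangular'' with respect to the chain in the sense that $Q (q^{(k)}_j - q^{(k)}_{j-1}) = q^{(k)}_{j-1} Q (q^{(k)}_j - q^{(k)}_{j-1}) + (\text{a diagonal piece of size} \le \operatorname{mesh}(\mathcal{P}_k))$. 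Passing to the limit, $Q$ becomes genuinely ``strictly upper triangular'' relative to a maximal chain, its diagonal blocks vanish, and an argument in the spirit of the last sentence of Theorem~\ref{ringrose} — or more robustly, Haagerup--Schultz's own characterization that an operator that is upper triangular with zero diagonal with respect to such a chain has Brown measure $\delta_0$ — gives that $Q$ is s.o.t.-quasinilpotent. Making the interchange of the two limiting processes (refining the partition, and the martingale/s.o.t.\ limit) rigorous, and controlling the off-diagonal ``error'' terms uniformly, is the technical heart of the argument.
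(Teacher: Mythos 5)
There is a genuine gap, and it occurs at the first substantive step: your candidate normal part is the wrong operator. The block compressions $N_k=\sum_j f^{(k)}_j T f^{(k)}_j$, where $f^{(k)}_j=q^{(k)}_j-q^{(k)}_{j-1}$, are exactly the conditional expectations of $T$ onto the relative commutants $\mathcal{M}\cap\{q^{(k)}_j : j\}'$, and their s.o.t.\ limit is the expectation of $T$ onto the commutant of the whole nest, which is not normal in general. Shrinking the cells does not force normality of the blocks: a corner $fTf$ with Brown measure supported in a tiny cell around $\lambda$ can still equal $\lambda f$ plus an s.o.t.-quasinilpotent operator of operator norm (and $\|\cdot\|_2$-norm) comparable to $\|T\|_\infty$, so $N_k$ stays far from any normal operator. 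The extreme case makes this stark: if $T$ is a nonzero nilpotent element of $\mathcal{M}$ (e.g.\ a matrix unit $e_{12}$ in a copy of $M_2(\mathbb{C})$ inside a II$_1$--factor), then $\nu_T=\delta_0$, every Haagerup--Schultz projection is $0$ or $1$, hence every $q^{(k)}_j\in\{0,1\}$ and $N_k=T$ for all $k$; your $N$ equals $T$ and is not normal. The correct normal part replaces each block by its normalized trace, $N=\lim_k\sum_j \tau(f^{(k)}_jTf^{(k)}_j)\tau(f^{(k)}_j)^{-1}f^{(k)}_j$, i.e.\ the expectation onto the \emph{abelian} algebra generated by the nest (giving $N=\tau(T)1=0$ in the example). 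With that definition the blockwise means lie in $\overline{\mathrm{conv}}$ of the corresponding cells, which makes the approximants Cauchy in operator norm and lets one prove $\nu_N=\nu_T$ by dominated convergence of the regularized determinants $\log\Delta(|\cdot-\lambda|^2+\varepsilon)$; the ``upper semicontinuity of Brown measure along bounded s.o.t.-convergent sequences'' you invoke is not a usable principle here, as Brown measure is badly discontinuous even under norm convergence.

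The second gap is the quasinilpotency of $Q$. The statement you hope to quote --- that an operator which is upper triangular with zero diagonal relative to a maximal nest has Brown measure $\delta_0$ --- is not available off the shelf; it is essentially the main technical theorem to be proved, and it does not even apply verbatim to $Q=T-N$, because the ``diagonal'' of $Q$ relative to the commutant of the nest is $\mathrm{Exp}_{\mathcal{D}'}(T)-\mathrm{Exp}_{\mathcal{D}}(T)$, which is generally nonzero (its blocks have small Brown-measure support, not small norm, so they do not vanish in the limit). What is actually needed is, first, a determinant identity $\Delta(S)=\Delta(\mathrm{Exp}_{\mathcal{D}'}(S))$ for every $S$ upper triangular with respect to a right-continuous nest --- proved via the logarithmic submajorization $Sp+(1-p)S\prec\prec_{\log}S$ for $S$-invariant $p$ and an interchange of limits for $\Delta(|\cdot|^2+1/m)$ --- applied to $S=T-N-\lambda$ for all $\lambda$; and second, a separate direct argument, using the Haagerup--Schultz subspace description of $H_r$, that the nest-commuting operator $\mathrm{Exp}_{\mathcal{D}'}(T)-\mathrm{Exp}_{\mathcal{D}}(T)$ is s.o.t.-quasinilpotent. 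Note also that one cannot shortcut this by applying the identity $\nu_N=\nu_T$ to the operator $T-N$: the nest of Haagerup--Schultz projections of $T-N$ is not the nest built from $T$ (in the nilpotent example it is trivial), so the two limiting procedures do not commute for free.
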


As in Ringrose's theorem, a family $(q_t)_{0\le t\le 1}$ of $T$--invariant (in fact, $T$--hyper\-in\-vari\-ant) projections
is involved in Theorem~\ref{decomposition}, and $N$ and $Q$ can be regarded as diagonal and upper triangular, respectively,
with respect to this family of projections.
Indeed, $N$ is obtained as the conditional expectation of $T$ onto the von Neumann algebra generated by $\{q_t\mid 0\le t\le 1\}$.
However, unlike in Ringrose's theorem, the dimensions (i.e., in our finite von Neumann algebra setting, the traces)
of the projections $q_t$ can take large jumps as $t$ varies.
Indeed, if $T$ itself is s.o.t.-quasinilpotent, then our construction yields $N=0$ and $\{q_t\mid 0\le t\le 1\}\subseteq\{0,1\}$.

\medskip

For compact operators in $B(H),$ it was shown in \cite{Ringrose} that in Corollary~\ref{cor:ringrose},
the eigenvalues of the operator $N$ coincide with those of the operator $T$ (counting the algebraic multiplicities). The analogous result for the operators in diffuse, finite von Neumann algebras is given by Theorem \ref{decomposition} \eqref{decb}. 

The eigenvalues $\lambda(k,T),$ $k\geq0,$ and singular values $\mu(k,T),$ $k\geq0,$ of a compact operator $T\in B(H)$ are related by means of the Weyl theorem \cite{Weyl}. See \cite{GK1} for detailed proof.

\begin{thm}\label{weyl orig} Let $T\in B(H)$ be a compact operator and let $\Phi$ be a real--valued increasing function on $[0,\infty)$
such that $\Phi(0)=0$ and $\Phi\circ\exp$ is convex. Then
$$\sum_{k=0}^{\infty}\Phi(|\lambda(k,T)|)\leq\sum_{k=0}^{\infty}\Phi(\mu(k,T)).$$
\end{thm}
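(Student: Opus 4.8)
The plan is to deduce Theorem~\ref{weyl orig} from two classical ingredients: Weyl's \emph{multiplicative} inequality comparing partial products of the moduli of the eigenvalues with partial products of the singular values, and the Hardy--Littlewood--P\'olya principle that a weak majorization between nonincreasing sequences is preserved by increasing convex functions. Throughout I list the eigenvalues $\lambda(k,T)$ with nonincreasing modulus, each repeated according to its algebraic multiplicity, and the singular values $\mu(k,T)$ nonincreasingly. If $\sum_k\Phi(\mu(k,T))=\infty$ there is nothing to prove, so we may assume this sum finite; in particular $\mu(k,T)\to0$.

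\textbf{Step 1 (multiplicative Weyl inequality).} I would first show that, for every $n\ge1$,
\[
\prod_{k=0}^{n-1}|\lambda(k,T)|\ \le\ \prod_{k=0}^{n-1}\mu(k,T).
\]
By the Riesz spectral theory of compact operators (one may instead invoke the triangularization of Theorem~\ref{ringrose}), for each $n$ there is an $n$-dimensional $T$-invariant subspace $M\subseteq H$ such that the eigenvalues of $T|_M$, counted with multiplicity, are precisely $\lambda(0,T),\dots,\lambda(n-1,T)$; hence $|\det(T|_M)|=\prod_{k=0}^{n-1}|\lambda(k,T)|$. Passing to $n$-th exterior powers, $\wedge^nM$ is a one-dimensional subspace of $\wedge^nH$ that is invariant under $\wedge^nT$ and on which $\wedge^nT$ acts as multiplication by $\det(T|_M)$, so $|\det(T|_M)|\le\|\wedge^nT\|_\infty$. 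Finally, from $|\wedge^nT|=\wedge^n|T|$ and the diagonalization of $|T|$ one sees that the singular values of $\wedge^nT$ are the products $\mu(k_1,T)\cdots\mu(k_n,T)$ over $k_1<\dots<k_n$, whence $\|\wedge^nT\|_\infty=\prod_{k=0}^{n-1}\mu(k,T)$. Chaining the three displayed bounds gives the claim.

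\textbf{Step 2 (from multiplicative to additive).} Set $a_k=|\lambda(k,T)|$, $b_k=\mu(k,T)$, and pass to logarithms (with the convention $\log0=-\infty$). The sequences $(\log a_k)$ and $(\log b_k)$ are nonincreasing in $[-\infty,\infty)$, and by Step~1 we have $\sum_{k=0}^{n-1}\log a_k\le\sum_{k=0}^{n-1}\log b_k$ for every $n$; that is, $(\log b_k)$ weakly majorizes $(\log a_k)$. The function $g:=\Phi\circ\exp$ is, by hypothesis, increasing and convex, with $g(-\infty)=\Phi(0)=0$ (so that a vanishing eigenvalue contributes $0$, consistently with the convention $\Phi(0)=0$). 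The Hardy--Littlewood--P\'olya / Ky Fan majorization inequality for increasing convex functions (see, e.g., \cite{GK1}) therefore gives $\sum_k g(\log a_k)\le\sum_k g(\log b_k)$, i.e.
\[
\sum_{k=0}^{\infty}\Phi(|\lambda(k,T)|)\ \le\ \sum_{k=0}^{\infty}\Phi(\mu(k,T)),
\]
which is the desired inequality. The only subtlety for infinite sequences is convergence of the sums, which is harmless here: all terms are nonnegative, the right-hand side is finite by assumption, and $b_k\to0$, so the telescoping argument underlying the majorization inequality goes through unchanged.

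\textbf{Main obstacle.} The substance is entirely in Step~1, and within it the two nontrivial points are: the existence, for each $n$, of an $n$-dimensional $T$-invariant subspace carrying exactly the $n$ eigenvalues of largest modulus (with algebraic multiplicities)---this rests on the Riesz decomposition theory and needs a little care when several eigenvalues of equal modulus straddle the index $n$, in which case one restricts $T$ to a suitable invariant subspace of a generalized eigenspace; and the identity $\|\wedge^nT\|_\infty=\prod_{k=0}^{n-1}\mu(k,T)$, obtained from $|\wedge^nT|=\wedge^n|T|$. Granting the multiplicative inequality, the passage to the additive form in Step~2 is classical and essentially formal.
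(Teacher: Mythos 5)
This theorem is classical and the paper does not prove it, deferring instead to \cite{GK1}; your argument---the multiplicative Weyl inequality $\lambda(T)\prec\prec_{\log}\mu(T)$ established via exterior powers, followed by the Hardy--Littlewood--P\'olya principle for increasing convex functions---is exactly the standard proof the paper cites (Theorem II.3.1 and Lemma II.3.4 of \cite{GK1}, both of which the paper itself invokes elsewhere). The proposal is correct, including the handling of vanishing eigenvalues via $g(-\infty)=\Phi(0)=0$ and the truncation argument for the infinite sums.
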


We will also prove the following theorem, which extends Weyl's result to the II$_1$--setting:
\begin{thm}\label{thm:WeylII1} Let $\mathcal{M}$ be 
a diffuse finite von Neumann algebra with
normal, faithful, tracial state $\tau$
and let $T\in\mathcal{M}.$ For the normal operator $N$ from Theorem \ref{decomposition} and for every 
real-valued increasing function $\Phi$ on $[0,\infty)$ such that $\Phi\circ\exp$ is convex, we have
$$ \int\Phi(|z|)\,d\mu_T(z)=\tau(\Phi(|N|))\leq\tau(\Phi(|T|)).$$ 
\end{thm}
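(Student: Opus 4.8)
The plan is to deduce Theorem~\ref{thm:WeylII1} from Theorem~\ref{decomposition} together with a majorization argument. First I would observe that the equality $\int \Phi(|z|)\,d\mu_T(z) = \tau(\Phi(|N|))$ is essentially automatic: by Theorem~\ref{decomposition}\eqref{decb} the Brown measure $\mu_N$ of $N$ equals $\mu_T$, and since $N$ is normal its Brown measure coincides with the spectral distribution of $N$, so $|N|$ has distribution equal to the pushforward of $\mu_N=\mu_T$ under $z\mapsto|z|$; applying Borel functional calculus to the normal operator $N$ and taking the trace gives $\tau(\Phi(|N|)) = \int\Phi(|z|)\,d\mu_T(z)$. (One should note $\Phi$ is increasing with $\Phi\circ\exp$ convex, hence $\Phi$ is continuous on $(0,\infty)$ and monotone, so these functional calculus manipulations and the integral make sense, possibly after checking integrability, which follows from $|N|\in\mathcal{M}$ being bounded.) So the real content is the inequality $\tau(\Phi(|N|)) \le \tau(\Phi(|T|))$.

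For the inequality, the key step is to establish a submajorization relation between the singular value functions (generalized $s$-numbers) of $N$ and of $T$, i.e.\ that $\mu(\cdot,N) \prec\prec \mu(\cdot,T)$ in the sense of Hardy--Littlewood--Pólya for functions on $[0,1]$, equivalently $\int_0^s \mu(t,N)\,dt \le \int_0^s \mu(t,T)\,dt$ for all $s\in[0,1]$. Given such a submajorization, the conclusion $\tau(\Phi(|N|)) \le \tau(\Phi(|T|))$ follows from the standard fact that $\tau(\Phi(|X|)) = \int_0^1 \Phi(\mu(t,X))\,dt$ together with the property that integration against an increasing function whose composition with $\exp$ is convex preserves submajorization; here one reduces to the case of convex increasing $\Phi$ by a change of variables (the hypothesis that $\Phi\circ\exp$ be convex is exactly what is needed to handle $\Phi$ that need not itself be convex, by writing $\mu(t,X) = \exp(\log\mu(t,X))$ and noting that $\log\mu(\cdot,N)$ is submajorized by $\log\mu(\cdot,T)$ — this last ``logarithmic submajorization'' is the natural statement to prove).

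So I would actually aim to prove the logarithmic submajorization $\int_0^s \log\mu(t,N)\,dt \le \int_0^s \log\mu(t,T)\,dt$. The mechanism is Theorem~\ref{decomposition} and its construction: one has an increasing family $(q_t)_{0\le t\le1}$ of $T$-hyperinvariant (Haagerup--Schultz) projections with $\tau(q_t)=t$, $N$ is the conditional expectation of $T$ onto the von Neumann algebra they generate, and $Q = T-N$ is s.o.t.-quasinilpotent. The Fuglede--Kadison determinant $\Delta$ is multiplicative and relates to $\log\mu$ via $\log\Delta(X) = \int_0^1 \log\mu(t,X)\,dt$; more refined, the Haagerup--Schultz theory gives that $\int_0^s\log\mu(t,T)\,dt$ can be computed from the Brown measure / the projections $q_s$, namely $\log\Delta(q_s T q_s \text{ in } q_s\mathcal{M}q_s)$ relates to the part of the Brown measure ``inside $q_s$''. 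The point is that $q_s N q_s = q_s T q_s$ modulo the strictly-upper-triangular part, which is s.o.t.-quasinilpotent and hence has Fuglede--Kadison determinant $1$ (Brown measure at $0$); combined with the multiplicativity of $\Delta$ and the fact that $N$ restricted to $q_s\mathcal{M}q_s$ is normal with spectral distribution the truncation of $\mu_T$, this should yield the equality of the ``leading'' determinants $\Delta(q_s N q_s) = \Delta(q_s T q_s)$ for each $s$, and then the submajorization for $\mu(\cdot,N)$ versus $\mu(\cdot,T)$ by the standard characterization of submajorization of singular values in terms of such compressions (in the II$_1$ setting this is the analogue of the fact that $\prod_{k<m}\mu(k,A) = \sup_{\mathrm{rank}\,p=m}\Delta(pAp)$). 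I expect the main obstacle to be precisely this step: relating $\int_0^s\log\mu(t,T)\,dt$ to the Haagerup--Schultz projection $q_s$ of $T$ (and doing the same for $N$) with enough care that the s.o.t.-quasinilpotent correction $Q$ genuinely drops out — this requires combining the determinant theory of~\cite{HS2} with the variational description of singular value integrals in finite von Neumann algebras, and checking that the hyperinvariance of $q_s$ makes $q_s N q_s$ and $q_s T q_s$ agree up to a quasinilpotent perturbation in $q_s\mathcal{M}q_s$.
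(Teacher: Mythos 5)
Your overall strategy --- establish $N\prec\prec_{\log}T$ and then convert logarithmic submajorization into the trace inequality for $\Phi$ --- is exactly the paper's strategy, and your treatment of the equality $\int\Phi(|z|)\,d\mu_T=\tau(\Phi(|N|))$ (Brown measure of $N$ equals $\nu_T$, $N$ normal) is correct. The gap is in the step you yourself flag as the main obstacle: your proposed mechanism for proving $N\prec\prec_{\log}T$ does not work as described. First, the projections $q_t$ from Construction \ref{peano} are indexed along a Peano curve, so they are not ordered by $|z|$ and do not satisfy $\tau(q_t)=t$ (their traces are $\nu_T(\rho([0,t]))$ and can jump); consequently the compressions $q_sTq_s$ do not isolate the largest singular values or the outer part of the Brown measure, and no variational formula of the type $\sup_{\tau(p)=s}\Delta(pAp)$ applied to these particular projections can produce $\int_0^s\log\mu(t,T)\,dt$. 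Second, your claim that the s.o.t.-quasinilpotent correction ``has Fuglede--Kadison determinant $1$'' is false: Brown measure $\delta_0$ gives $\log\Delta(Q)=\int\log|z|\,d\delta_0=-\infty$, i.e.\ $\Delta(Q)=0$; and in any case $\Delta$ is multiplicative, not additive, so knowing the determinant of the perturbation tells you nothing about the determinant of a sum. What actually closes the argument in the paper is an external input you have not identified: Lemma~2.20 of \cite{HS1}, the Weyl-type inequality
\[
\int_{|z|\geq t}\log\Bigl(\frac{|z|}{t}\Bigr)\,d\nu_T(z)\;\leq\;\tau\Bigl(\log_+\Bigl(\frac{|T|}{t}\Bigr)\Bigr)
\]
relating the Brown measure of $T$ directly to its singular values. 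Combined with $\nu_N=\nu_T$, normality of $N$, and the characterization of logarithmic submajorization via $t\mapsto\tau(\log_+(|\cdot|/t))$ (Lemma \ref{logmaj another}), this gives $N\prec\prec_{\log}T$ in two lines; the decomposition $T=N+Q$ itself plays no further role in this step.

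A secondary, smaller issue: your passage from $\log\mu(\cdot,N)\prec\prec\log\mu(\cdot,T)$ to $\tau(\Phi(|N|))\le\tau(\Phi(|T|))$ by ``writing $\mu=\exp(\log\mu)$'' glosses over the possibility that $\mu(t,N)=0$ on a set of positive measure, where $\log\mu=-\infty$ and the Hardy--Littlewood--P\'olya theorem does not apply directly. The paper handles this by first proving that $B\prec\prec_{\log}A$ implies $n B+1\prec\prec_{\log}n A+1$ (Lemma \ref{stamp}), applying Theorem \ref{hlp theorem} to $\log(n|N|+1)\prec\prec\log(n|T|+1)$ with the convex function $z\mapsto\Phi(\tfrac1n e^{z})$, and then letting $n\to\infty$ by monotone convergence. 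Some regularization of this kind is genuinely needed and should appear in a complete proof.
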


\section{Preliminaries}

\subsection{Finite von Neumann algebras and II$_1$--factors}
\label{subsec:II1}

A von Neumann algebra $\mathcal{M}$ is called
{\em finite} if it has a normal, faithful, tracial state $\tau$.
It is called {\em diffuse} if it has no minimal (nonzero) projections and it is called 
a {\em factor} if its center is trivial.
The infinite dimensional finite von Neumann algebra factors are diffuse and are called II$_1$--factors, and each of these
has a unique tracial state $\tau$, which is normal and faithful.
See, e.g., \cite{KR1} for details.

Throughout this paper, $\mathcal{M}$ will denote a diffuse, finite von Neumann algebra and $\tau$ will be a normal, faithful, tracial state
on $\mathcal{M}$.

\subsection{Singular value function}

For every $T\in\mathcal{M},$ the generalised singular value function $\mu(T)$, denoted $t\to\mu(t,T)$ for $t\in(0,1)$,
is defined by the formula (see, e.g., \cite{FackKosaki})
$$\mu(t,T)=\inf\{\|Tp\|_{\infty}:\ p\in\operatorname{Proj}(\mathcal{M}),\;\tau(1-p)\leq t\}.$$
It is continuous from the right in $t.$ Equivalently, $\mu(T)$ can be defined in terms of the distribution function $d_{|T|}$ of the operator $|T|.$ That is, setting
$$d_{|T|}(s)=\tau(E^{|T|}(s,\infty)),\quad s\geq0,$$
we obtain
$$\mu(t,T)=\inf\{s\geq0:\ d_{|T|}(s)\leq t\},\quad t>0.$$
Here, $E^{|T|}$ denotes the projection valued spectral measure of the operator $|T|.$

The following result is a widely known consequence of the spectral theorem.

\begin{lem}\label{boolean lemma} Let $\mathcal{M}$ be a diffuse, finite von Neumann algebra equipped with a normal, faithful, tracial state $\tau$ and let $0\leq A\in\mathcal{M}$.
Then there is an increasing net $(p_s)_{0\le s\le1}$ of projections in $\mathcal{M}$ with $\tau(p_s)=s$
with
\[
A=\int_0^1 \mu(s,A)\,dp_s.
\]
\end{lem}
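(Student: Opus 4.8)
The plan is to build the family $(p_s)$ directly from the spectral measure of $A$, reading off the ``level sets'' of the singular value function. Recall that $\mu(\,\cdot\,,A)$ is the (right-continuous) decreasing rearrangement of $A$, so it is the right-continuous inverse of the distribution function $d_{|A|}=d_A$. For each $s\in[0,1]$ I want $p_s$ to be a spectral projection of $A$ with $\tau(p_s)=s$ that ``captures the top $s$-portion of the spectrum.'' Concretely, for $s\in(0,1)$ let $c_s=\mu(s,A)$; then the spectral projection $E^A(c_s,\infty)$ has trace at most $s$ and $E^A[c_s,\infty)$ has trace at least $s$. If $\tau(E^A(c_s,\infty))=s$ exactly, set $p_s=E^A(c_s,\infty)$; otherwise $c_s$ is an atom of the distribution of $A$, and since $\mathcal{M}$ is diffuse the spectral projection $E^A\{c_s\}$ of that atom is itself diffuse, so I can split off a subprojection $r\le E^A\{c_s\}$ with $\tau(r)=s-\tau(E^A(c_s,\infty))$ and put $p_s=E^A(c_s,\infty)+r$. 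A mild care point: one must choose these ``partial atom'' subprojections coherently across all $s$ lying in a single atom-plateau of $\mu(\cdot,A)$ (i.e.\ increasingly in $s$); since $E^A\{c\}$ is diffuse, fix once and for all, for each atom $c$, an increasing family of subprojections of $E^A\{c\}$ indexed by the length of the plateau, and use it. Finally set $p_0=0$ and $p_1=1$.

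With $(p_s)$ so defined, monotonicity $s\le s'\Rightarrow p_s\le p_{s'}$ and $\tau(p_s)=s$ are immediate from the construction, because larger $s$ corresponds to a smaller threshold $c_s$ (hence a larger spectral projection $E^A(c_s,\infty)$) and, within a plateau, to a larger chosen subprojection of the atom. So the only substantive claim is the integral formula $A=\int_0^1\mu(s,A)\,dp_s$. Here $dp_s$ is the $\mathcal{M}$-valued (projection-valued) measure associated with the increasing family $(p_s)$, and the integral is the Stieltjes integral of the scalar function $s\mapsto\mu(s,A)$ against it, understood as a norm limit of Riemann--Stieltjes sums $\sum_i \mu(s_i,A)\,(p_{s_{i+1}}-p_{s_i})$. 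I would verify the formula by pushing everything to the spectral side: the map $s\mapsto p_s$ is, by construction, a reparametrization of the spectral resolution of $A$. Precisely, for any Borel set $B\subseteq(0,\infty)$ one checks $\int_B dp_s = E^A(\mu(B))$ in the appropriate sense, or more simply, that $p_s = E^A(c_s,\infty)$ up to the atom corrections, and that $\mu(\cdot,A)$ is constant (equal to the eigenvalue) on exactly the $s$-interval whose corresponding projections sweep out a given spectral subinterval of $A$. Then the Riemann--Stieltjes sum $\sum_i \mu(s_i,A)(p_{s_{i+1}}-p_{s_i})$ is literally a Riemann sum approximating $\int_0^{\infty}\lambda\,dE^A(\lambda)=A$ with respect to the spectral measure, and it converges in operator norm because $A$ is bounded and $\mu(\cdot,A)$ is bounded and monotone (hence of bounded variation), so the mesh of the partition controls the error uniformly.

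The main obstacle, and the part deserving the most care, is the bookkeeping for atoms of the spectral distribution, i.e.\ making the passage between the parameter $s\in[0,1]$ and the spectrum $\sigma(A)\subseteq[0,\infty)$ genuinely a change of variables even when $\mu(\cdot,A)$ has flat plateaus (atoms) and $A$ has gaps in its spectrum (jumps in $\mu$). Diffuseness of $\mathcal{M}$ is exactly what makes this work: every atom $E^A\{c\}$ of $A$ is a diffuse projection and so admits a continuous increasing chain of subprojections of every intermediate trace, which lets us ``linearize'' the atom across its plateau in $s$. Once that correspondence is set up cleanly, the convergence of the Riemann--Stieltjes sums to $A$ is a routine consequence of the spectral theorem for the bounded positive operator $A$, together with the right-continuity and monotonicity of $\mu(\cdot,A)$; I would state it as such rather than belabor the $\varepsilon$-estimates. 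It is also worth remarking that the family $(p_s)$ produced this way is unique only up to the choice of atom subprojections, which is harmless for all subsequent uses in the paper.
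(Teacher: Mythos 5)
Your construction is essentially identical to the paper's own sketch: take $p_s=E^A((\mu(s,A),\infty))$ at continuity points of the distribution function and, at the countably many atoms $r$ of the spectral measure, use diffuseness of $\mathcal{M}$ to fix once and for all an increasing chain of subprojections of $E^A(\{r\})$ that fills in the corresponding plateau of $\mu(\cdot,A)$. The one caveat concerns your closing claim that the Riemann--Stieltjes sums converge in operator norm with the mesh of the partition of $[0,1]$ controlling the error uniformly: if the spectrum of $A$ has a gap, then $\mu(\cdot,A)$ has a jump and any partition interval straddling that jump contributes an error bounded below by the gap length, so one should either take partitions adapted to the (countably many) jumps with suitably chosen tags, partition the spectrum rather than the parameter interval, or simply read the integral as the spectral integral $\int\lambda\,dE^A(\lambda)$ under your change of variables --- a repair that does not affect the lemma or its uses in the paper.
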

\begin{proof}[Sketch of proof]
If $s$ is such that $\mu(s,A)$ is a point of continuity of the distribution function $d_A$,
then let $p_s$ be the spectral projection $E^A((\mu(s,A),\infty))$.
At any remaining points $s$, the projection $E^A(\{\mu(s,A)\})$ is nonzero;
there are at most countably many values $r$ where $E^A(\{r\})$ is nonzero;
for each of them, let $a(r)=\tau(E^A(\{r\})$ and
choose an increasing family $(q^{(r)}_t)_{0\le t\le a(r)}$ of projections with $\tau(q^{(r)}_t)=t$ and $q^{(r)}_{a(r)}=E^A(\{r\})$;
when $\mu(s,A)=r$ is one of these points,
let $p_s=E^A((\mu(s,A),\infty))+q^{(r)}_t$ with $t$ chosen so that $\tau(p_s)=s$.
\end{proof}

\subsection{Fuglede-Kadison determinant and Brown measure}
\label{subsec:FK}

Fuglede and Kadison~\cite{FK} constructed a mapping $\Delta:\mathcal{M}\to\mathbb{R}_+$ which is a homomorphism with respect to the multiplication. This mapping is defined by
\begin{equation}\label{det}
\Delta(T)=\exp(\tau(\log(|T|))),\quad T\in\mathcal{M}.
\end{equation}
For every operator $T,$ the function
\begin{equation}\label{subhar}
\lambda\to\tau(\log(|T-\lambda|)),\quad\lambda\in\mathbb{C},
\end{equation}
is shown to be subharmonic by Brown \cite{Brown}. Using this fact, Brown constructed a probability measure $\nu_T$ such that
\begin{equation}\label{br def}
\tau(\log(|T-\lambda|))=\int_{\mathbb{C}}\log(|z-\lambda|)d\nu_T(z)\quad \lambda\in\mathbb{C}.
\end{equation}
This $\nu_T$, (called the Brown measure of $T$) can be viewed as the II$_1$--analogue of the spectral counting measure (according
to algebraic multiplicity) on matrices. It can be recovered by taking the Laplacian of the mapping in \eqref{subhar}.

The following is Proposition 2.24 of \cite{HS1} and a consequence of it.
\begin{thm}\label{thm:det nu matrix}
If $T\in\mathcal{M}$ and if $p\in\mathcal{M}$ is a projection such that $Tp=pTp,$ so that we may write
$T=\left(\begin{smallmatrix}
A&B\\
0&C
\end{smallmatrix}\right),$
where $A=Tp$ and $C=(1-p)T,$ then
\begin{equation}\label{eq:DeltaT}
\Delta_{\mathcal{M}}(T)=\Delta_{p\mathcal{M}p}(A)^{\tau(p)}\Delta_{(1-p)\mathcal{M}(1-p)}(C)^{\tau(1-p)}
\end{equation}
and
$$\nu_T=\tau(p)\nu_A+\tau(1-p)\nu_C.$$
\end{thm}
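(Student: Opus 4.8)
The plan is to reduce everything to the single identity \eqref{eq:DeltaT}: once it is known for all operators compressed by $p$ (hence for every translate $T-\lambda$), the formula $\nu_T=\tau(p)\nu_A+\tau(1-p)\nu_C$ follows from the defining property \eqref{br def} of the Brown measure together with uniqueness of logarithmic potentials. To prove \eqref{eq:DeltaT} I would first dispose of the invertible case. If $T=\left(\begin{smallmatrix}A&B\\0&C\end{smallmatrix}\right)$ is invertible in $\mathcal{M}$, then $A$ is invertible in $p\mathcal{M}p$, $C$ is invertible in $(1-p)\mathcal{M}(1-p)$, and one has the factorization $T=\left(\begin{smallmatrix}A&0\\0&C\end{smallmatrix}\right)\left(\begin{smallmatrix}p&A^{-1}B\\0&1-p\end{smallmatrix}\right)$. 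Since $\Delta_{\mathcal{M}}$ is multiplicative, it is enough to evaluate the two factors. The second equals $1+N$ with $N=\left(\begin{smallmatrix}0&A^{-1}B\\0&0\end{smallmatrix}\right)$ nilpotent, so $\sigma(1+N)=\{1\}$ and $(1+N)^{-1}=1-N$ also has spectrum $\{1\}$; applying the elementary bound $\Delta_{\mathcal{M}}(S)\le r(S)$ (which follows from $\Delta_{\mathcal{M}}(S)^{n}=\Delta_{\mathcal{M}}(S^{n})\le\|S^{n}\|_{\infty}$ and the spectral radius formula) to $1+N$ and to its inverse gives $\Delta_{\mathcal{M}}(1+N)=1$. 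For the first factor, splitting $\tau$ over the two corners (so $\tau(X)=\tau(p)\tau_{p}(X)$ for $X\in p\mathcal{M}p$, with $\tau_{p}$ the normalized trace) gives at once $\Delta_{\mathcal{M}}\left(\begin{smallmatrix}A&0\\0&C\end{smallmatrix}\right)=\Delta_{p\mathcal{M}p}(A)^{\tau(p)}\Delta_{(1-p)\mathcal{M}(1-p)}(C)^{\tau(1-p)}$, which is \eqref{eq:DeltaT} in this case.

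For arbitrary $T$ I would deform it to an invertible operator without leaving the class of operators compressed by $p$. Using polar decomposition in $p\mathcal{M}p$, write $A=u_{A}|A|$ and extend the partial isometry $u_{A}$ to a unitary $\tilde u_{A}\in p\mathcal{M}p$ (possible by finiteness), so $A=\tilde u_{A}|A|$; do the same with $C=\tilde u_{C}|C|$, and put $\tilde u=\tilde u_{A}+\tilde u_{C}$, a unitary in $\mathcal{M}$. Then $T_{n}:=T+\tfrac1n\tilde u$ still satisfies $T_{n}p=pT_{n}p$, and its diagonal corners $\tilde u_{A}(|A|+\tfrac1n p)$ and $\tilde u_{C}(|C|+\tfrac1n(1-p))$ are invertible, so $T_{n}$ is invertible. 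By the invertible case, $\Delta_{\mathcal{M}}(\tilde u_{A}X)=\Delta_{\mathcal{M}}(X)$, and monotone convergence ($\Delta_{p\mathcal{M}p}(|A|+\tfrac1n p)\downarrow\Delta_{p\mathcal{M}p}(|A|)=\Delta_{p\mathcal{M}p}(A)$, and likewise for $C$), the quantity $\Delta_{\mathcal{M}}(T_{n})$ tends to the right-hand side of \eqref{eq:DeltaT}. On the other hand $T_{n}=\tilde u(\tilde u^{*}T+\tfrac1n1)$, so $\Delta_{\mathcal{M}}(T_{n})=\Delta_{\mathcal{M}}\big(\tilde u^{*}T-(-\tfrac1n)\big)$, which tends to $\Delta_{\mathcal{M}}(\tilde u^{*}T)=\Delta_{\mathcal{M}}(T)$ because $\lambda\mapsto\tau(\log|\tilde u^{*}T-\lambda|)$ is continuous. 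Equating the two limits yields \eqref{eq:DeltaT}.

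I expect the main obstacle to be exactly this last passage to the limit. The functional $\Delta_{\mathcal{M}}$ is in general only upper semicontinuous, which by itself delivers only one of the two inequalities in \eqref{eq:DeltaT}; and the obvious route to the reverse inequality also fails, since the Hadamard--Fischer-type bound $\Delta_{\mathcal{M}}(T)^{2}=\Delta_{\mathcal{M}}(T^{*}T)\le\Delta_{p\mathcal{M}p}(A^{*}A)^{\tau(p)}\Delta_{(1-p)\mathcal{M}(1-p)}(B^{*}B+C^{*}C)^{\tau(1-p)}$ (valid because $-\log$ is operator convex) carries the off-diagonal block $B$ with the wrong sign. What makes the argument go through is the genuine continuity --- not merely upper semicontinuity --- of $\lambda\mapsto\tau(\log|S-\lambda|)$, which is part of the classical construction of the Brown measure recalled in \S\ref{subsec:FK}; invoking it correctly is the real point.

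Finally, the Brown-measure identity. Fixing $T$ with $Tp=pTp$ and applying \eqref{eq:DeltaT} to $T-\lambda$ for every $\lambda\in\mathbb{C}$ --- noting $(T-\lambda)p=p(T-\lambda)p$ with corners $A-\lambda p$ and $C-\lambda(1-p)$ --- the defining relation \eqref{br def} turns the identity into
\[
\int_{\mathbb{C}}\log|z-\lambda|\,d\nu_{T}(z)=\int_{\mathbb{C}}\log|z-\lambda|\,d\big(\tau(p)\nu_{A}+\tau(1-p)\nu_{C}\big)(z),\qquad\lambda\in\mathbb{C}.
\]
Thus $\nu_{T}$ and $\tau(p)\nu_{A}+\tau(1-p)\nu_{C}$ are compactly supported probability measures on $\mathbb{C}$ with the same logarithmic potential; since such a measure is recovered as $\tfrac1{2\pi}$ times the distributional Laplacian of its potential, the two measures coincide, which is the assertion.
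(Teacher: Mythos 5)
The paper does not actually prove this statement; it is quoted from Proposition~2.24 of [HS1], so your proposal has to be judged on correctness rather than against an argument in the text. Your treatment of the invertible case is fine, and so is the final deduction of $\nu_T=\tau(p)\nu_A+\tau(1-p)\nu_C$ from \eqref{eq:DeltaT} applied to $T-\lambda$ together with recovery of a measure from its logarithmic potential. The gap is exactly where you predicted it, and your proposed repair does not work: the function $\lambda\mapsto\tau(\log|S-\lambda|)=\int\log|z-\lambda|\,d\nu_S(z)$ is subharmonic, hence upper semicontinuous, but it is \emph{not} continuous in general, and continuity is not ``part of the classical construction of the Brown measure'' --- Brown proves only subharmonicity. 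Logarithmic potentials of measures with atoms are genuinely discontinuous: if $\nu_S$ has an atom at $-1/n$ for every $n$ (say $\nu_S=\sum_{k\ge1}2^{-k}\delta_{-1/k}$, realized by a normal $S$), then $\Delta(S+\tfrac1n)=0$ for all $n$ while $\Delta(S)=\exp(-\sum_k 2^{-k}\log k)>0$. Nothing in your construction prevents $\nu_{\tilde u^*T}$ from having such atoms, so the step $\Delta(\tilde u^*T+\tfrac1n)\to\Delta(\tilde u^*T)$ is unjustified. What your limiting argument does prove, via upper semicontinuity of $\Delta$ in the norm topology ($\Delta(X)=\inf_{\varepsilon>0}\Delta(|X|^2+\varepsilon)^{1/2}$ is an infimum of norm-continuous functions), is the inequality $\Delta_{p\mathcal{M}p}(A)^{\tau(p)}\Delta_{(1-p)\mathcal{M}(1-p)}(C)^{\tau(1-p)}\le\Delta_{\mathcal{M}}(T)$; the reverse inequality is the one that remains open in your write-up.

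The clean repair stays inside your first paragraph and needs no invertibility and no limits: use the factorization $T=\left(\begin{smallmatrix}1&0\\0&C\end{smallmatrix}\right)\left(\begin{smallmatrix}1&B\\0&1\end{smallmatrix}\right)\left(\begin{smallmatrix}A&0\\0&1\end{smallmatrix}\right)$, valid for arbitrary $A,B,C$, together with the full multiplicativity of the Fuglede--Kadison determinant on all of $\mathcal{M}$ (not merely on invertibles), which is the form in which the paper quotes [FK] in \S\ref{subsec:FK}; note also that $\Delta(XY)\le\|X\|_\infty\Delta(Y)$ handles the degenerate cases where some factor has determinant zero. Your own computations then finish the job: the middle factor is unipotent and has determinant $1$, while splitting the trace over the corners gives $\Delta_{\mathcal{M}}\left(\begin{smallmatrix}A&0\\0&1\end{smallmatrix}\right)=\Delta_{p\mathcal{M}p}(A)^{\tau(p)}$ and $\Delta_{\mathcal{M}}\left(\begin{smallmatrix}1&0\\0&C\end{smallmatrix}\right)=\Delta_{(1-p)\mathcal{M}(1-p)}(C)^{\tau(1-p)}$. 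If instead you insist on using multiplicativity only for invertible elements, you must supply the missing inequality $\Delta_{\mathcal{M}}(T)\le\Delta_{p\mathcal{M}p}(A)^{\tau(p)}\Delta_{(1-p)\mathcal{M}(1-p)}(C)^{\tau(1-p)}$ by some other means; your approximation scheme as it stands cannot deliver it.
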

We will use the equation~\eqref{eq:DeltaT} also in the case of $p=0$ or $p=1,$ by making the convention $\Delta_{\{0\}}(0)^0=1.$

\subsection{Haagerup--Schultz projections and s.o.t.--quasinilpotent operators}
It is proved in \cite{HS2} that, for every $T\in\mathcal{M},$ $((T^*)^nT^n)^{1/2n}$ converges as $n\to\infty$ in strong operator topology. The spectral projection of the limiting operator on the interval $[0,r]$ is exactly the Haagerup--Schultz projection $p_{\mathcal{B}_r}$ from Theorem~\ref{thm:HS} corresponding to the ball $\mathcal{B}_r=\{|z|\leq r\}.$ Thus, an operator $T$ has Brown measure supported on $\{0\}$ if and only if $((T^*)^nT^n)^{1/2n}$ converges in strong operator topology to $0.$ We call such operators s.o.t.-quasinilpotent (this notation was introduced in \cite{DFS}).

Though aesthetically attractive, the above definition of the projection $p_{\mathcal{B}_r}$ is not suitable for our purposes. We employ a different characterization also taken from \cite{HS2}. Define the subspace $H_r$ of the Hilbert space $H$ by setting
\begin{equation}\label{eq:H-S H_r}
H_r=\{\xi\in H:\ \exists\xi_n\to\xi,\text{ with }\ \limsup_{n\to\infty}\|T^n\xi_n\|^{1/n}\leq r\}.
\end{equation}
The projection onto the subspace $H_r$ is shown in \cite{HS2} to be the Haagerup--Schultz projection $p_{\mathcal{B}_r}$ corresponding to the ball $\mathcal{B}_r=\{|z|\leq r\}.$

We will not need the construction of Haagerup--Schultz projections for sets other than balls; see \cite{HS2} for the construction in the general case.

\subsection{Submajorization and logarithmic submajorization}

The operator $B\in\mathcal{M}$ is said to be submajorized by the operator $A\in\mathcal{M}$ (written $B\prec\prec A$) if
$$\int_0^t\mu(s,B)ds\leq\int_0^t\mu(s,A)ds,\qquad 0<t<1.$$

The importance of submajorization can be observed from the following theorem, which is really a result about functions rather than operators and is
essentially an inequality of Hardy, Littlewood and Polya (see e.g Lemma II.3.4 of \cite{GK1} for the sequence version, or Proposition 14.H.1.a of \cite{MOA} for a result that implies the following).

\begin{thm}\label{hlp theorem} If $A,B\in\mathcal{M}$ and if $B\prec\prec A,$ then for every increasing convex function $\Phi$ on $[0,\infty),$ we have
$$\tau(\Phi(|B|))\leq\tau(\Phi(|A|)).$$
\end{thm}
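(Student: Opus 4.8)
The plan is to prove the two assertions separately: the equality $\int\Phi(|z|)\,d\nu_T(z)=\tau(\Phi(|N|))$ (here the $\mu_T$ of the statement denotes the Brown measure $\nu_T$), and then the inequality $\tau(\Phi(|N|))\le\tau(\Phi(|T|))$. For the equality I would use that $N$ is normal with $\nu_N=\nu_T$ by Theorem~\ref{decomposition}\eqref{decb}. For a normal operator the Brown measure coincides with the spectral distribution $\tau\circ E^N$: indeed $\lambda\mapsto\tau(\log|N-\lambda|)=\int\log|z-\lambda|\,d(\tau\circ E^N)(z)$ by the functional calculus, so uniqueness in Brown's construction forces $\nu_N=\tau\circ E^N$. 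Hence, again by the functional calculus, $\tau(\Phi(|N|))=\int\Phi(|z|)\,d(\tau\circ E^N)(z)=\int\Phi(|z|)\,d\nu_N(z)=\int\Phi(|z|)\,d\nu_T(z)$, which is the desired equality.

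For the inequality I would reduce matters to the logarithmic submajorization
\[
\int_0^t\log\mu(s,N)\,ds\le\int_0^t\log\mu(s,T)\,ds,\qquad 0<t<1.
\]
Writing $\Psi=\Phi\circ\exp$, which is increasing (as $\Phi$ is increasing) and convex by hypothesis, we have $\Phi(\mu(s,\cdot))=\Psi(\log\mu(s,\cdot))$; since $s\mapsto\log\mu(s,N)$ and $s\mapsto\log\mu(s,T)$ are nonincreasing, the Hardy--Littlewood--P\'olya inequality for functions (the reference given for Theorem~\ref{hlp theorem}), applied to the convex increasing $\Psi$, yields $\int_0^1\Phi(\mu(s,N))\,ds\le\int_0^1\Phi(\mu(s,T))\,ds$, that is, $\tau(\Phi(|N|))\le\tau(\Phi(|T|))$. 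A possible atom of $\nu_T$ at $0$, where $\mu(s,N)=0$, combined with a jump of $\Phi$ at $0$, is handled by splitting off a multiple of $\mathbf 1_{(0,\infty)}$; for that summand the inequality reduces to $1-\nu_T(\{0\})\le 1-\tau(\ker T)$, which holds because $\nu_T(\{0\})\ge\tau(\ker T)$.

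The main work is the logarithmic submajorization, for which the Haagerup--Schultz projections are the key tool. Since $N$ is normal with $\nu_N=\nu_T$, the function $\mu(\cdot,N)$ is the nonincreasing rearrangement of $|z|$ with respect to $\nu_T$. Fix $r>0$, let $p=p_{\mathcal{B}_r}$ be the Haagerup--Schultz projection of $T$ for the ball $\mathcal{B}_r=\{|z|\le r\}$, and put $t=\tau(1-p)=\nu_T(\{|z|>r\})$. Writing $T=\left(\begin{smallmatrix}A&B\\0&C\end{smallmatrix}\right)$ as in Theorem~\ref{thm:det nu matrix} with $C=(1-p)T(1-p)$, the decomposition $\nu_T=\tau(p)\nu_A+\tau(1-p)\nu_C$, together with the facts that $\nu_A$ is supported in $\mathcal{B}_r$ and $\nu_C$ in its complement, gives
\[
\int_0^t\log\mu(s,N)\,ds=\int_{\{|z|>r\}}\log|z|\,d\nu_T(z)=\tau(1-p)\int\log|z|\,d\nu_C(z)=\tau(\log|C|).
\]
On the other hand $\tau(\log|C|)=\int_0^t\log\mu(s,C)\,ds$, and since $1-p$ is a projection the elementary estimate $\mu(s,C)=\mu(s,(1-p)T(1-p))\le\mu(s,T)$ holds, whence $\int_0^t\log\mu(s,C)\,ds\le\int_0^t\log\mu(s,T)\,ds$. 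This establishes the logarithmic submajorization at every $t$ of the form $\nu_T(\{|z|>r\})$.

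Finally I would pass from these special values of $t$ to all $t\in(0,1)$. Both $t\mapsto\int_0^t\log\mu(s,N)\,ds$ and $t\mapsto\int_0^t\log\mu(s,T)\,ds$ are concave, their integrands being nonincreasing; so on each interval $[t_-,t_+]$ skipped because of an atom of the radial part of $\nu_T$ at some radius $r_0$, the left-hand side is affine while the right-hand side lies above the chord joining its endpoint values, and since the inequality already holds at $t_-$ and $t_+$, concavity forces it throughout $[t_-,t_+]$. Values $t>1-\nu_T(\{0\})$ are trivial, since there $\mu(s,N)=0$ and the left integral is $-\infty$. The main obstacle is the computation in the previous paragraph, namely correctly identifying the top part of the singular value function of $N$ with the Fuglede--Kadison determinant of the compression $C$ through Theorem~\ref{thm:det nu matrix}; once this and the compression estimate $\mu(s,(1-p)T(1-p))\le\mu(s,T)$ are in hand, the concavity interpolation and the passage to a general $\Phi$ are routine.
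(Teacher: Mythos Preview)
Your proposal does not address the stated theorem. Theorem~\ref{hlp theorem} is the general Hardy--Littlewood--P\'olya rearrangement inequality: for arbitrary $A,B\in\mathcal{M}$ with $B\prec\prec A$ and any increasing convex $\Phi$ on $[0,\infty)$, one has $\tau(\Phi(|B|))\le\tau(\Phi(|A|))$. The paper does not prove this; it is quoted from the literature (Gohberg--Krein, Marshall--Olkin--Arnold). What you have written is instead an argument for Theorem~\ref{thm:WeylII1}, the Weyl-type estimate $\int\Phi(|z|)\,d\nu_T(z)=\tau(\Phi(|N|))\le\tau(\Phi(|T|))$ for the specific normal part $N$ of a given $T$. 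You even invoke Theorem~\ref{hlp theorem} as a tool in your argument, so you are using the result you were asked to prove rather than establishing it.

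If the target was really Theorem~\ref{thm:WeylII1}, your approach is workable but differs from the paper's. The paper gets $N\prec\prec_{\log}T$ by combining Lemma~2.20 of \cite{HS1} with the criterion of Lemma~\ref{logmaj another}; you instead derive it directly from the Haagerup--Schultz ball projections via Theorem~\ref{thm:det nu matrix} and the compression bound $\mu(s,(1-p)T(1-p))\le\mu(s,T)$, followed by a concavity interpolation for the remaining values of $t$. That part is sound. Your final step, however, applies the Hardy--Littlewood--P\'olya inequality with $\Psi=\Phi\circ\exp$ to the functions $s\mapsto\log\mu(s,N)$ and $s\mapsto\log\mu(s,T)$, which take values in $[-\infty,\infty)$, whereas Theorem~\ref{hlp theorem} (and the cited references) are formulated on $[0,\infty)$. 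The paper handles exactly this issue by passing through Lemma~\ref{stamp} (so that $n|N|+1\prec\prec_{\log}n|T|+1$ with both sides $\ge1$), then applying Lemma~\ref{lem:log invertibles} and Theorem~\ref{hlp theorem}, and finally letting $n\to\infty$; you would need an analogous regularization rather than a direct appeal to HLP on possibly unbounded-below logarithms.
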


We also need the notion of logarithmic submajorization. The operator $B\in\mathcal{M}$ is said to be logarithmically submajorized by the operator $A\in\mathcal{M}$ (written $B\prec\prec_{\log}A$) if
$$\int_0^t\log(\mu(s,B))ds\leq\int_0^t\log(\mu(s,A))ds,\qquad 0<t<1.$$

We collect some easy observations into a lemma, for future use.
\begin{lem}\label{lem:log invertibles}
If $A,B\in\mathcal{M}$ and if $c>0$, then 
\begin{align*}
B\prec\prec A \qquad&\Leftrightarrow\qquad cB\prec\prec cA, \\
B\prec\prec_{\log}A \qquad&\Leftrightarrow\qquad cB\prec\prec_{\log}cA. 
\end{align*}
Furthermore,
if $1\le A,B\in\mathcal{M}$, where $1$ represents the identity operator,
then
\[
B\prec\prec_{\log} A\qquad\Leftrightarrow\qquad \log(B)\prec\prec\log(A).
\]
\end{lem}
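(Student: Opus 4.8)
The plan is to handle the three equivalences in order, each by reducing to elementary properties of the singular value function. For the first equivalence, recall that $\mu(s,cX)=c\,\mu(s,X)$ for $c>0$ and any $X\in\mathcal{M}$; this is immediate from the definition $\mu(t,X)=\inf\{\|Xp\|_\infty:\tau(1-p)\le t\}$ since $\|(cX)p\|_\infty=c\|Xp\|_\infty$. Consequently $\int_0^t\mu(s,cB)\,ds=c\int_0^t\mu(s,B)\,ds$ and likewise for $A$, so dividing the submajorization inequality by $c>0$ gives the equivalence $B\prec\prec A\Leftrightarrow cB\prec\prec cA$. For the second equivalence, the same scaling gives $\log(\mu(s,cB))=\log c+\log\mu(s,B)$, so $\int_0^t\log\mu(s,cB)\,ds=t\log c+\int_0^t\log\mu(s,B)\,ds$; the additive term $t\log c$ is the same on both sides and cancels, yielding $B\prec\prec_{\log}A\Leftrightarrow cB\prec\prec_{\log}cA$.

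For the third equivalence, I would first note that if $1\le X\in\mathcal{M}$ then $\mu(s,X)\ge1$ for all $s$, so $\log\mu(s,X)\ge0$ is well-defined and finite (since $X$ is bounded). The key observation is that $\log$ is an increasing continuous function on $[1,\infty)$, and for a positive operator $X\ge1$ one has $\mu(s,\log X)=\log\mu(s,X)$: indeed, $\log X$ is positive with $\mu(s,\log X)$ determined by the distribution function of $\log X$, and $E^{\log X}((\log r,\infty))=E^X((r,\infty))$ for $r\ge1$ by the spectral mapping theorem, so the distribution functions match up under the substitution $s\mapsto\log s$. Hence $\int_0^t\log\mu(s,B)\,ds=\int_0^t\mu(s,\log B)\,ds$ and similarly for $A$, and the two submajorization statements are literally the same inequality. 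I would phrase this compactly: $B\prec\prec_{\log}A$ says $\int_0^t\mu(s,\log B)\,ds\le\int_0^t\mu(s,\log A)\,ds$, which is exactly $\log B\prec\prec\log A$.

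None of the three steps presents a genuine obstacle; this is a bookkeeping lemma. The only point requiring a little care is the identity $\mu(s,\log X)=\log\mu(s,X)$ for $X\ge1$, where one must make sure the spectral mapping under the (monotone) function $\log$ interacts correctly with the definition of $\mu$ via distribution functions — but this follows directly from the fact that $\mu(\cdot,X)$ is (essentially) the decreasing rearrangement of the spectral distribution of $|X|$ together with monotonicity of $\log$. I would simply cite the standard fact that $\mu(s,f(X))=f(\mu(s,X))$ for $0\le X\in\mathcal{M}$ and any increasing continuous $f$ on the spectrum of $X$ with $f\ge0$ (see, e.g., \cite{FackKosaki}), applied with $f=\log$ on $[1,\infty)$.
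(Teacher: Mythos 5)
Your proposal is correct and follows essentially the same route as the paper: the scaling equivalences come from $\mu(s,cX)=c\,\mu(s,X)$, and the logarithmic one from the identity $\mu(s,\log X)=\log\mu(s,X)$ for $X\ge 1$, which is exactly the fact the paper invokes. Your additional remarks (the cancellation of the $t\log c$ term and the spectral-mapping justification of $\mu(s,\log X)=\log\mu(s,X)$) merely flesh out details the paper leaves implicit.
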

\begin{proof}
The first assertion follows from the fact that $\mu(s,cA)=c\mu(s,A)$ and the second from the fact that,
for $A\ge1$, we have $\mu(s,\log(A))=\log(\mu(s,A))$.
\end{proof}

\subsection{Conditional expectation}

If $\mathcal{D}$ is a von Neumann subalgebra of the finite von Neumann algebra $\mathcal{M}$
with normal, faithful, tracial state $\tau$,
then there exists a unique linear operator ${\rm Exp}_{\mathcal{D}}:\mathcal{M}\to\mathcal{D}$ such that,  for all $A\in\mathcal{M}$ and $B\in\mathcal{D}$,
\begin{enumerate}[{\rm (a)}]
\item ${\rm Exp}_{\mathcal{D}}(AB)={\rm Exp}_{\mathcal{D}}(A)B$
\item ${\rm Exp}_{\mathcal{D}}(BA)=B{\rm Exp}_{\mathcal{D}}(A)$
\item $\tau({\rm Exp}_{\mathcal{D}}(A))=\tau(A)$.
\end{enumerate}
Furthermore, ${\rm Exp}_{\mathcal{D}}$ is positive (in fact, completely positive), of norm $1$ and can be realised as the orthogonal projection from $\mathcal{L}_2(\mathcal{M})$ onto $\mathcal{L}_2(\mathcal{D})$ restricted to $\mathcal{M}$.
See, for example, \cite{SS} for these and other facts.

It is well known and not difficult to verify that for the action of $\mathcal{M}$ on $\mathcal{L}_2(\mathcal{M})$,
the strong operator topology on bounded sets
of $\mathcal{M}$ coincides with the topology provided by the norm $\|x\|_2=\tau(x^*x)^{1/2}$.
From this, it is easy to prove the following well known lemma:
\begin{lem}\label{exp limit lemma} Assume $\mathcal{A}_n,$ $n\geq1,$ is a family of von Neumann subalgebras in $\mathcal{M},$
that is either increasing or decreasing in $n.$ Let $\mathcal{A}$ be the strong operator closure of $\bigcup_{n=1}^\infty\mathcal{A}_n$ in the first case
and $\mathcal{A}=\bigcap_{n=1}^\infty\mathcal{A}_n$ in the second case. Then
$${\rm Exp}_{\mathcal{A}}(T)=\lim_{n\to\infty}{\rm Exp}_{\mathcal{A}_n}(T),\quad(T\in\mathcal{M}),$$
where the limit is taken in the strong operator topology.
\end{lem}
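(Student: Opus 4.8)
The plan is to transport everything to the Hilbert space $\mathcal{L}_2(\mathcal{M})$, where each conditional expectation becomes an orthogonal projection, and then to invoke the elementary fact that a monotone sequence of orthogonal projections converges strongly to the projection onto the closed span (increasing case), respectively the intersection (decreasing case), of its ranges. For $x\in\mathcal{M}$ write $\hat x$ for its image in $\mathcal{L}_2(\mathcal{M})$, and for a von Neumann subalgebra $\mathcal{D}\subseteq\mathcal{M}$ let $P_{\mathcal{D}}$ denote the orthogonal projection of $\mathcal{L}_2(\mathcal{M})$ onto $\mathcal{L}_2(\mathcal{D})$; by the realisation of ${\rm Exp}_{\mathcal{D}}$ recalled above we have $\widehat{{\rm Exp}_{\mathcal{D}}(x)}=P_{\mathcal{D}}\hat x$ for all $x\in\mathcal{M}$.

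First I would reduce the asserted strong operator convergence to $\|\cdot\|_2$-convergence. Since ${\rm Exp}_{\mathcal{D}}$ has norm $1$, every operator ${\rm Exp}_{\mathcal{A}_n}(T)$, as well as ${\rm Exp}_{\mathcal{A}}(T)$, lies in the operator-norm ball of radius $\|T\|_\infty$. As recalled just before the lemma, on bounded subsets of $\mathcal{M}$ the strong operator topology coincides with the topology induced by $\|\cdot\|_2$; hence it suffices to show $P_{\mathcal{A}_n}\hat T\to P_{\mathcal{A}}\hat T$ in $\mathcal{L}_2(\mathcal{M})$. I would in fact prove the stronger statement that $P_{\mathcal{A}_n}\to P_{\mathcal{A}}$ in the strong topology of $\mathcal{L}_2(\mathcal{M})$. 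The subspaces $\mathcal{L}_2(\mathcal{A}_n)$ are increasing, respectively decreasing, so by the standard Hilbert space lemma $P_{\mathcal{A}_n}$ converges strongly to the projection onto $\overline{\bigcup_n\mathcal{L}_2(\mathcal{A}_n)}$, respectively onto $\bigcap_n\mathcal{L}_2(\mathcal{A}_n)$. Everything then comes down to identifying this range with $\mathcal{L}_2(\mathcal{A})$.

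In the increasing case one inclusion $\overline{\bigcup_n\mathcal{L}_2(\mathcal{A}_n)}\subseteq\mathcal{L}_2(\mathcal{A})$ is immediate from $\mathcal{A}_n\subseteq\mathcal{A}$. For the reverse, $\bigcup_n\mathcal{A}_n$ is a unital $*$-subalgebra whose strong operator closure is $\mathcal{A}$; by the Kaplansky density theorem the unit ball of $\bigcup_n\mathcal{A}_n$ is strongly dense in the unit ball of $\mathcal{A}$, and since the strong and $\|\cdot\|_2$ topologies agree on bounded sets this density holds in $\|\cdot\|_2$. Hence $\hat a\in\overline{\bigcup_n\mathcal{L}_2(\mathcal{A}_n)}$ for every $a$ in the unit ball of $\mathcal{A}$, and by scaling for every $a\in\mathcal{A}$; taking $\|\cdot\|_2$-closures yields $\mathcal{L}_2(\mathcal{A})\subseteq\overline{\bigcup_n\mathcal{L}_2(\mathcal{A}_n)}$.

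In the decreasing case the inclusion $\mathcal{L}_2(\mathcal{A})\subseteq\bigcap_n\mathcal{L}_2(\mathcal{A}_n)$ again follows from $\mathcal{A}\subseteq\mathcal{A}_n$. The reverse inclusion is the main obstacle, because a general element of $\bigcap_n\mathcal{L}_2(\mathcal{A}_n)$ need not be the image of a bounded operator. I would handle it by projecting bounded approximants. Write $P:=\lim_nP_{\mathcal{A}_n}$ for the strong limit, which is the projection onto $\bigcap_n\mathcal{L}_2(\mathcal{A}_n)$. Given $\xi$ in this intersection, choose $x_k\in\mathcal{M}$ with $\hat{x_k}\to\xi$. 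For fixed $k$ the elements ${\rm Exp}_{\mathcal{A}_n}(x_k)$, $n\ge1$, all have operator norm at most $\|x_k\|_\infty$, and $\widehat{{\rm Exp}_{\mathcal{A}_n}(x_k)}=P_{\mathcal{A}_n}\hat{x_k}\to P\hat{x_k}$ in $\|\cdot\|_2$ as $n\to\infty$; since a $\|\cdot\|_2$-convergent, operator-norm-bounded sequence in $\mathcal{M}$ has its limit in $\mathcal{M}$, we get $P\hat{x_k}=\hat{z_k}$ for some $z_k\in\mathcal{M}$. Now $\hat{z_k}\in\bigcap_n\mathcal{L}_2(\mathcal{A}_n)$ together with the identity $\mathcal{A}_n=\mathcal{L}_2(\mathcal{A}_n)\cap\mathcal{M}$ (valid because $P_{\mathcal{A}_n}\hat x=\hat x$ forces ${\rm Exp}_{\mathcal{A}_n}(x)=x\in\mathcal{A}_n$) forces $z_k\in\bigcap_n\mathcal{A}_n=\mathcal{A}$. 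Finally $\hat{z_k}=P\hat{x_k}\to P\xi=\xi$, so $\xi\in\overline{\hat{\mathcal{A}}}=\mathcal{L}_2(\mathcal{A})$, which completes the range identification and hence the proof.
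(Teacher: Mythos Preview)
Your proof is correct and follows precisely the route the paper indicates: the paper does not actually write out a proof but merely remarks that the lemma is easy once one knows that on bounded subsets of $\mathcal{M}$ the strong operator topology agrees with the $\|\cdot\|_2$-topology, and your argument is exactly the natural way to fill in those details by passing to orthogonal projections on $\mathcal{L}_2(\mathcal{M})$. In particular, your handling of the decreasing case (showing $\bigcap_n\mathcal{L}_2(\mathcal{A}_n)=\mathcal{L}_2(\mathcal{A})$ by pushing bounded approximants through the limiting projection and using that norm-bounded, $\|\cdot\|_2$-convergent sequences have their limit in $\mathcal{M}$) is sound.
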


\section{Construction of the normal part}

Throughout this section, $\mathcal{M}$ will be a diffuse, finite von Neumann algebra and $\tau$ a
normal, faithul, tracial state on $\mathcal{M}$.
Our plan for proving Theorem \ref{decomposition} is to take as normal operator $N={\rm Exp}_{\mathcal{D}}(T)$ for a suitable commutative von Neumann subalgebra $\mathcal{D}$, namely, the one given below.

\begin{construct}\label{peano} Let $T\in\mathcal{M}$ and let $\rho:[0,1]\to\{|z|\leq \|T\|_{\infty}\}$ be a Peano curve.\footnote{A continuous surjective mapping $[0,1]\to[0,1]^2$ was first constructed by Peano \cite{Peano}. See, for example, \cite{Man} for details. We may take the ball rather than the square since they are homeomorphic.}
\begin{enumerate}[(a)]
\item Set $q_t$ to be the Haagerup--Schultz projection for $T$ associated to the Borel set $\rho([0,t]).$
Then $q_t$ is increasing in $t$.
Since $\tau(q_t)=\nu_T(\rho([0,t])$, we have $q_t=\wedge_{t'>t}q_{t'}$;
i.e., $q_t$ is strong-operator-topology continuous from the right in $t$.
\item Set $\mathcal{D}$ to be the von Neumann algebra generated by $\{q_t\mid t\in[0,1]\}.$
\item For every $n\geq0,$ set $\mathcal{D}_n$ to be the algebra generated by $q_{k/2^n},$ $0\leq k\leq 2^n.$
\end{enumerate}
For technical convenience, we will assume that the Brown measure of $T$ has no atom at $\rho(0)$ (i.e. $\nu_T(\rho(0))=0$). This ensures $q_0=0$ and it can always be arranged by modification of $\rho,$ if necessary. 
\end{construct}

Since the function $\rho$ is uniformly continuous, it follows that there exists a monotone function $\omega:[0,1]\to\mathbb{R}_+$ (called the modulus of continuity of $\rho$) such that $\omega(+0)=0$ and such that
$$|\rho(t_1)-\rho(t_2)|\leq\omega(|t_1-t_2|),\quad t_1,t_2\in[0,1].$$

\begin{lem}\label{lem:Enormconv}
Let $T\in\mathcal{M}$ and let $q_t$, $\mathcal{D}$ and $\mathcal{D}_n$ be as in Construction \ref{peano}.
Then ${\rm Exp}_{\mathcal{D}_n}(T)$ converges in norm to ${\rm Exp}_{\mathcal{D}}(T)$, and, in fact, we have
$$\|{\rm Exp}_{\mathcal{D}_n}(T)-{\rm Exp}_{\mathcal{D}}(T)\|_{\infty}\le\omega(2^{-n}),$$
where $\omega$ is the modulus of continuity of $\rho.$
\end{lem}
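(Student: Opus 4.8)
The plan is to compare the two conditional expectations by first reducing to a statement about finite staircase-type operators and then estimating the distance between the relevant diagonal compressions in operator norm, using the key geometric fact that $\rho([0,t])$ has diameter at most $\omega(2^{-n})$ when $t$ ranges over a dyadic subinterval of length $2^{-n}$. More precisely, fix $n$ and write $e_k=q_{k/2^n}-q_{(k-1)/2^n}$ for $1\le k\le 2^n$; these are pairwise orthogonal projections summing to $1$ (recall $q_0=0$ and $q_1=1$). Since each $q_{k/2^n}\in\mathcal{D}_n$ and the $e_k$ generate $\mathcal{D}_n$, one checks that $\mathrm{Exp}_{\mathcal{D}_n}$ can be described concretely once we know the structure of $\mathcal{D}_n$: $\mathcal{D}_n$ is the (commutative) algebra spanned by the $e_k$ together with the relevant subalgebra coming from the diffuseness, but what matters is that $\mathrm{Exp}_{\mathcal{D}}$ restricted further refines $\mathrm{Exp}_{\mathcal{D}_n}$, so by the tower property $\mathrm{Exp}_{\mathcal{D}_n}=\mathrm{Exp}_{\mathcal{D}_n}\circ\mathrm{Exp}_{\mathcal{D}}$, and hence it suffices to estimate $\|\mathrm{Exp}_{\mathcal{D}_n}(S)-S\|_\infty$ for $S=\mathrm{Exp}_{\mathcal{D}}(T)$.

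First I would record that $S=\mathrm{Exp}_{\mathcal{D}}(T)$ commutes with every $q_t$, and in particular $Sq_t=q_tSq_t$ for all $t$, so $S$ is ``upper triangular'' with respect to the whole family $(q_t)$; but since $S\in\mathcal{D}$ and $\mathcal{D}$ is abelian, in fact $Sq_t=q_tS$. Consequently $S$ commutes with each $e_k$, so $\sum_k e_kSe_k=S$ — wait, that would give distance zero, which is wrong. The resolution is that $\mathrm{Exp}_{\mathcal{D}_n}$ is \emph{not} simply $S\mapsto\sum_k e_kSe_k$, because $\mathcal{D}_n$ is a proper subalgebra of the abelian algebra generated by all the $e_k$ only if the $q_{k/2^n}$ fail to generate a maximal abelian subalgebra; the correct reading is that $S\in\mathcal{D}$ but $S\notin\mathcal{D}_n$ in general, and $\mathrm{Exp}_{\mathcal{D}_n}(S)$ is the $\|\cdot\|_2$-orthogonal projection of $S$ onto $\mathcal{D}_n$. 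So the right approach is: compute $T_n:=\mathrm{Exp}_{\mathcal{D}_n}(T)$ and $T_\infty:=\mathrm{Exp}_{\mathcal{D}}(T)$ directly and bound their difference via the Haagerup--Schultz / Brown-measure machinery. By Theorem~\ref{thm:HS}(c), the Brown measure of $q_tTq_t$ (in $q_t\mathcal{M}q_t$) is supported in $\rho([0,t])$, and of $(q_t-q_s)T(q_t-q_s)$ in $\rho([0,t])$ as well; iterating Theorem~\ref{thm:det nu matrix} along the dyadic filtration, the Brown measure of $e_kTe_k$ is supported in $\rho([(k-1)/2^n,\,k/2^n])$, a set of diameter $\le\omega(2^{-n})$.

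The main step is then to show that both $T_n$ and $T_\infty$ lie within $\omega(2^{-n})/2$ (or $\omega(2^{-n})$ directly, if one is less careful) of a common reference operator, namely a ``block-diagonal'' operator whose $k$-th block is a scalar $\lambda_k\in\rho([(k-1)/2^n,k/2^n])$. For $T_\infty$: since $T_\infty\in\mathcal{D}$ and $\mathrm{Exp}_{e_k\mathcal{D}e_k}$ applied to $e_kT_\infty e_k$ gives $e_k\mathrm{Exp}_{\mathcal{D}_n}(T)e_k$-related quantities — more cleanly, one uses that $e_kT_\infty e_k$ is the compression of a normal-ish operator whose Brown measure sits in a set of small diameter, hence $\|e_kT_\infty e_k-\lambda_k e_k\|_\infty$ is controlled; similarly for $e_kT_n e_k$. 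The cleanest route: both $\mathrm{Exp}_{\mathcal{D}_n}(T)$ and $\mathrm{Exp}_{\mathcal{D}}(T)$ have the same compression structure along the $e_k$ in the sense that $e_k\mathrm{Exp}_{\mathcal{D}}(T)e_k - e_k\mathrm{Exp}_{\mathcal{D}_n}(T)e_k = e_k\mathrm{Exp}_{\mathcal{D}}(T)e_k - e_k T_n e_k$, and both equal, up to conditional expectations onto subalgebras of $e_k\mathcal{D}e_k$, compressions of $e_kTe_k$; since $e_kTe_k$ has Brown measure in a set of diameter $\le\omega(2^{-n})$ and is triangularizable there, Theorem~\ref{thm:HS} applied inside $e_k\mathcal{M}e_k$ lets one subtract a scalar and bound the remainder's operator norm. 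The hard part will be making the bound on $\|e_kT_\infty e_k - \lambda_k e_k\|_\infty$ honest: \emph{a priori} an operator with Brown measure of small diameter need not be close to a scalar in operator norm (that is exactly the phenomenon of s.o.t.-quasinilpotency). So one must genuinely use that $T_\infty\in\mathcal{D}$, i.e. $T_\infty$ is \emph{normal}, and for a normal operator the operator norm of $e_kT_\infty e_k-\lambda_k e_k$ is governed by the support of its spectral measure, which lies in the closed support of the Brown measure, hence in $\overline{\rho([(k-1)/2^n,k/2^n])}$, of diameter $\le\omega(2^{-n})$. For $T_n$ one argues that $T_n$ is likewise normal — it is the conditional expectation onto the abelian $\mathcal{D}_n$ — so the same spectral estimate applies with $\lambda_k$ chosen as, say, $\rho((k-1)/2^n)$ for both; assembling the orthogonal blocks gives $\|T_n-T_\infty\|_\infty=\max_k\|e_k(T_n-T_\infty)e_k\|_\infty\le\omega(2^{-n})$, and then $\mathrm{Exp}_{\mathcal{D}_n}(T)\to\mathrm{Exp}_{\mathcal{D}}(T)$ in norm follows since $\omega(2^{-n})\to0$. (One should double-check the off-diagonal terms $e_j(T_n-T_\infty)e_k$ for $j\ne k$ vanish: both $T_n\in\mathcal{D}_n$ and $T_\infty\in\mathcal{D}$ commute with every $q_{k/2^n}$, hence with every $e_k$, so indeed $e_jT_ne_k=e_jT_\infty e_k=0$ for $j\ne k$, reducing everything to the diagonal blocks.)
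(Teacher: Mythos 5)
Your overall shape is right --- reduce to the diagonal blocks $e_k$, note that the Brown measure of $e_kTe_k$ in $e_k\mathcal{M}e_k$ sits in $\rho$ of a dyadic interval of diameter at most $\omega(2^{-n})$, and compare both expectations to a block-scalar operator --- but the key step is not justified and, as written, is circular. To bound $\|e_k\,{\rm Exp}_{\mathcal{D}}(T)\,e_k-\lambda_ke_k\|_\infty$ you appeal to the spectral measure of the normal operator ${\rm Exp}_{\mathcal{D}}(T)$ being supported in the support of its Brown measure, localized over $e_k$ to $\overline{\rho([(k-1)/2^n,k/2^n])}$. But at this stage nothing is known about the Brown measure of ${\rm Exp}_{\mathcal{D}}(T)$, nor about how its spectral projections interlace with the $e_k$: that it equals $\nu_T$ is precisely Proposition \ref{br equ}, which is proved \emph{from} Lemma \ref{lem:Enormconv}. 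The only handle on ${\rm Exp}_{\mathcal{D}}(T)$ available here is as the strong-operator limit of ${\rm Exp}_{\mathcal{D}_m}(T)$ (Lemma \ref{exp limit lemma}), and the honest argument is the paper's: for $m>n$, every scalar entry of ${\rm Exp}_{\mathcal{D}_m}(T)$ sitting under $e_k$ lies in $\overline{\rm conv}\bigl(\rho((\tfrac{k-1}{2^n},\tfrac{k}{2^n}])\bigr)$, whence $\|{\rm Exp}_{\mathcal{D}_n}(T)-{\rm Exp}_{\mathcal{D}_m}(T)\|_\infty\le\omega(2^{-n})$; the sequence is uniformly Cauchy and its SOT limit is therefore also its norm limit, giving the bound on letting $m\to\infty$. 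Your approach can be repaired along exactly these lines (e.g.\ by noting that the numerical range of $e_k{\rm Exp}_{\mathcal{D}_m}(T)e_k$ stays in the fixed convex hull and passes to the SOT limit), but then it becomes the paper's proof.

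Two smaller points. First, placing the scalar $\tau(e_kTe_k)/\tau(e_k)$ in $\overline{\rm conv}\bigl(\rho((\tfrac{k-1}{2^n},\tfrac{k}{2^n}])\bigr)$ is not automatic from the support statement for the Brown measure of $e_kTe_k$: you need Brown's Lidskii-type theorem, $\tau_{e_k\mathcal{M}e_k}(S)=\int z\,d\nu_S(z)$, to identify that scalar as the barycenter of the Brown measure; you never invoke this. Second, choosing the common reference scalar $\lambda_k=\rho((k-1)/2^n)$ and applying the triangle inequality loses a factor of $2$ and does not yield the stated bound $\omega(2^{-n})$; take instead $\lambda_k=\tau(e_kTe_k)/\tau(e_k)$, so that $e_k{\rm Exp}_{\mathcal{D}_n}(T)e_k=\lambda_ke_k$ exactly and only the ${\rm Exp}_{\mathcal{D}}(T)$ block contributes. (Your preliminary detour about $\mathcal{D}_n$ containing ``a subalgebra coming from diffuseness'' is also off: $\mathcal{D}_n$ is just the finite-dimensional span of the nonzero $e_k$, and ${\rm Exp}_{\mathcal{D}_n}$ replaces each block by its normalized trace, which is why it differs from $X\mapsto\sum_ke_kXe_k$.)
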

\begin{proof} By Theorem \ref{thm:HS}, the projections $q_{k/2^n},$ $0\leq k\leq 2^n,$ are increasing in $k.$ Letting $f^n_k=q_{(k+1)/2^n}-q_{k/2^n}$ for $0\le k<2^n,$ we have
\begin{equation}\label{eq:EDn}
{\rm Exp}_{\mathcal{D}_n}(T)=\sum_{\substack{0\le k<2^n \\ f^n_k\ne0}}\frac{\tau(f^n_kTf^n_k)}{\tau(f^n_k)}f^n_k.
\end{equation}
By Construction \ref{peano} and Theorem \ref{thm:HS}, when $f^n_k\ne0,$ the Brown measure of $f^n_kTf^n_k$ in $f^n_k\mathcal{M}f^n_k$ is supported in $\rho([0,\frac{k+1}{2^n}])\backslash \rho([0,\frac{k}{2^n}])$ (which is a subset of $\rho((\frac{k}{2^n},\frac{k+1}{2^n}])$). It follows from Theorem \ref{thm:HS} \eqref{hsc},\eqref{hsd} combined with Brown's analogue of Lidskii's theorem \cite{Brown} that
\begin{equation}\label{trace partitioning}
\frac{\tau(f^n_kTf^n_k)}{\tau(f^n_k)}=\frac{\int_{\rho([0,\frac{k+1}{2^n}])\backslash \rho([0,\frac{k}{2^n}])}zd\nu_T(z)}{\nu_T(\rho([0,\frac{k+1}{2^n}])\backslash \rho([0,\frac{k}{2^n}]))}\in\overline{\rm conv}(\rho((\frac{k}{2^n},\frac{k+1}{2^n}])).
\end{equation}
Now take $m>n$ and note that
$$f^n_k=\sum_{j=2^{m-n}k}^{2^{m-n}(k+1)-1}f_j^m.$$
For $2^{m-n}k\leq j< 2^{m-n}(k+1)$ such that $f^m_j\ne0$, we have
$$\frac{\tau(f^m_jTf^m_j)}{\tau(f^m_j)}\in\overline{\rm conv}(\rho((\frac{j}{2^m},\frac{j+1}{2^m}]))\subset\overline{\rm conv}(\rho((\frac{k}{2^n},\frac{k+1}{2^n}])).$$
It follows that
\begin{equation}\label{eq:EDnDm}
\|{\rm Exp}_{\mathcal{D}_n}(T)-{\rm Exp}_{\mathcal{D}_m}(T)\|_{\infty}\le\max_{0\le k<2^n}\operatorname{diam}\big(\rho((\frac{k}{2^n},\frac {k+1}{2^n}])\big)\le\omega(2^{-n})
\end{equation}
and this upper bound tends to $0$ as $n\to\infty.$
By Lemma \ref{exp limit lemma}, ${\rm Exp}_{\mathcal{D}_n}(T)$ converges in strong operator topology to ${\rm Exp}_{\mathcal{D}}(T).$ By the above estimate, it is Cauchy in the uniform norm, and, therefore, converges in that norm to ${\rm Exp}_{\mathcal{D}}(T).$ Now letting $m\to\infty$ in \eqref{eq:EDnDm} completes the proof of the lemma.
\end{proof}

\begin{lem}\label{det convergence} Let $T\in\mathcal{M}$ and let $q_t$,  $\mathcal{D}$ and $\mathcal{D}_n$ be as in Construction \ref{peano}. For every $\lambda\in\mathbb{C},$ and $\varepsilon>0$, we have
\begin{equation}\label{eq:det convergence}
\lim_{n\to\infty}\log\Delta(|{\rm Exp}_{\mathcal{D}_n}(T)-\lambda|^2+\varepsilon)=\int_{\mathbb{C}}\log(|z-\lambda|^2+\varepsilon)d\nu_T(z).
\end{equation}
\end{lem}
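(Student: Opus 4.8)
The plan is to pass the identity defining the Brown measure through the norm-approximation of Lemma~\ref{lem:Enormconv} together with the multiplicativity of the Fuglede--Kadison determinant on the diagonal subalgebras $\mathcal D_n$. First I would reduce the statement about $\Delta$ to a statement about integrals against Brown measures: since ${\rm Exp}_{\mathcal D_n}(T)$ is a normal operator (a finite linear combination of orthogonal projections with the coefficients~\eqref{trace partitioning}), its Brown measure $\nu_{{\rm Exp}_{\mathcal D_n}(T)}$ coincides with its spectral distribution, and
\[
\log\Delta\big(|{\rm Exp}_{\mathcal D_n}(T)-\lambda|^2+\varepsilon\big)=\int_{\mathbb C}\log(|z-\lambda|^2+\varepsilon)\,d\nu_{{\rm Exp}_{\mathcal D_n}(T)}(z),
\]
using $\Delta(A)=\exp(\tau(\log A))$ for the positive operator $A=|{\rm Exp}_{\mathcal D_n}(T)-\lambda|^2+\varepsilon\ge\varepsilon>0$.

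Next I would compute $\nu_{{\rm Exp}_{\mathcal D_n}(T)}$ explicitly. Because ${\rm Exp}_{\mathcal D_n}(T)=\sum_k c^n_k f^n_k$ is block diagonal with respect to the $f^n_k$, and each coefficient $c^n_k=\tau(f^n_kTf^n_k)/\tau(f^n_k)$ lies in $\overline{\rm conv}(\rho((\tfrac{k}{2^n},\tfrac{k+1}{2^n}]))$, Theorem~\ref{thm:det nu matrix} gives $\nu_{{\rm Exp}_{\mathcal D_n}(T)}=\sum_k \tau(f^n_k)\,\delta_{c^n_k}$, a finitely supported measure. On the other hand, by Theorem~\ref{thm:HS}\eqref{hsa} and the telescoping of the $q_{k/2^n}$, the same partitioning shows $\tau(f^n_k)=\nu_T(\rho([0,\tfrac{k+1}{2^n}])\setminus\rho([0,\tfrac{k}{2^n}]))$. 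Thus $\nu_{{\rm Exp}_{\mathcal D_n}(T)}$ is obtained from $\nu_T$ by collapsing the mass that $\nu_T$ puts on the ``slice'' $\rho([0,\tfrac{k+1}{2^n}])\setminus\rho([0,\tfrac{k}{2^n}])$ onto the single point $c^n_k$. Since $c^n_k$ and that slice both lie in the set $\overline{\rm conv}(\rho((\tfrac{k}{2^n},\tfrac{k+1}{2^n}]))$, whose diameter is at most $\omega(2^{-n})\to0$, the measures $\nu_{{\rm Exp}_{\mathcal D_n}(T)}$ converge weak$^*$ to $\nu_T$; quantitatively, one can couple $\nu_T$ and $\nu_{{\rm Exp}_{\mathcal D_n}(T)}$ so that a point and its image are always within $\omega(2^{-n})$ of each other.

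Finally I would combine these two ingredients. The integrand $g(z)=\log(|z-\lambda|^2+\varepsilon)$ is continuous and bounded on the ball $\{|z|\le\|T\|_\infty\}$, which contains the supports of all the measures in play (since $\|{\rm Exp}_{\mathcal D_n}(T)\|_\infty\le\|T\|_\infty$), so weak$^*$ convergence of $\nu_{{\rm Exp}_{\mathcal D_n}(T)}$ to $\nu_T$ immediately yields
\[
\int_{\mathbb C} g\,d\nu_{{\rm Exp}_{\mathcal D_n}(T)}\longrightarrow\int_{\mathbb C} g\,d\nu_T,
\]
which is exactly~\eqref{eq:det convergence}. Alternatively, and perhaps more cleanly, one can avoid Brown measures entirely on the approximant side: Lemma~\ref{lem:Enormconv} gives $\|{\rm Exp}_{\mathcal D_n}(T)-{\rm Exp}_{\mathcal D}(T)\|_\infty\le\omega(2^{-n})$, hence $|{\rm Exp}_{\mathcal D_n}(T)-\lambda|^2+\varepsilon\to|{\rm Exp}_{\mathcal D}(T)-\lambda|^2+\varepsilon$ in norm, these operators are bounded below by $\varepsilon$, and $\log$ is operator-norm continuous on $[\varepsilon,\infty)$, so $\tau(\log(\cdot))$ converges; one then separately identifies the limit $\int g\,d\nu_T$ with $\log\Delta(|{\rm Exp}_{\mathcal D}(T)-\lambda|^2+\varepsilon)$ via the weak$^*$ limit $\nu_{{\rm Exp}_{\mathcal D}(T)}=\nu_T$, which itself follows from letting $n\to\infty$ in $\nu_{{\rm Exp}_{\mathcal D_n}(T)}=\sum_k\tau(f^n_k)\delta_{c^n_k}$. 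The main obstacle is the bookkeeping in Step 2: one must verify carefully that the block-diagonal decomposition of ${\rm Exp}_{\mathcal D_n}(T)$ really does let Theorem~\ref{thm:det nu matrix} be iterated to give $\nu_{{\rm Exp}_{\mathcal D_n}(T)}=\sum_k\tau(f^n_k)\delta_{c^n_k}$ with the traces of the blocks matching the $\nu_T$-masses of the corresponding slices, and that the resulting transport estimate is uniform enough to force weak$^*$ convergence; everything else is continuity of $\log$ and boundedness of the integrand.
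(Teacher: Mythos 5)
Your proposal is correct and is essentially the paper's own argument in different packaging: the paper writes $\log\Delta(|{\rm Exp}_{\mathcal D_n}(T)-\lambda|^2+\varepsilon)=\int x_n\,d\nu_T$ with $x_n(z)=\log(|c^n_{k(z)}-\lambda|^2+\varepsilon)$ and applies dominated convergence, which is exactly your transport map $z\mapsto c^n_{k(z)}$ pushing $\nu_T$ onto $\sum_k\tau(f^n_k)\delta_{c^n_k}$, with the same key inputs ($\tau(f^n_k)=\nu_T(\text{slice})$, $c^n_k$ in the convex hull of the slice, diameter bound $\omega(2^{-n})$). The only housekeeping the paper adds that you should too is the $\nu_T$-null exceptional set of points $z$ with $f^n_{k(z)}=0$ (or $z=\rho(0)$), which does not affect the limit.
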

\begin{proof} Letting $f^n_k$ be as in the proof of Lemma~\ref{lem:Enormconv} and using~\eqref{eq:EDn}, we have
\begin{equation}\label{eq:logEDn}
\log\Delta(|{\rm Exp}_{\mathcal{D}_n}(T)-\lambda|^2+\varepsilon)=\sum_{\substack{0\le k<2^n \\ f^n_k\ne0}}\tau(f^n_k)\log(|\frac{\tau(f^n_kTf^n_k)}{\tau(f^n_k)}-\lambda|^2+\varepsilon).
\end{equation}
We now define for each $n$ a function $x_n$ on the disk $\{|z|\le\|T\|_\infty\}$.
Letting $z$ be a complex number with 
$|z|\le\|T\|_\infty$ and $z\ne\rho(0)$, we have
$z\in\rho([0,\frac{k+1}{2^n}])\backslash \rho([0,\frac{k}{2^n}])$ for some unique $k=k(z)\in\{0,\ldots,2^n-1\}$.
Indeed, selecting the minimal $t$ such that $z=\rho(t)$, we take $k$ such that $k/2^n<t\leq(k+1)/2^n$.
In the case $f^n_k\ne0$, we let
\begin{equation}\label{xn def}
x_n(z)=\log(|\frac{\tau(f^n_kTf^n_k)}{\tau(f^n_k)}-\lambda|^2+\varepsilon), 
\end{equation}
while if $z=\rho(0)$ or $f^n_{k(z)}=0$, then for specificity we set $x_n(z)=\log\varepsilon$.
(Note, however, that the set $\{\rho(0)\}\cup\{z\mid f^n_{k(z)}=0\}$ of such exceptional $z$ is a $\nu_T$--null set.)

By Theorem \ref{thm:HS}, we have $\tau(f^n_k)=\nu_T(\rho([0,\frac{k+1}{2^n}])\backslash \rho([0,\frac{k}{2^n}])).$ Hence,
using~\eqref{eq:logEDn}, we get
\begin{equation}\label{tttkkk}
\log\Delta(|{\rm Exp}_{\mathcal{D}_n}(T)-\lambda|^2+\varepsilon)=\int_{\{|z|\leq\|T\|_{\infty}\}}x_n(z)d\nu_T(z).
\end{equation}
Moreover, we clearly have
$$\log(\varepsilon)\leq x_n(z)\leq \log(\varepsilon+(|\lambda|+\|T\|_{\infty})^2) $$
for every $|z|\le\|T\|_\infty$
and, therefore,
$$\|x_n\|_{\infty}\leq\max\{|\log(\varepsilon)|,|\log(\varepsilon+(|\lambda|+\|T\|_{\infty})^2)|\}.$$

Given $z\in\rho([0,\frac{k+1}{2^n}])\backslash \rho([0,\frac{k}{2^n}])$ with $\tau(f^n_k)\ne0$,
by Theorem \ref{thm:HS} \eqref{hsc},\eqref{hsd} combined with Brown's version of Lidskii theorem \cite{Brown}, we have
$$\frac{\tau(f^n_kTf^n_k)}{\tau(f^n_k)}\in\overline{\rm conv}(\rho([0,\frac{k+1}{2^n}])\backslash \rho([0,\frac{k}{2^n}]))\subset\overline{\rm conv}(\rho((\frac{k}{2^n},\frac{k+1}{2^n}])).$$
Thus,
\begin{equation}\label{ccckkk}
|z-\frac{\tau(f^n_kTf^n_k)}{\tau(f^n_k)}|\leq{\rm diam}(\rho((\frac{k}{2^n},\frac{k+1}{2^n}])))\leq\omega(2^{-n}).
\end{equation}
Combining \eqref{ccckkk} and \eqref{xn def}, we infer that $x_n(z)$ converges to $\log(|z-\lambda|^2+\varepsilon)$
as $n\to\infty$ on a set of full $\nu_T$ measure.
The Dominated Convergence Principle now yields that the right-hand-side of~\eqref{tttkkk} tends to the right-hand-side
of~\eqref{eq:det convergence} as $n\to\infty$.
\end{proof}

Note that one could not first prove Lemma \ref{det convergence} in the case $\lambda=0$ and then infer its assertion in full generality by applying this result to the operator $T-\lambda.$ The reason is that algebra $\mathcal{D}$ for the operator $T$ differs from that fo r the operator $T-\lambda.$

The following proposition is central to this section. It proves Theorem \ref{decomposition} \eqref{decb}.

\begin{prop}\label{br equ} Let $T\in\mathcal{M}$, $\mathcal{D}$ and $\mathcal{D}_n$ be as in Construction \ref{peano}. The Brown measure of the normal operator ${\rm Exp}_{\mathcal{D}}(T)$ equals that of $T.$
\end{prop}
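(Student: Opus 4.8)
The plan is to prove that the two probability measures $\nu_{{\rm Exp}_{\mathcal{D}}(T)}$ and $\nu_T$ coincide by showing that their logarithmic potentials agree, i.e., that $\tau(\log|{\rm Exp}_{\mathcal{D}}(T)-\lambda|)=\int_{\mathbb{C}}\log|z-\lambda|\,d\nu_T(z)$ for all (equivalently, for Lebesgue-almost every) $\lambda\in\mathbb{C}$; since a Brown measure is determined by its logarithmic potential via the distributional Laplacian, this forces equality of the measures. Since ${\rm Exp}_{\mathcal{D}}(T)$ is normal, its Brown measure is just the spectral distribution of ${\rm Exp}_{\mathcal{D}}(T)$, so $\tau(\log|{\rm Exp}_{\mathcal{D}}(T)-\lambda|)=\frac12\tau(\log(|{\rm Exp}_{\mathcal{D}}(T)-\lambda|^2))$. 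The strategy is to introduce the regularizing parameter $\varepsilon>0$, establish the identity with $\log(|\cdot-\lambda|^2+\varepsilon)$ in place of $\log|\cdot-\lambda|^2$, and then let $\varepsilon\downarrow0$.

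First I would fix $\lambda\in\mathbb{C}$ and $\varepsilon>0$. By Lemma~\ref{lem:Enormconv}, ${\rm Exp}_{\mathcal{D}_n}(T)\to{\rm Exp}_{\mathcal{D}}(T)$ in operator norm, hence $|{\rm Exp}_{\mathcal{D}_n}(T)-\lambda|^2+\varepsilon\to|{\rm Exp}_{\mathcal{D}}(T)-\lambda|^2+\varepsilon$ in norm, and these operators are uniformly bounded below by $\varepsilon>0$ and above. Since $\log$ is operator-norm continuous on the set $\{A:\varepsilon\le A\le R\}$ and $\tau$ is norm continuous, it follows that $\log\Delta(|{\rm Exp}_{\mathcal{D}_n}(T)-\lambda|^2+\varepsilon)=\tau(\log(|{\rm Exp}_{\mathcal{D}_n}(T)-\lambda|^2+\varepsilon))\to\tau(\log(|{\rm Exp}_{\mathcal{D}}(T)-\lambda|^2+\varepsilon))=\log\Delta(|{\rm Exp}_{\mathcal{D}}(T)-\lambda|^2+\varepsilon)$. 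Combining this with Lemma~\ref{det convergence}, I obtain
\begin{equation}\label{eq:eps identity}
\tau\big(\log(|{\rm Exp}_{\mathcal{D}}(T)-\lambda|^2+\varepsilon)\big)=\int_{\mathbb{C}}\log(|z-\lambda|^2+\varepsilon)\,d\nu_T(z),\qquad\varepsilon>0.
\end{equation}

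Next I would let $\varepsilon\downarrow0$ in \eqref{eq:eps identity}. On the left-hand side, $\log(|{\rm Exp}_{\mathcal{D}}(T)-\lambda|^2+\varepsilon)$ decreases pointwise (in the spectral sense) to $\log(|{\rm Exp}_{\mathcal{D}}(T)-\lambda|^2)$ as $\varepsilon\downarrow0$, and $\tau$ is normal, so by monotone convergence $\tau(\log(|{\rm Exp}_{\mathcal{D}}(T)-\lambda|^2+\varepsilon))\downarrow\tau(\log(|{\rm Exp}_{\mathcal{D}}(T)-\lambda|^2))$ (a value in $[-\infty,\infty)$). On the right-hand side, $\log(|z-\lambda|^2+\varepsilon)$ decreases pointwise to $\log|z-\lambda|^2$, so by monotone convergence the integral converges to $\int_{\mathbb{C}}\log|z-\lambda|^2\,d\nu_T(z)=2\int_{\mathbb{C}}\log|z-\lambda|\,d\nu_T(z)$. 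Hence $\tau(\log|{\rm Exp}_{\mathcal{D}}(T)-\lambda|)=\int_{\mathbb{C}}\log|z-\lambda|\,d\nu_T(z)$ for every $\lambda\in\mathbb{C}$; that is, the logarithmic potential of the spectral distribution of the normal operator ${\rm Exp}_{\mathcal{D}}(T)$ equals that of $\nu_T$. By the defining property \eqref{br def} of the Brown measure (applied to the normal operator ${\rm Exp}_{\mathcal{D}}(T)$, whose Brown measure is its spectral distribution) and the fact that a compactly supported measure is uniquely determined by its logarithmic potential (take distributional Laplacians), I conclude $\nu_{{\rm Exp}_{\mathcal{D}}(T)}=\nu_T$.

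The main obstacle is purely the handling of the potential singularity of $\log|{\rm Exp}_{\mathcal{D}}(T)-\lambda|$: one must make sure the $\varepsilon\downarrow0$ limits are valid even when $\lambda$ lies in the spectrum of ${\rm Exp}_{\mathcal{D}}(T)$, which is why I phrase everything through monotone convergence (both for $\tau$, using normality, and for the integral against $\nu_T$), allowing the common value $-\infty$ on a set of $\lambda$ of Lebesgue measure zero without harm. The rest — norm continuity of the functional calculus away from $0$, and the uniqueness of a measure given its logarithmic potential — is standard. Note that, as remarked in the paper just before this proposition, one cannot reduce to the case $\lambda=0$, since the algebra $\mathcal{D}$ depends on $T$; this is why Lemma~\ref{det convergence} was proved directly for all $\lambda$.
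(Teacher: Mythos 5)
Your proposal is correct and follows essentially the same route as the paper: fix $\varepsilon>0$, pass to the limit in $n$ using Lemma~\ref{lem:Enormconv} together with norm-continuity of the Fuglede--Kadison determinant on operators bounded below (the paper simply cites \cite{FK} for this), combine with Lemma~\ref{det convergence}, then let $\varepsilon\downarrow0$ by monotone convergence and take distributional Laplacians. The only difference is cosmetic: you spell out the determinant-continuity step via the functional calculus rather than quoting it.
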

\begin{proof} Fix $\varepsilon>0.$ By Lemma \ref{lem:Enormconv}, we have that ${\rm Exp}_{\mathcal{D}_n}(T)$ converges in norm to ${\rm Exp}_{\mathcal{D}}(T)$ as $n\to\infty.$ Since the Fuglede-Kadison determinant is continuous with respect to the uniform norm topology on the set of invertible elements (see~\cite{FK}), for every $\lambda\in\mathbb{C}$ we have
$$\log\Delta(|{\rm Exp}_{\mathcal{D}_n}(T)-\lambda|^2+\varepsilon)\to\log(\Delta(|{\rm Exp}_{\mathcal{D}}(T)-\lambda|^2+\varepsilon)).$$
On the other hand, it follows from Lemma \ref{det convergence} that
$$\log\Delta(|{\rm Exp}_{\mathcal{D}_n}(T)-\lambda|^2+\varepsilon)\to\int_{\mathbb{C}}\log(|z-\lambda|^2+\varepsilon)d\nu_T(z)$$
for every $\lambda\in\mathbb{C}.$ Hence,
$$\log(\Delta(|{\rm Exp}_{\mathcal{D}}(T)-\lambda|^2+\varepsilon))=\int_{\mathbb{C}}\log(|z-\lambda|^2+\varepsilon)d\nu_T(z)$$
for every $\lambda\in\mathbb{C}.$ Letting $\varepsilon\to0,$ we infer from the Monotone Convergence Principle that
$$\int_{\mathbb{C}}\log(|z-\lambda|)d\nu_{{\rm Exp}_{\mathcal{D}}(T)}(z)=\int_{\mathbb{C}}\log(|z-\lambda|)d\nu_T(z)$$
for every $\lambda\in\mathbb{C}.$ The assertion follows by taking Laplacians of both sides.
\end{proof}

It is tempting to try to infer Theorem \ref{decomposition} from Proposition \ref{br equ} by applying its assertion to the operator $T-{\rm Exp}_{\mathcal{D}}(T).$ This is impossible because the algebra $\mathcal{D}$ for the operator $T$ differs from that for the operator $T-{\rm Exp}_{\mathcal{D}}(T).$

\section{Decomposition}

The following submajorization result is related to the Weyl lemma stating that $\lambda(T)\prec\prec_{\log}\mu(T)$ for every compact operator $T\in B(H)$ (see, e.g., Theorem II.3.1 of \cite{GK1}).

We continue to assume that $\mathcal{M}$ is a diffuse, finite von Neumann algebra and that $\tau$ is a normal, faithful, tracial state
on $\mathcal{M}$.

\begin{lem}\label{lem:anytrace} Let $T\in\mathcal{M}$ and let $p\in\mathcal{M}$ be a projection such that $Tp=pTp.$ Then
$$Tp+(1-p)T\prec\prec_{\log}T.$$
\end{lem}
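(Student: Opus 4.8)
The plan is to prove, for each $t\in(0,1)$, that $\int_0^t\log\mu(s,Tp+(1-p)T)\,ds\le\int_0^t\log\mu(s,T)\,ds$; if the left side is $-\infty$ there is nothing to prove, so I assume it finite. Write $A=pTp=Tp$, $C=(1-p)T(1-p)=(1-p)T$ and $B=pT(1-p)$, so that with respect to $1=p+(1-p)$ one has
\[
T=\begin{pmatrix}A&B\\0&C\end{pmatrix},\qquad Y:=Tp+(1-p)T=\begin{pmatrix}A&0\\0&C\end{pmatrix},
\]
and in particular $|Y|=|A|\oplus|C|$ is block diagonal.

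The first ingredient is a compression estimate. If $X\in\mathcal{M}$ and $V\in\mathcal{M}$ is a partial isometry with $V^*V=e$, then $(V^*Xe)^*(V^*Xe)=eX^*(VV^*)Xe\le eX^*Xe$ and $(Xe)(Xe)^*=XeX^*\le XX^*$, so that $\mu(s,V^*Xe)\le\mu(s,X)$ for all $s$. As $V^*Xe\in e\mathcal{M}e$, the first equality below is immediate from the definitions of $\Delta_{e\mathcal{M}e}$ and $\mu$, and the inequality then follows:
\[
\tau(e)\log\Delta_{e\mathcal{M}e}(V^*Xe)=\int_0^{\tau(e)}\log\mu(s,V^*Xe)\,ds\le\int_0^{\tau(e)}\log\mu(s,X)\,ds .
\]
What makes this useful — and what I expect to be the crux — is that the plain compression $eXe$ does \emph{not} in general satisfy $\tau(e)\log\Delta_{e\mathcal{M}e}(eXe)=\int_0^{\tau(e)}\log\mu(s,X)\,ds$ when the diagonal blocks of $X$ are non-normal (it can fail already in $M_2$), so the partial isometry $V$ must be chosen so as to ``unwind'' those blocks by their polar parts.

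I then choose, using diffuseness of $\mathcal{M}$, spectral projections $e_1\le p$ of $|A|$ and $e_2\le1-p$ of $|C|$ onto ``top'' parts of their spectra, arranged so that $e:=e_1+e_2$ is a top-$t$ spectral projection of $|Y|$: $\tau(e)=t$, and the spectrum of $e|Y|e$ in $e\mathcal{M}e$ lies in $[\mu(t,Y),\infty)$. Finiteness of the left side forces $\mu(t,Y)>0$, hence $e_1\le\operatorname{supp}|A|$ and $e_2\le\operatorname{supp}|C|$. By the definition of $\mu$ together with functional calculus, $\tau(e)\log\Delta_{e\mathcal{M}e}(e|Y|e)=\int_0^t\log\mu(s,Y)\,ds$. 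With polar decompositions $A=V_A|A|$, $C=V_C|C|$ ($V_A\in p\mathcal{M}p$, $V_C\in(1-p)\mathcal{M}(1-p)$), set $W=V_Ae_1+V_Ce_2$. Using $pV_A=V_A$, $V_C(1-p)=V_C$, $pB=B=B(1-p)$, $e_1\le p$, $e_2\le1-p$, and $V_A^*A=(\operatorname{supp}|A|)\,|A|=|A|$ (similarly for $C$), a direct calculation gives $W^*W=e$ and, with respect to $e_1,e_2$,
\[
W^*Te=\begin{pmatrix}e_1|A|e_1&e_1V_A^*Be_2\\0&e_2|C|e_2\end{pmatrix},\qquad W^*Ye=e|Y|e=\begin{pmatrix}e_1|A|e_1&0\\0&e_2|C|e_2\end{pmatrix}.
\]
Thus $W^*Te$ is block upper triangular with the same diagonal blocks as $e|Y|e$, so Theorem~\ref{thm:det nu matrix}, applied inside $e\mathcal{M}e$ with the projection $e_1$ (and with the convention $\Delta_{\{0\}}(0)^0=1$ if $e_1=0$ or $e_2=0$), yields $\Delta_{e\mathcal{M}e}(W^*Te)=\Delta_{e\mathcal{M}e}(e|Y|e)$.

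Assembling the above,
\[
\int_0^t\log\mu(s,Y)\,ds=\tau(e)\log\Delta_{e\mathcal{M}e}(e|Y|e)=\tau(e)\log\Delta_{e\mathcal{M}e}(W^*Te)\le\int_0^t\log\mu(s,T)\,ds ,
\]
the last step being the compression estimate with $X=T$ and $V=W$; since $t\in(0,1)$ was arbitrary, $Tp+(1-p)T\prec\prec_{\log}T$. The genuine difficulty is finding and verifying the twist $W$: the identity $W^*Ye=e|Y|e$ makes the diagonal part of $W^*Te$ exactly the positive operator whose determinant computes $\int_0^t\log\mu(s,Y)\,ds$, while block triangularity of $W^*Te$ keeps its determinant independent of $B$, and it is this cancellation that makes the estimate tight. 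A secondary, routine point is the selection of the top-$t$ spectral projection of $|Y|$ compatibly with $1=p+(1-p)$ and inside $\operatorname{supp}|Y|$, which uses diffuseness of $\mathcal{M}$ and the reduction to the finite case.
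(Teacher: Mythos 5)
Your proof is correct, and while it follows the same overall strategy as the paper's --- fix $t$, select ``top'' spectral pieces of the two diagonal blocks of total trace $t$, and use multiplicativity of the Fuglede--Kadison determinant across the upper-triangular $2\times 2$ decomposition (Theorem~\ref{thm:det nu matrix}) to make the off-diagonal block $B$ irrelevant --- the crucial singular-value comparison is implemented by a genuinely different mechanism. The paper replaces $A$ and $C$ by \emph{positive} truncations $A_0=\int_0^{t_1}\mu(s,A)\,dp_s$ (a piece of $|A^*|$) and $C_0=\int_0^{t_2}\mu(s,C)\,dq_s$ (a piece of $|C|$), and proves $\mu(T_0)\le\mu(T)$ for $T_0=\left(\begin{smallmatrix}A_0&B\\0&C_0\end{smallmatrix}\right)$ by two operator inequalities, one on $X^*X$ to insert $C_0$ and one on $XX^*$ to insert $A_0$, before compressing by $r$. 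You instead absorb the polar parts of $A$ and $C$ into a single partial isometry $W=V_Ae_1+V_Ce_2$ and invoke the elementary estimate $\mu(s,W^*Te)\le\mu(s,T)$; this handles both blocks in one stroke and makes it transparent that the diagonal of $W^*Te$ is exactly $e|Y|e$, whose determinant computes $\int_0^t\log\mu(s,Y)\,ds$. One small imprecision: finiteness of $\int_0^t\log\mu(s,Y)\,ds$ does \emph{not} force $\mu(t,Y)>0$ (one can have $\mu(t,Y)=0$ with $\tau(\operatorname{supp}|Y|)=t$ and the integral still convergent). What finiteness does force is $\tau(\operatorname{supp}|Y|)\ge t$; since $\mu(t,Y)=0$ implies $\tau(\operatorname{supp}|Y|)\le t$, in that boundary case you take $e=\operatorname{supp}|Y|=\operatorname{supp}|A|+\operatorname{supp}|C|$, and the properties you actually use, namely $e_1\le\operatorname{supp}|A|$ and $e_2\le\operatorname{supp}|C|$, still hold. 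With that one-line patch the argument is complete.
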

\begin{proof}
Let $S=Tp+(1-p)T.$ Writing elements of $\mathcal{M}$ as matrices with respect to projections $p$ and $(1-p),$ we have
$$T=\left(\begin{matrix}A&B\\0&C\end{matrix}\right),\qquad S=\left(\begin{matrix}A&0\\0&C\end{matrix}\right),$$
with $A=Tp$ and $C=(1-p)T.$ By Lemma~\ref{boolean lemma}, there exist increasing nets $p_s\le p,$ $0\leq s\leq\tau(p),$ and $q_s\le 1-p,$ $0\leq s\leq\tau(1-p),$ of projections in $\mathcal{M}$ such that $\tau(p_s)=s$, $\tau(q_s)=s$ and such that
$$|C|=\int_0^{\tau({\rm supp}(C))}\mu(s,C)dq_s,\quad |A^*|=\int_0^{\tau({\rm supp}(A))}\mu(s,A)dp_s.$$

Choosing appropriate spectral projections of $A$ and $C$, we see that
for every $t>0,$ there exist $t_1,t_2>0$ such that $t_1+t_2=t$ and such that
\begin{equation}\label{main inequality}
\int_0^t\log(\mu(s,S))ds=\int_0^{t_1}\log(\mu(s,A))ds+\int_0^{t_2}\log(\mu(s,C))ds.
\end{equation}
Set
$$A_0=\int_0^{t_1}\mu(s,A)dp_s,\quad C_0=\int_0^{t_2}\mu(s,C)dq_s$$
and 
$$T_0=\begin{pmatrix}
A_0&B\\
0&C_0
\end{pmatrix},\quad S_0=\begin{pmatrix}
A_0&0\\
0&C_0
\end{pmatrix}.$$
Note that
$$\int_0^t\log(\mu(s,S))ds=\int_0^t\log(\mu(s,S_0))ds.$$

We now claim that $\mu(T_0)\leq\mu(T).$ Indeed,
$$\begin{pmatrix}
A&B\\
0&C
\end{pmatrix}^*\begin{pmatrix}
A&B\\
0&C
\end{pmatrix}=\begin{pmatrix}
A^*A&A^*B\\
B^*A&B^*B+C^*C
\end{pmatrix}\geq\begin{pmatrix}
A&B\\
0&C_0
\end{pmatrix}^*\begin{pmatrix}
A&B\\
0&C_0
\end{pmatrix}$$
and
$$\begin{pmatrix}
A&B\\
0&C_0
\end{pmatrix}\begin{pmatrix}
A&B\\
0&C_0
\end{pmatrix}^*=\begin{pmatrix}
AA^*+BB^*&BC_0\\
C_0B&C_0^2
\end{pmatrix}\geq\begin{pmatrix}
A_0&B\\
0&C_0
\end{pmatrix}\begin{pmatrix}
A_0&B\\
0&C_0
\end{pmatrix}^*.$$
Therefore,
$$\mu(T)=\mu(\begin{pmatrix}
A&B\\
0&C
\end{pmatrix})\geq \mu(\begin{pmatrix}
A&B\\
0&C_0
\end{pmatrix})\geq\mu(\begin{pmatrix}
A_0&B\\
0&C_0
\end{pmatrix})=\mu(T_0).$$

Let now $r=p_{\tau({\rm supp}(C_0))}+q_{\tau({\rm supp}(A_0))}.$ Using~\eqref{det}, we get
$$\int_0^t\log(\mu(s,S))ds=\int_0^t\log(\mu(s,S_0))ds=\log(\Delta_{r\mathcal{M}r}(S_0))$$
and, since $\mu(rT_0r)\le\mu(T_0)\le\mu(T)$, we get
$$\log(\Delta_{r\mathcal{M}r}(rT_0r))\stackrel{\eqref{det}}{=}\int_0^t\log(\mu(s,rT_0r))ds\leq \int_0^t\log(\mu(s,T))ds.$$
It follows now from Theorem~\ref{thm:det nu matrix} that $\Delta_{r\mathcal{M}r}(S_0)=\Delta_{r\mathcal{M}r}(rT_0r)$ and, therefore,
$$\int_0^t\log(\mu(s,S))ds\leq\int_0^t\log(\mu(s,T))ds.$$
\end{proof}

\begin{cor}\label{standard} Let $T\in\mathcal{M}$ and let $p\in\mathcal{M}$ be a projection such that $Tp=pTp.$ Then
$$\Delta(1+\big|Tp+(1-p)T)\big|^2)\le\Delta(1+|T|^2).$$
\end{cor}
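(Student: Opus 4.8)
The plan is to deduce Corollary~\ref{standard} directly from Lemma~\ref{lem:anytrace} together with Lemma~\ref{lem:log invertibles}. Write $S = Tp + (1-p)T$. By Lemma~\ref{lem:anytrace} we have $S \prec\prec_{\log} T$, i.e., $\int_0^t \log\mu(s,S)\,ds \le \int_0^t \log\mu(s,T)\,ds$ for all $t \in (0,1)$. The goal is an inequality between $\Delta(1 + |S|^2)$ and $\Delta(1+|T|^2)$, and since $\Delta(X) = \exp(\tau(\log X))$ for positive invertible $X$, this is equivalent to $\tau(\log(1+|S|^2)) \le \tau(\log(1+|T|^2))$, equivalently $\int_0^1 \log(1 + \mu(s,S)^2)\,ds \le \int_0^1 \log(1+\mu(s,T)^2)\,ds$, using that $\mu(s,|X|^2) = \mu(s,X)^2$ and $\mu$ of a positive operator gives its "eigenvalue function."

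First I would record the elementary fact that logarithmic submajorization passes through the map $x \mapsto \log(1+x^2)$ on $[0,\infty)$ in the following sense: if $f,g:[0,1)\to[0,\infty)$ are nonincreasing with $\int_0^t \log f \le \int_0^t \log g$ for all $t$, then $\int_0^1 \log(1+f^2) \le \int_0^1 \log(1+g^2)$. The key observation is that the function $\psi(u) = \log(1 + e^{2u})$ is increasing and convex on $\mathbb{R}$ (its derivative $2e^{2u}/(1+e^{2u})$ is increasing), so $\psi \circ \log$ is increasing and concave... — here I must be careful about the direction. The standard tool is: logarithmic submajorization is equivalent to ordinary submajorization after composing with $\log$, provided we truncate below by $1$. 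Concretely, since the statement concerns $1 + |S|^2 \ge 1$ and $1 + |T|^2 \ge 1$, I would instead argue as follows. By Lemma~\ref{lem:log invertibles}, $S \prec\prec_{\log} T$ implies (after noting $\max(1,\mu(s,S)) \prec\prec_{\log} \max(1, C\mu(s,T))$ is not immediate), so the cleanest route is: logarithmic submajorization $\mu(S) \prec\prec_{\log} \mu(T)$ implies $\mu(S)^2 \prec\prec_{\log} \mu(T)^2$ (square both sides inside the integral — valid since $2\log\mu(s,S)$ integrates correctly), hence $1 \vee \mu(S)^2 \prec\prec_{\log} 1 \vee \mu(T)^2$ is false in general, but $\log(1+\mu(S)^2) \le \log(1 + \mu(T)^2)$ in the submajorization sense follows from applying Theorem~\ref{hlp theorem}-type reasoning to the convex increasing function $\Phi(u) = \log(1+e^u)$ composed appropriately.

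Let me restate the main step cleanly, since this is where the real content lies. Set $a(s) = \log\mu(s,|S|^2) = 2\log\mu(s,S)$ and $b(s) = 2\log\mu(s,T)$, both nonincreasing on $(0,1)$, with $\int_0^t a \le \int_0^t b$ for all $t$ — this is exactly $|S|^2 \prec\prec_{\log} |T|^2$, which follows from $S\prec\prec_{\log}T$ by Lemma~\ref{lem:log invertibles} (scaling is irrelevant; squaring the singular values doubles the logarithm, preserving the integral inequalities). Now the function $\Psi(u) = \log(1 + e^u)$ is increasing and convex on $\mathbb{R}$, so $\Psi(a) \prec\prec \Psi(b)$ as functions (the general fact that $a\prec\prec b$ with $a,b$ nonincreasing and $\Psi$ increasing convex gives $\Psi(a)\prec\prec\Psi(b)$, which is the function form of Theorem~\ref{hlp theorem} / Hardy--Littlewood--Polya). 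In particular $\int_0^1 \Psi(a(s))\,ds \le \int_0^1 \Psi(b(s))\,ds$, which is precisely $\tau(\log(1+|S|^2)) \le \tau(\log(1+|T|^2))$, and exponentiating gives the claim $\Delta(1+|S|^2) \le \Delta(1+|T|^2)$.

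The main obstacle — really the only non-bookkeeping point — is justifying that $\Psi(u) = \log(1+e^u)$ being increasing and convex, combined with the logarithmic submajorization $|S|^2 \prec\prec_{\log} |T|^2$, yields $\int_0^1 \log(1+\mu(s,S)^2)\,ds \le \int_0^1\log(1+\mu(s,T)^2)\,ds$; this is an instance of the principle that an increasing convex function of a logarithmically-submajorized pair preserves the integral inequality over $[0,1]$, which can be seen as the function-theoretic version of Theorem~\ref{hlp theorem} applied after the substitution that turns $\log\mu$ into $\mu$ (or cited directly from the Hardy--Littlewood--Polya circle of results referenced before Theorem~\ref{hlp theorem}). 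One should double-check the integrability at $s\to 0$: $\mu(s,T)$ may blow up, but $\log(1+\mu(s,T)^2) \le 2\log\mu(s,T) + \log 2$ for $\mu \ge 1$ and the right side is $\tau(\log|T|^2) + $ const $ < \infty$ since $\log$ is operator-integrable against the faithful normal trace whenever $T$ is bounded with bounded inverse — but here $T$ need not be invertible, so one truncates: $\log(1+|T|^2) \le \log 2 + \log(1 \vee |T|^2) \le \log 2 + 2(\log|T|)_+$, and $\tau((\log|T|)_+) \le \tau(\log(1+|T|)) < \infty$, keeping everything finite. These are routine and I would dispatch them in a sentence.
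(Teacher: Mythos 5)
Your proposal follows the same route as the paper's proof: reduce the claim to the scalar inequality $\int_0^1\log(1+\mu(s,S)^2)\,ds\le\int_0^1\log(1+\mu(s,T)^2)\,ds$ and derive it from $S\prec\prec_{\log}T$ (Lemma~\ref{lem:anytrace}) by a Hardy--Littlewood--P\'olya argument applied to the increasing convex function $z\mapsto\log(1+e^{2z})$. The idea is right, but the step you yourself call ``the only non-bookkeeping point'' is exactly where you are vaguest, and it is exactly what the paper's proof is devoted to. Theorem~\ref{hlp theorem} as stated applies to operators, i.e.\ to nonnegative decreasing functions and to $\Phi$ increasing convex on $[0,\infty)$; your functions $a=2\log\mu(\cdot,S)$ and $b=2\log\mu(\cdot,T)$ are real-valued, can be unbounded below, and equal $-\infty$ wherever $\mu$ vanishes (which happens on a set of positive measure whenever $S$ or $T$ fails to be injective). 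So you cannot literally cite Theorem~\ref{hlp theorem} for them; what you need is the extension of HLP to $\mathbb{R}$-valued decreasing functions with $\Psi$ increasing convex on $\mathbb{R}$. That extension is true, and survives the $-\infty$ values because one only ever integrates over level sets $\{a>t\}$ on which $a$ is bounded below, but you neither prove it nor reduce it to the stated theorem. The paper's device is precisely such a reduction: truncate to $(0,\alpha-\frac1n)$, where $\alpha$ is the infimum of the zero set of $\mu(\cdot,S)$, so that both singular value functions are bounded away from $0$; rescale by $\varepsilon_n^{-1}$ so that the logarithms become nonnegative; apply Theorem~\ref{hlp theorem} with $\Phi_n(z)=\log(1+\varepsilon_n^2e^{2z})$ on $[0,\infty)$; and pass to the limit. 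Finally, your ``routine'' integrability check is aimed at the wrong end of the interval: $\mu(s,T)\le\|T\|_\infty$ for a bounded operator, so nothing blows up as $s\to0$; the danger is at the other end, where $\mu$ can vanish and $\log\mu=-\infty$.
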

\begin{proof} Set $y=\mu(Tp+(1-p)T)$ and $x=\mu(T).$ We may without loss of generality assume $y$ is not identically $0.$ By Lemma \ref{lem:anytrace}, we have $y\prec\prec_{\log}x.$ Set $\alpha$ to be the infimum of the set $y^{-1}(\{0\}).$ Then we must have $\inf x^{-1}(\{0\})\ge\alpha.$ Let $\varepsilon_n=\min\{y(\alpha-\frac1n),x(\alpha-\frac1n)\}$ for all integers $n$ so large that $\frac1n<\alpha.$ Then the functions
$$y_n=\log(\varepsilon_n^{-1}y)\chi_{(0,\alpha-1/n)},\quad x_n=\log(\varepsilon_n^{-1}x)\chi_{(0,\alpha-1/n)},$$
when nonzero, take only values $\ge0.$ Clearly, $y_n\prec\prec x_n.$ Since the function $\Phi_n:z\to\log(1+\varepsilon_n^2e^{2z})$ is convex on $[0,\infty),$ it follows from Theorem \ref{hlp theorem} that
\begin{multline*}
\int_0^{\alpha-1/n}\log(1+y^2(s))ds=\int_0^1\Phi_n(y_n(s))ds-(1-\alpha+\frac1n)\log(1+\varepsilon_n^2)\\
\leq\int_0^{\alpha-1/n}\Phi_n(x_n(s))ds-(1-\alpha+\frac1n)\log(1+\varepsilon_n^2)=\int_0^{\alpha-1/n}\log(1+x^2(s))ds.
\end{multline*}
Letting $n\to\infty,$ we obtain
\begin{multline*}
\int_0^1\log(1+y^2(s))ds=\int_0^{\alpha}\log(1+y^2(s))ds \\
\leq\int_0^{\alpha}\log(1+x^2(s))ds\leq\int_0^1\log(1+x^2(s))ds.
\end{multline*}
Now \eqref{det} finishes the proof.
\end{proof}

The following lemma is easy and we omit the proof.

\begin{lem}\label{monotone triv} If a scalar sequence $\{a_{n,m}\}_{n,m\geq1}$ is decreasing in both arguments, then
$$\lim_{n\to\infty}\lim_{m\to\infty}a_{n,m}=\lim_{m\to\infty}\lim_{n\to\infty}a_{n,m}.$$
\end{lem}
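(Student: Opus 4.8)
The plan is to show that both iterated limits coincide with the single quantity $\inf_{n,m\ge1}a_{n,m}$ (interpreted in the extended reals), which immediately forces them to be equal, so that no delicate $\varepsilon$--juggling is needed at all.

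First, for each fixed $n$ the sequence $m\mapsto a_{n,m}$ is decreasing, so $b_n:=\lim_{m\to\infty}a_{n,m}$ exists and equals $\inf_{m\ge1}a_{n,m}$. I would then observe that $(b_n)_{n\ge1}$ is itself decreasing: if $n'\ge n$ then $a_{n',m}\le a_{n,m}$ for every $m$, hence $b_{n'}=\inf_m a_{n',m}\le\inf_m a_{n,m}=b_n$. Consequently
$$\lim_{n\to\infty}\lim_{m\to\infty}a_{n,m}=\lim_{n\to\infty}b_n=\inf_{n\ge1}b_n=\inf_{n\ge1}\inf_{m\ge1}a_{n,m}=\inf_{n,m\ge1}a_{n,m},$$
where the last equality is the trivial fact that an iterated infimum over a product of index sets equals the infimum over the product. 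Running the identical argument with the roles of $n$ and $m$ interchanged yields $\lim_{m\to\infty}\lim_{n\to\infty}a_{n,m}=\inf_{n,m\ge1}a_{n,m}$ as well, and comparing the two expressions completes the proof.

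The only point meriting a remark is that a decreasing real sequence need not be bounded below, so these limits and infima should be read in $[-\infty,\infty)$; this is harmless, since infima of subsets of $[-\infty,\infty)$ are always defined and the interchange $\inf_n\inf_m=\inf_{n,m}=\inf_m\inf_n$ requires no boundedness hypothesis. There is accordingly no genuine obstacle here—as the text indicates, the statement is elementary, its entire content being the observation that monotone sequences converge to their infima together with the associativity of infima over a product index set.
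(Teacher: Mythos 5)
Your proof is correct and complete: identifying both iterated limits with the joint infimum $\inf_{n,m}a_{n,m}$ (valued in $[-\infty,\infty)$) is exactly the standard argument, and the paper itself omits the proof as trivial, so there is nothing to compare against. Your remark about unboundedness below is a sensible precaution, though in the paper's application the quantities are Fuglede--Kadison determinants of operators bounded below by $1/m$, hence nonnegative.
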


The following lemmas make up the heart of the proof of Theorem \ref{decomposition}.
In the next two lemmas, $\mathcal{D}_n':=\mathcal{M}\cap(\mathcal{D}_n)'$
and $\mathcal{D}':=\mathcal{M}\cap\mathcal{D}'$ mean the relative commutants of the respective algebras in $\mathcal{M}$.

\begin{lem}\label{vspom lemma} Let $p_t\in\mathcal{M},$ $t\in[0,1],$ be an increasing net of projections with $p_0=0$ and $p_1=1$.
Let $m$ and $n$ be positive integers
and let $\mathcal{D}_n$ be the von Neumann subalgebra generated by $p_{k/2^n},$ $0\leq k\leq 2^n.$ If $T\in\mathcal{M}$ and $Tp_t=p_tTp_t$ for all $t\in[0,1]$, then
$$\Delta(|{\rm Exp}_{\mathcal{D}_n'}(T)|^2+\frac1m)\geq \Delta(|{\rm Exp}_{\mathcal{D}_{n+1}'}(T)|^2+\frac1m).$$
\end{lem}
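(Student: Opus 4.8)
The plan is to recognise that the relative commutant $\mathcal{D}_n'$ is a block-diagonal subalgebra and that the asserted inequality is an application of Corollary~\ref{standard} inside each block. Set $e_k=p_{(k+1)/2^n}-p_{k/2^n}$ for $0\le k<2^n$; these are pairwise orthogonal projections with $\sum_k e_k=1$, and $\mathcal{D}_n$ is the finite-dimensional abelian algebra they span. Hence $\mathcal{D}_n'=\bigoplus_k e_k\mathcal{M}e_k$, and the conditional expectation onto this block-diagonal subalgebra is the corresponding pinching map, so ${\rm Exp}_{\mathcal{D}_n'}(T)=\sum_k e_kTe_k$. Likewise, under the refinement $n\to n+1$ one has $e_k=f_{2k}+f_{2k+1}$ with $f_{2k}=p_{(2k+1)/2^{n+1}}-p_{2k/2^{n+1}}$ and $f_{2k+1}=e_k-f_{2k}$, and ${\rm Exp}_{\mathcal{D}_{n+1}'}(T)=\sum_j f_jTf_j$.

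The key point is that each block compression inherits the triangularity. Writing $u=p_{(2k+1)/2^{n+1}}$, we have $f_{2k}\le u$ and $f_{2k+1}u=0$, so, using $Tu=uTu$,
\[
f_{2k+1}Tf_{2k}=f_{2k+1}(Tu)f_{2k}=f_{2k+1}u\,(Tu)\,f_{2k}=0 .
\]
Thus, with $A_k:=e_kTe_k\in e_k\mathcal{M}e_k$, the subprojection $f_{2k}$ is invariant for $A_k$ in $e_k\mathcal{M}e_k$, i.e.\ $A_kf_{2k}=f_{2k}A_kf_{2k}$, and moreover $A_kf_{2k}+f_{2k+1}A_k=f_{2k}Tf_{2k}+f_{2k+1}Tf_{2k+1}$ is precisely the $e_k$-component of ${\rm Exp}_{\mathcal{D}_{n+1}'}(T)$.

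Next I would apply Corollary~\ref{standard} inside the von Neumann algebra $e_k\mathcal{M}e_k$ (which is diffuse and finite, being a corner of $\mathcal{M}$) to the operator $\sqrt m\,A_k$ with the invariant projection $f_{2k}$, obtaining
\[
\Delta_{e_k\mathcal{M}e_k}\big(e_k+m\,|A_kf_{2k}+f_{2k+1}A_k|^2\big)\le\Delta_{e_k\mathcal{M}e_k}\big(e_k+m\,|A_k|^2\big);
\]
dividing both sides by $m$ (via $\Delta(cX)=c\,\Delta(X)$ for $c>0$) turns this into
\[
\Delta_{e_k\mathcal{M}e_k}\big(|A_kf_{2k}+f_{2k+1}A_k|^2+\tfrac1m e_k\big)\le\Delta_{e_k\mathcal{M}e_k}\big(|A_k|^2+\tfrac1m e_k\big).
\]
Since the operators $A_k$ (resp.\ the $A_kf_{2k}+f_{2k+1}A_k$) live in mutually orthogonal corners, both $|{\rm Exp}_{\mathcal{D}_n'}(T)|^2+\frac1m$ and $|{\rm Exp}_{\mathcal{D}_{n+1}'}(T)|^2+\frac1m$ are positive invertible operators that are block-diagonal with respect to $\{e_k\}$; iterating the determinant formula of Theorem~\ref{thm:det nu matrix} (with the convention $\Delta_{\{0\}}(0)^0=1$ on vanishing blocks) expresses $\log\Delta_\mathcal{M}$ of each as the sum over $k$ of $\tau(e_k)\log\Delta_{e_k\mathcal{M}e_k}$ of the corresponding block. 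Summing the displayed per-block inequalities against the nonnegative weights $\tau(e_k)$ and exponentiating yields the claim.

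I expect the main obstacle to be structural rather than analytic: the proof hinges on identifying $\mathcal{D}_n'$ with the block-diagonal algebra $\bigoplus_k e_k\mathcal{M}e_k$ and on the elementary computation $f_{2k+1}Tf_{2k}=0$, which says that the triangularity hypothesis $Tp_t=p_tTp_t$ survives passage to each diagonal block under refinement. Once this is in place, the estimate reduces block by block to Corollary~\ref{standard} together with the multiplicativity of the Fuglede--Kadison determinant over orthogonal blocks, and the only care needed is in the harmless rescaling by $m$ and in the bookkeeping of possibly vanishing blocks.
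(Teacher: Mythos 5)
Your proposal is correct and follows essentially the same route as the paper: identify ${\rm Exp}_{\mathcal{D}_n'}$ and ${\rm Exp}_{\mathcal{D}_{n+1}'}$ as pinchings, observe that the triangularity passes to each block under refinement (so that $f_{2k}$ is invariant for $e_kTe_k$), apply Corollary~\ref{standard} in each corner $e_k\mathcal{M}e_k$, and combine via the multiplicativity of the Fuglede--Kadison determinant over the blocks. Your explicit $\sqrt m$--rescaling and the verification $f_{2k+1}Tf_{2k}=0$ are details the paper leaves implicit, but the argument is the same.
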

\begin{proof} We let $f_n^k=p_{(k+1)/2^n}-p_{k/2^n},$ $0\leq k<2^n$ and similarly $f_{n+1}^k=p_{(k+1)/2^{n+1}}-p_{k/2^{n+1}},$ $0\leq k<2^{n+1}$ .
For an arbitrary $X\in\mathcal{M}$, we have
\[
{\rm Exp}_{\mathcal{D}_n'}(X)=\sum_{k=0}^{2^n-1}f_n^kXf_n^k,\qquad
{\rm Exp}_{\mathcal{D}_{n+1}'}(X)=\sum_{k=0}^{2^{n+1}-1}f_{n+1}^kXf_{n+1}^k.
\]
Note that $T$ is upper--triangular with respect to the list of projections $(f_n^k)_{0\le k<2^n}$ and $(f_{n+1}^k)_{0\le k<2^{n+1}}$
and, in particular,
$f_{n+1}^{2k}$ is $f_n^kTf_n^k$-invariant and we may write
$$f_n^kTf_n^k=\begin{pmatrix}
f_{n+1}^{2k+1}Tf_{n+1}^{2k+1}&f_{n+1}^{2k}Tf_{n+1}^{2k+1}\\
0&f_{n+1}^{2k}Tf_{n+1}^{2k}
\end{pmatrix}.$$
It follows from Corollary \ref{standard} and Theorem~\ref{thm:det nu matrix} that
\begin{multline*}
\Delta_{f_{n+1}^{2k+1}\mathcal{M}f_{n+1}^{2k+1}}(|f_{n+1}^{2k+1}Tf_{n+1}^{2k+1}|^2+\frac1m)^{\tau(f_{n+1}^{2k+1})}\Delta_{f_{n+1}^{2k}\mathcal{M}f_{n+1}^{2k}}(|f_{n+1}^{2k}Tf_{n+1}^{2k}|^2+\frac1m)^{\tau(f_{n+1}^{2k})} \\
\leq\Delta_{f_n^k\mathcal{M}f_n^k}(|f_n^kTf_n^k|^2+\frac1m)^{\tau(f^k_n)}.
\end{multline*}
It follows now from Theorem~\ref{thm:det nu matrix} that
\begin{multline*}
\Delta(|{\rm Exp}_{\mathcal{D}_n'}(T)|^2+\frac1m)=\prod_{k=0}^{2^n-1}\Delta_{f_n^k\mathcal{M}f_n^k}(|f_n^kTf_n^k|^2+\frac1m)^{\tau(f^k_n)} \\
\geq\prod_{k=0}^{2^{n+1}-1}\Delta_{f_{n+1}^k\mathcal{M}f_{n+1}^k}(|f_{n+1}^kTf_{n+1}^k|^2+\frac1m)^{\tau(f^k_{n+1})}=\Delta(|{\rm Exp}_{\mathcal{D}_{n+1}'}(T)|^2+\frac1m)
\end{multline*}
\end{proof}

The next lemma shows that the Fuglede-Kadison determinant of the operator $T$ coincides with that of its expectation onto the commutant of a nest of invariant projections.

\begin{lem}\label{first exp lemma} Let $p_t\in\mathcal{M},$ $t\in[0,1],$ be an increasing net of projections,
continuous from the right, with $p_0=0$ and $p_1=1$ and let $\mathcal{D}$ be the von Neumann subalgebra generated by $\{p_t\mid t\in[0,1]\}.$ If $T\in\mathcal{M}$ and $Tp_t=p_tTp_t$ for all $t\in[0,1],$ then
\begin{equation}\label{eq:DeltaD'T}
\Delta(T)=\Delta({\rm Exp}_{\mathcal{D}'}(T)).
\end{equation}
\end{lem}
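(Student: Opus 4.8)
The plan is to obtain \eqref{eq:DeltaD'T} as a limiting case of Lemma~\ref{vspom lemma} together with Lemma~\ref{exp limit lemma} and Lemma~\ref{monotone triv}, exploiting the dyadic approximation $\mathcal{D}_n\uparrow\mathcal{D}$ and the regularization parameter $\frac1m$. First I would introduce the dyadic subalgebras $\mathcal{D}_n$ generated by $p_{k/2^n}$, $0\le k\le 2^n$, noting $\bigcup_n\mathcal{D}_n$ is strong-operator dense in $\mathcal{D}$, hence $\bigcap_n\mathcal{D}_n'=\mathcal{D}'$. By Lemma~\ref{exp limit lemma}, ${\rm Exp}_{\mathcal{D}_n'}(T)\to{\rm Exp}_{\mathcal{D}'}(T)$ in strong operator topology; since these operators are uniformly bounded (each ${\rm Exp}_{\mathcal{D}_n'}$ has norm $1$), so do the positive operators $|{\rm Exp}_{\mathcal{D}_n'}(T)|^2+\frac1m\to|{\rm Exp}_{\mathcal{D}'}(T)|^2+\frac1m$. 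All of these are bounded below by $\frac1m>0$, hence invertible with uniformly bounded inverses, so the Fuglede--Kadison determinant is continuous along this convergence (continuity of $\Delta$ on sets of uniformly invertible operators, as cited from \cite{FK}). Therefore
\[
\lim_{n\to\infty}\Delta\big(|{\rm Exp}_{\mathcal{D}_n'}(T)|^2+\tfrac1m\big)=\Delta\big(|{\rm Exp}_{\mathcal{D}'}(T)|^2+\tfrac1m\big).
\]

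Next I would set $a_{n,m}=\log\Delta\big(|{\rm Exp}_{\mathcal{D}_n'}(T)|^2+\frac1m\big)$. By Lemma~\ref{vspom lemma} this is decreasing in $n$, and it is manifestly decreasing in $m$ since $|X|^2+\frac1m\ge|X|^2+\frac1{m+1}$ and $\Delta$ is monotone on positive operators. So Lemma~\ref{monotone triv} applies and the two iterated limits agree. On one hand, $\lim_m\lim_n a_{n,m}=\lim_m\log\Delta\big(|{\rm Exp}_{\mathcal{D}'}(T)|^2+\frac1m\big)$, and letting $m\to\infty$ here, by the Monotone Convergence Principle applied to $\tau\big(\log(|{\rm Exp}_{\mathcal{D}'}(T)|^2+\frac1m)\big)$ (the integrands decrease to $2\log|{\rm Exp}_{\mathcal{D}'}(T)|$, which is $\tau$-integrable or $-\infty$ consistently on both sides), this equals $2\log\Delta({\rm Exp}_{\mathcal{D}'}(T))$. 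On the other hand, $\lim_n\lim_m a_{n,m}=\lim_n\log\Delta\big(|{\rm Exp}_{\mathcal{D}_n'}(T)|^2\big)=\lim_n 2\log\Delta({\rm Exp}_{\mathcal{D}_n'}(T))$ by the same Monotone Convergence argument for each fixed $n$.

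It then remains to identify $\lim_n\Delta({\rm Exp}_{\mathcal{D}_n'}(T))$ with $\Delta(T)$. Here I would use that, writing $f_n^k=p_{(k+1)/2^n}-p_{k/2^n}$, the operator $T$ is upper triangular with respect to $(f_n^k)_{0\le k<2^n}$ with diagonal blocks $f_n^kTf_n^k$, whereas ${\rm Exp}_{\mathcal{D}_n'}(T)=\sum_k f_n^kTf_n^k$ keeps exactly those blocks. By the iterated application of Theorem~\ref{thm:det nu matrix} (peeling off one block at a time), the off-diagonal blocks do not affect the Fuglede--Kadison determinant, so $\Delta(T)=\prod_{k=0}^{2^n-1}\Delta_{f_n^k\mathcal{M}f_n^k}(f_n^kTf_n^k)^{\tau(f_n^k)}=\Delta({\rm Exp}_{\mathcal{D}_n'}(T))$ for every $n$. (One must be slightly careful with the convention $\Delta_{\{0\}}(0)^0=1$ for vanishing blocks, already adopted after Theorem~\ref{thm:det nu matrix}.) Thus $\lim_n\log\Delta({\rm Exp}_{\mathcal{D}_n'}(T))=\log\Delta(T)$, and combining with the equality of iterated limits gives $\log\Delta(T)=\log\Delta({\rm Exp}_{\mathcal{D}'}(T))$, which is \eqref{eq:DeltaD'T}.

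The main obstacle I anticipate is the interchange of limits at the boundary $m\to\infty$, i.e.\ justifying that the regularized determinants converge to the unregularized ones; the quantities $\log\Delta(\cdot)$ can be $-\infty$ (when the operator is not invertible), so one has to run the Monotone Convergence argument at the level of $\tau(\log(\,\cdot\,))$ with possibly $-\infty$ values and make sure Lemma~\ref{monotone triv} is being applied to a genuinely real-valued (or uniformly $-\infty$) doubly-monotone array. Since for each fixed $n$ and $m$ the regularized operator is invertible, $a_{n,m}$ is finite, and the array is legitimately a decreasing real array; the only delicate point is at the limits, handled by Monotone Convergence on both orders. Everything else is bookkeeping with Theorem~\ref{thm:det nu matrix} and the continuity of $\Delta$ on invertibles.
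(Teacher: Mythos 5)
Your proposal is correct and follows essentially the same route as the paper: dyadic subalgebras $\mathcal{D}_n$, monotonicity in $n$ from Lemma~\ref{vspom lemma} and in $m$ by positivity, interchange of iterated limits via Lemma~\ref{monotone triv}, the identity $\Delta({\rm Exp}_{\mathcal{D}_n'}(T))=\Delta(T)$ from Theorem~\ref{thm:det nu matrix}, and monotone convergence as $m\to\infty$. The only cosmetic difference is that for the limit in $n$ at fixed $m$ the paper invokes Lemma~2.25 of \cite{HS1}, whereas you argue directly from uniform invertibility; your argument is fine provided you justify it via s.o.t.\ (equivalently $\|\cdot\|_2$) convergence and normality of $\tau$ rather than the norm-continuity statement from \cite{FK}, since ${\rm Exp}_{\mathcal{D}_n'}(T)$ converges only strongly, not in norm.
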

\begin{proof} Let $\mathcal{D}_n$ be the algebra generated by the projections $p_{k/2^n},$ $0\leq k\leq 2^n$.
Using the continuity from the right of $p_t$,
we get $\mathcal{D}'=\bigcap_{n\geq1}\mathcal{D}_n'$ and, by Lemma \ref{exp limit lemma}, we have
$${\rm Exp}_{\mathcal{D}'}(T)=\lim_{n\to\infty}{\rm Exp}_{\mathcal{D}_n'}(T)$$
in the strong operator topology.

By Lemma \ref{vspom lemma}, the quantity $\Delta(|{\rm Exp}_{\mathcal{D}_n'}(T)|^2+\frac1m)$ is decreasing in $n$ and it is, trivially, also decreasing in $m.$ It follows from Lemma \ref{monotone triv} that
\begin{equation}\label{first exp}
\lim_{n\to\infty}\lim_{m\to\infty}\Delta(|{\rm Exp}_{\mathcal{D}_n'}(T)|^2+\frac1m)=\lim_{m\to\infty}\lim_{n\to\infty}\Delta(|{\rm Exp}_{\mathcal{D}_n'}(T)|^2+\frac1m).
\end{equation}

Note that, by $T$-invariance of the projections $p_{k/2^n}$, using them to write $T$ as a block matrix of operators,
yields an upper triangular matrix, and ${\rm Exp}_{\mathcal{D}_n'}(T)$ is obtained by setting the non-diagonal blocks to zero.
To compute the left hand side of \eqref{first exp}, 
Theorem \ref{thm:det nu matrix} yields $\Delta({\rm Exp}_{\mathcal{D}_n'}(T))=\Delta(T)$ and, thereby, we have
$$\lim_{m\to\infty}\Delta(|{\rm Exp}_{\mathcal{D}_n'}(T)|^2+\frac1m)=2\Delta({\rm Exp}_{\mathcal{D}_n'}(T))=2\Delta(T).$$
Letting $n\to\infty,$ we infer that the left hand side of \eqref{first exp} is $2\Delta(T)$.
To compute the right hand side of \eqref{first exp}, by Lemma 2.25 of \cite{HS1}, we have
$$\lim_{n\to\infty}\Delta(|{\rm Exp}_{\mathcal{D}_n'}(T)|^2+\frac1m)=\Delta(|{\rm Exp}_{\mathcal{D}'}(T)|^2+\frac1m).$$
Letting $m\to\infty,$ we infer that the right hand side of \eqref{first exp} is $2\Delta({\rm Exp}_{\mathcal{D}'}(T)).$
Thus, we have~\eqref{eq:DeltaD'T}.
\end{proof}

\begin{lem}\label{brown small bound} Let $T\in\mathcal{M}$ and let $\rho$, $\mathcal{D}$ and $\mathcal{D}_n$ be as in Construction \ref{peano}.
If $\omega$ is the modulus of continuity of $\rho,$ then the Brown measure of $T-{\rm Exp}_{\mathcal{D}_n}(T)$ is supported in the ball of radius $\omega(2^{-n})$ centered
at the origin.
\end{lem}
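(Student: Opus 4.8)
The plan is to study $S_n := T - {\rm Exp}_{\mathcal{D}_n}(T)$ through its block decomposition with respect to the pairwise orthogonal projections $f^n_k := q_{(k+1)/2^n} - q_{k/2^n}$, $0 \le k < 2^n$, which sum to $1$ since $q_0 = 0$ and $q_1 = 1$. Writing $c^n_k := \tau(f^n_k T f^n_k)/\tau(f^n_k)$ for the indices with $f^n_k \ne 0$, formula~\eqref{eq:EDn} gives ${\rm Exp}_{\mathcal{D}_n}(T) = \sum_k c^n_k f^n_k$, which is diagonal relative to the $f^n_k$. Since $Tq_t = q_t T q_t$ for every $t$, the matrix of $T$, and hence of $S_n$, relative to $(f^n_k)_k$ is upper triangular, with diagonal blocks $D_k := f^n_k T f^n_k - c^n_k f^n_k \in f^n_k \mathcal{M} f^n_k$. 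First I would iterate Theorem~\ref{thm:det nu matrix}, splitting off one $T$-invariant projection $q_{k/2^n}$ at each step, to obtain
\[
\nu_{S_n} = \sum_{k:\, f^n_k \ne 0} \tau(f^n_k)\, \nu_{D_k},
\]
where $\nu_{D_k}$ denotes the Brown measure of $D_k$ computed inside $f^n_k \mathcal{M} f^n_k$.

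It then remains to show that each $\nu_{D_k}$ is supported in the closed ball of radius $\omega(2^{-n})$ centered at the origin, and two facts suffice. First, the Brown measure is translation-covariant: directly from the defining identity~\eqref{br def} and the uniqueness of the Brown measure, $\nu_{D_k}$ is the pushforward of $\nu_{f^n_k T f^n_k}$ under the map $z \mapsto z - c^n_k$. Second, as established in the proof of Lemma~\ref{lem:Enormconv} from Theorem~\ref{thm:HS}~\eqref{hsc},\eqref{hsd} and Brown's version of Lidskii's theorem~\cite{Brown}, $\nu_{f^n_k T f^n_k}$ is supported in $\rho((\frac{k}{2^n}, \frac{k+1}{2^n}])$, while~\eqref{trace partitioning} places $c^n_k$ in the closed convex hull of this same set. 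Since that set has diameter at most $\omega(2^{-n})$, and passing to the closed convex hull does not increase the diameter, every $z$ in $\rho((\frac{k}{2^n}, \frac{k+1}{2^n}])$ satisfies $|z - c^n_k| \le \omega(2^{-n})$; hence $\nu_{D_k}$ is supported in $\{z : |z| \le \omega(2^{-n})\}$. Summing over $k$ gives the conclusion.

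The hard part is really only the bookkeeping in the first step: checking that the block-upper-triangular structure survives subtraction of the diagonal operator ${\rm Exp}_{\mathcal{D}_n}(T)$, and that the two-by-two identity of Theorem~\ref{thm:det nu matrix} iterates to the weighted sum displayed above over all $2^n$ diagonal blocks. The verification of translation covariance and of the diameter estimate are short, and nothing beyond the results already recorded above is needed.
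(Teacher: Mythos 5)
Your proposal is correct and follows essentially the same route as the paper: block upper-triangularity of $T-{\rm Exp}_{\mathcal{D}_n}(T)$ with respect to the $f^n_k$, the support/convex-hull facts from the proof of Lemma~\ref{lem:Enormconv} to localize each diagonal block's Brown measure in a ball of radius $\omega(2^{-n})$, and iteration of Theorem~\ref{thm:det nu matrix} to assemble the total Brown measure. Your explicit mention of translation covariance of the Brown measure is a detail the paper leaves implicit, but the argument is the same.
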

\begin{proof} The operator $T-{\rm Exp}_{\mathcal{D}_n}(T)$ when written as a matrix with respect to the nonzero projections from the list $(f^n_k)_{k=0}^{2^n-1}$ is upper triangular by Theorem \ref{thm:HS} \eqref{hsb} and has for diagonal entries
\begin{equation}\label{eq:fkdiagelt}
f^n_kTf^n_k-\frac{\tau(f^n_kTf^n_k)}{\tau(f^n_k)}f^n_k.
\end{equation}
Repeating the argument in Lemma \ref{lem:Enormconv} (see \eqref{trace partitioning}), we obtain, when $f^n_k\ne0$, that the Brown measure of $f^n_kTf^n_k$ in $f^n_k\mathcal{M}f^n_k$ is supported in
$\rho((\frac{k}{2^n},\frac{k+1}{2^n}])$ and $\tau(f^n_kTf^n_k)/\tau(f^n_k)$ lies in the convex hull of this set. Hence, the Brown measure of the difference \eqref{eq:fkdiagelt} is supported in the ball centered at the origin of radius $\operatorname{diam}(\rho((\frac{k}{2^n},\frac {k+1}{2^n}]))$, which is no greater than $\omega(2^{-n}).$ Now applying Theorem \ref{thm:det nu matrix} $n$ times, we get that the Brown measure of $T-{\rm Exp}_{\mathcal{D}_n}(T)$ lies in
the ball of radius $\omega(2^{-n})$ centered at the origin.
\end{proof}

The following lemma gives the decomposition of Theorem \ref{decomposition} in a special case.

\begin{lem}\label{comm quasinilpotent}
Let $T\in\mathcal{M}$ and let $\mathcal{D}$ be as in Construction \ref{peano}.
If $T\in\mathcal{D}',$ then $T-{\rm Exp}_{\mathcal{D}}(T)$ is s.o.t.-quasinilpotent.
\end{lem}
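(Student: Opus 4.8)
The plan is to show that the Brown measure of $Q:=T-{\rm Exp}_{\mathcal{D}}(T)$ is concentrated at $\{0\}$, since by the discussion in \S\ref{subsec:FK} (via Theorem~8.1 of~\cite{HS2}) this is equivalent to $Q$ being s.o.t.-quasinilpotent. The strategy is an approximation argument: I would relate $Q$ to the operators $Q_n:=T-{\rm Exp}_{\mathcal{D}_n}(T)$, for which Lemma~\ref{brown small bound} already gives that $\nu_{Q_n}$ is supported in the ball of radius $\omega(2^{-n})$ about the origin. Since ${\rm Exp}_{\mathcal{D}_n}(T)\to{\rm Exp}_{\mathcal{D}}(T)$ in operator norm by Lemma~\ref{lem:Enormconv}, we have $Q_n\to Q$ in norm with $\|Q_n-Q\|_\infty\le\omega(2^{-n})\to0$.

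The key step is then to pass the support condition on the Brown measures through this norm limit. The clean way is to work with Fuglede--Kadison determinants, as in Proposition~\ref{br equ}: for fixed $\lambda\in\mathbb{C}$ and $\varepsilon>0$, continuity of $\Delta$ on invertibles (from~\cite{FK}) gives
\[
\log\Delta(|Q_n-\lambda|^2+\varepsilon)\longrightarrow\log\Delta(|Q-\lambda|^2+\varepsilon),
\]
i.e.
\[
\int_{\mathbb{C}}\log(|z-\lambda|^2+\varepsilon)\,d\nu_{Q_n}(z)\longrightarrow\int_{\mathbb{C}}\log(|z-\lambda|^2+\varepsilon)\,d\nu_Q(z).
\]
Because each $\nu_{Q_n}$ is supported in $\{|z|\le\omega(2^{-n})\}$, for $|\lambda|$ bounded away from $0$ the left-hand integrand is, for large $n$, uniformly close to $\log(|\lambda|^2+\varepsilon)$ on the support of $\nu_{Q_n}$; hence the limit is $\log(|\lambda|^2+\varepsilon)$. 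Letting $\varepsilon\to0$ (Monotone Convergence, as in Proposition~\ref{br equ}) yields $\int\log|z-\lambda|\,d\nu_Q(z)=\log|\lambda|$ for every $\lambda\ne0$, and taking Laplacians forces $\nu_Q=\delta_0$. Actually, the hypothesis $T\in\mathcal{D}'$ should let me avoid even this and argue more directly: when $T$ commutes with $\mathcal{D}$, one expects the diagonal-block structure to persist cleanly and the determinant identity $\Delta(Q)=1$ (in the sense of Brown measure at $0$) to follow from Lemma~\ref{first exp lemma} and Theorem~\ref{thm:det nu matrix} applied to $Q_n$; I would check whether Lemma~\ref{first exp lemma}, together with the observation that ${\rm Exp}_{\mathcal{D}}(Q)=0$, directly gives $\Delta(|Q-\lambda|^2+\varepsilon)=\lim_n\Delta(|Q_n-\lambda|^2+\varepsilon)$ without needing the full approximation.

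I expect the main obstacle to be the interchange of limits and the behaviour near $\lambda=0$: the function $\lambda\mapsto\log|z-\lambda|$ is singular there, so one cannot simply evaluate the limiting determinant at $\lambda=0$, and the role of the regularizer $\varepsilon$ (and the order of the limits $n\to\infty$, $\varepsilon\to0$) must be handled with care, exactly as in the proof of Proposition~\ref{br equ}. A secondary point to verify is why the hypothesis $T\in\mathcal{D}'$ is needed at all — presumably to ensure that $\mathcal{D}$ is genuinely the relevant diagonal algebra and that ${\rm Exp}_{\mathcal{D}}(Q)=0$ behaves well — and I would make sure this hypothesis is used where the general (non-commuting) case would break down, namely in identifying the normal part as a genuine conditional expectation compatible with the nest $(q_t)$.
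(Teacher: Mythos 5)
Your overall strategy --- deducing that $\nu_Q$ is concentrated at $0$ by passing the support information for $\nu_{Q_n}$ through the norm limit $Q_n\to Q$ via Fuglede--Kadison determinants --- has a genuine gap at its central step. The identity
\[
\log\Delta(|Q_n-\lambda|^2+\varepsilon)=\int_{\mathbb{C}}\log(|z-\lambda|^2+\varepsilon)\,d\nu_{Q_n}(z)
\]
is false for non-normal operators: already for the nilpotent matrix $\left(\begin{smallmatrix}0&1\\0&0\end{smallmatrix}\right)$ with $\lambda=0$ the left side equals $\frac12\log\varepsilon+\frac12\log(1+\varepsilon)$ while the right side equals $\log\varepsilon$. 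For fixed $\varepsilon>0$ only the inequality ``$\geq$'' holds in general (cf.\ Lemma 2.20 of \cite{HS1}), with equality recovered only as $\varepsilon\to0$. The identity is available in Lemma \ref{det convergence} precisely because ${\rm Exp}_{\mathcal{D}_n}(T)$ is normal, whereas your $Q_n=T-{\rm Exp}_{\mathcal{D}_n}(T)$ is not. Using only the correct inequality you obtain $\int\log|z-\lambda|\,d\nu_Q(z)\geq\log|\lambda|$ for $\lambda\ne0$, and this does not force $\nu_Q=\delta_0$: the uniform probability measure on the unit circle has logarithmic potential $\log\max(|\lambda|,1)\geq\log|\lambda|$. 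Getting the reverse inequality would require interchanging the limits $n\to\infty$ and $\varepsilon\to0$, for which there is no monotonicity or uniformity available; indeed, the Brown measure is not continuous under operator-norm convergence (norm limits of s.o.t.-quasinilpotent operators need not be s.o.t.-quasinilpotent), so no argument using only $\|Q_n-Q\|_\infty\to0$ together with the smallness of the supports of the $\nu_{Q_n}$ can succeed.

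The hypothesis $T\in\mathcal{D}'$ is not there merely to make ${\rm Exp}_{\mathcal{D}}(Q)=0$ behave well; it is the engine of the paper's proof and is exactly what substitutes for the missing continuity. Because $T$ commutes with $\mathcal{D}$, the operators $T-{\rm Exp}_{\mathcal{D}_n}(T)$ and ${\rm Exp}_{\mathcal{D}}(T)-{\rm Exp}_{\mathcal{D}_n}(T)$ commute, so $(T-{\rm Exp}_{\mathcal{D}}(T))^{2m}$ can be expanded binomially in these two commuting contractions. One then estimates $\|(T-{\rm Exp}_{\mathcal{D}}(T))^{2m}\xi_m\|^{1/m}$ by combining the norm bound $\|{\rm Exp}_{\mathcal{D}}(T)-{\rm Exp}_{\mathcal{D}_n}(T)\|_\infty\le\omega(2^{-n})$ with the subspace characterization \eqref{eq:H-S H_r} of the Haagerup--Schultz projections applied to $T-{\rm Exp}_{\mathcal{D}_n}(T)$ (whose Brown measure is controlled by Lemma \ref{brown small bound}), concluding at the level of vectors rather than determinants. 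If you wish to retain a determinant-based argument you must find a replacement for this commutation trick; the norm-approximation route alone will not close.
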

\begin{proof} We assume without loss of generality $\|T\|_{\infty}\leq 1/2$.
For every $n\geq0,$ let $\mathcal{D}_n$ be the subalgebra of $\mathcal{D}$ generated by $q_{k/2^n}$,
$0\leq k\leq 2^n$.
Fix $n\in\mathbb{N}$ and a unit vector $\eta\in H.$ By assumption, $T$ commutes with ${\rm Exp}_{\mathcal{D}}(T).$ Since $T-{\rm Exp}_{\mathcal{D}_n}(T)$ and ${\rm Exp}_{\mathcal{D}}(T)-{\rm Exp}_{\mathcal{D}_n}(T)$ commute, we have
$$(T-{\rm Exp}_{\mathcal{D}}(T))^{2m}=\sum_{k=0}^{2m}(-1)^k\binom{2m}{k}({\rm Exp}_{\mathcal{D}}(T)-{\rm Exp}_{\mathcal{D}_n}(T))^{2m-k}(T-{\rm Exp}_{\mathcal{D}_n}(T))^k.$$
Since $\|T\|_{\infty}\leq 1/2,$ it follows that both ${\rm Exp}_{\mathcal{D}}(T)-{\rm Exp}_{\mathcal{D}_n}(T)$ and $T-{\rm Exp}_{\mathcal{D}_n}(T)$ are contractions.

For $k\leq m,$ we have
$$\|({\rm Exp}_{\mathcal{D}}(T)-{\rm Exp}_{\mathcal{D}_n}(T))^{2m-k}(T-{\rm Exp}_{\mathcal{D}_n}(T))^k\eta\|\leq\|{\rm Exp}_{\mathcal{D}}(T)-{\rm Exp}_{\mathcal{D}_n}(T)\|_{\infty}^m.$$
For $k>m,$ we have
$$\|({\rm Exp}_{\mathcal{D}}(T)-{\rm Exp}_{\mathcal{D}_n}(T))^{2m-k}(T-{\rm Exp}_{\mathcal{D}_n}(T))^k\eta\|\leq\|(T-{\rm Exp}_{\mathcal{D}_n}(T))^m\eta\|.$$
Hence, by Lemma \ref{lem:Enormconv}, we have
\begin{equation}\label{eq:TEpow}
\|(T-{\rm Exp}_{\mathcal{D}}(T))^{2m}\eta\|\leq 2^{2m}\max\{\omega(2^{-n})^m,\|(T-{\rm Exp}_{\mathcal{D}_n}(T))^m\eta\|\}.
\end{equation}

By Lemma \ref{brown small bound}, the Brown measure of $T-{\rm Exp}_{\mathcal{D}_n}(T)$ is contained in the ball of radius $\omega(2^{-n})$ centered at $0.$ By the Haagerup--Schultz characterization \eqref{eq:H-S H_r}, for every unit vector $\xi\in H,$ there exists a sequence $\xi_m\to\xi$ such that $\|\xi_m\|=1$ and
$$\limsup_{m\to\infty}\|(T-{\rm Exp}_{\mathcal{D}_n}(T))^m\xi_m\|^{1/m}\leq\omega(2^{-n}).$$
Hence, there exists $M$ (depending on $n$), such that
$$\|(T-{\rm Exp}_{\mathcal{D}_n}(T))^m\xi_m\|\leq (2\omega(2^{-n}))^m,\quad m>M.$$
Substituting $\xi_m$ for $\eta$ in \eqref{eq:TEpow}, we obtain
$$\|(T-{\rm Exp}_{\mathcal{D}}(T))^{2m}\xi_m\|^{1/m}\leq 8\omega(2^{-n}),\quad m>M.$$
Since $\xi$ was any unit vector, it follows from the characterization \eqref{eq:H-S H_r} of the Haagerup--Schultz subspace that the Brown measure of $(T-{\rm Exp}_{\mathcal{D}}(T))^2$ is supported in the ball centered at $0$ of radius $8\omega(2^{-n}).$ Letting $n\to\infty,$ we obtain that the Brown measure of $T-{\rm Exp}_{\mathcal{D}}(T)$ is $\delta_0.$
\end{proof}

\begin{proof}[Proof of Theorem \ref{decomposition}] Let $q_t,$ $t\in[0,1],$ and $\mathcal{D}$ be as in Construction \ref{peano}. Applying Lemma \ref{first exp lemma} to the operators $Tq_t-\lambda$ and $(1-q_t)T-\lambda,$ we obtain
\begin{align*}
\Delta(Tq_t-\lambda)&=\Delta({\rm Exp}_{\mathcal{D'}}(Tq_t)-\lambda), \\
\Delta((1-q_t)T-\lambda)&=\Delta({\rm Exp}_{\mathcal{D}'}((1-q_t)T-\lambda)).
\end{align*}
Hence, by \eqref{br def}, the Brown measure of the operator $Tq_t$ (respectively, of $(1-q_t)T$) equals that of ${\rm Exp}_{\mathcal{D}'}(T)q_t$ (respectively, of $(1-q_t){\rm Exp}_{\mathcal{D}'}(T)$). In particular, the projection $q_t$ is the Haagerup--Schultz projection for ${\rm Exp}_{\mathcal{D}'}(T)$ associated to the set $\rho([0,t]).$ By definition of $\mathcal{D}',$ the operator ${\rm Exp}_{\mathcal{D'}}(T)$ commutes with every $q_t.$ Hence, ${\rm Exp}_{\mathcal{D}'}(T)$ satisfies the hypotheses of Lemma \ref{comm quasinilpotent}. Since $\mathcal{D}\subseteq\mathcal{D}'$ we have ${\rm Exp}_{\mathcal{D}}({\rm Exp}_{\mathcal{D}'}(T))={\rm Exp}_{\mathcal{D}}(T)$ and by application of this lemma we infer that the Brown measure of the operator ${\rm Exp}_{\mathcal{D'}}(T)-{\rm Exp}_{\mathcal{D}}(T)$ is $\delta_0.$

Applying Lemma \ref{first exp lemma} to the operator $T-{\rm Exp}_{\mathcal{D}}(T)-\lambda,$ we obtain
$$\Delta(T-{\rm Exp}_{\mathcal{D}}(T)-\lambda)=\Delta({\rm Exp}_{\mathcal{D'}}(T)-{\rm Exp}_{\mathcal{D}}(T)-\lambda).$$
Hence, by \eqref{br def}, the Brown measure of the operator $T-{\rm Exp}_{\mathcal{D}}(T)$ equals that of ${\rm Exp}_{\mathcal{D'}}(T)-{\rm Exp}_{\mathcal{D}}(T)$, i.e., it is $\delta_0$.

Now letting $Q=T-{\rm Exp}_{\mathcal{D}}(T)$ and $N={\rm Exp}_{\mathcal{D}}(T)$, we have $T=N+Q$ with $N$ normal
and $Q$ s.o.t.-quasinilpotent. This proves Theorem \ref{decomposition} \eqref{decc}. Theorem \ref{decomposition} \eqref{decb} is already proved in Proposition \ref{br equ}.
\end{proof}

\section{A Weyl inequality in the II$_1$--setting}

In the following assertion, $\log_+(t):=\max(\log(t),0)$.
The analogue of the following assertion for Hardy-Littlewood submajorization is well-known.

\begin{lem}\label{logmaj another} We have $B\prec\prec_{\log}A$ if and only if for all $t>0$ we have
\begin{equation}\label{eq:logAtBt}
\tau(\log_+(\frac{|B|}{t}))\leq\tau(\log_+(\frac{|A|}{t})).
\end{equation}
\end{lem}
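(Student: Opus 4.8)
The plan is to reduce the logarithmic submajorization statement to an ordinary (Hardy--Littlewood) submajorization statement applied to a suitable modification of $|A|$ and $|B|$, for which the analogous ``integral test'' is classical. The key algebraic observation is that for a positive operator $X$ and $t>0$ we have $\log_+(|X|/t)=\log(\mu(s,X)/t)_+$ pointwise in $s$, and more usefully, $\tau(\log_+(|X|/t))=\int_0^1(\log\mu(s,X)-\log t)_+\,ds$. So \eqref{eq:logAtBt} says precisely that for every real $c=\log t$, the functions $f_B(s)=\log\mu(s,B)$ and $f_A(s)=\log\mu(s,A)$ on $(0,1)$ satisfy $\int_0^1(f_B(s)-c)_+\,ds\le\int_0^1(f_A(s)-c)_+\,ds$ for all $c\in\mathbb{R}$.

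Thus the lemma is really the following statement about decreasing functions $f,g$ on $(0,1)$ (here $f=f_B$, $g=f_A$, possibly taking the value $-\infty$ on an interval at the right end): one has $\int_0^t f\le\int_0^t g$ for all $t\in(0,1)$ if and only if $\int_0^1(f-c)_+\le\int_0^1(g-c)_+$ for all $c\in\mathbb{R}$. This is the classical equivalence between Hardy--Littlewood (weak) majorization of the ``tails'' and comparison of the Legendre-type transforms $c\mapsto\int(f-c)_+$; it is Proposition~14.H.1 / the ``cut function'' characterization in \cite{MOA}, or follows from Theorem~\ref{hlp theorem} applied to the convex increasing functions $\Phi_c(x)=(x-c)_+$ (which, composed with $\exp$, remain convex and increasing). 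I would carry it out in the direction ($\Rightarrow$) first: given $B\prec\prec_{\log}A$, i.e.\ $f_B\prec\prec f_A$ as functions, and noting $x\mapsto(x-c)_+$ is convex and nondecreasing, one applies the function-version of Theorem~\ref{hlp theorem} (or directly Hardy--Littlewood--Polya for functions) to get $\int_0^1(f_B-c)_+\le\int_0^1(f_A-c)_+$, which rewrites as \eqref{eq:logAtBt} after translating back through $c=\log t$ and the identity $\tau(\log_+(|X|/t))=\int_0^1(\log\mu(s,X)-\log t)_+\,ds$; one must handle the values $t$ for which $\mu(s,X)$ can be $0$ (so $\log\mu=-\infty$), but $(-\infty-c)_+=0$ so the integrand is unaffected and everything is finite.

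For the converse ($\Leftarrow$), I would integrate in $c$: assuming $\int_0^1(f_B-c)_+\,ds\le\int_0^1(f_A-c)_+\,ds$ for all real $c$, I fix $t\in(0,1)$ and want $\int_0^t f_B\le\int_0^t f_A$. The standard trick is to use $\int_0^t h(s)\,ds=\inf_{c\in\mathbb{R}}\big(tc+\int_0^1(h(s)-c)_+\,ds\big)$ for a decreasing function $h$ (the infimum being attained at $c=h(t)$, using monotonicity): indeed $tc+\int_0^1(h-c)_+ \ge tc+\int_0^t(h-c)=\int_0^t h$ always, with equality at $c=h(t)$ since then $(h-c)_+$ vanishes on $(t,1)$ and equals $h-c$ on $(0,t)$. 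Applying this to $h=f_B$ and using the hypothesis termwise gives $\int_0^t f_B=\inf_c(tc+\int_0^1(f_B-c)_+)\le\inf_c(tc+\int_0^1(f_A-c)_+)=\int_0^t f_A$, which is $B\prec\prec_{\log}A$. The main obstacle is purely bookkeeping: ensuring the variational identity $\int_0^t h=\inf_c(tc+\int_0^1(h-c)_+)$ is valid when $h=\log\mu(s,\cdot)$ is allowed to be $-\infty$ on a terminal subinterval and when the relevant integrals might a priori be $-\infty$; this is handled by first noting $\log_+$ only ever involves the positive part so all the quantities in \eqref{eq:logAtBt} are finite, and by checking that if $f_A$ (hence the right side) has $\int_0^t f_A=-\infty$ then $\mu(s,A)=0$ on a set of positive measure below $t$, forcing (via the $c\to-\infty$ behavior of the hypothesis, or directly via $\tau(\log_+(|B|/t))\le\tau(\log_+(|A|/t))$ near $t=0$) that $\mu(s,B)=0$ there as well, so both sides are $-\infty$ and the inequality holds trivially.
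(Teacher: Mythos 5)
Your proposal is correct and is essentially the paper's own argument in different packaging: the variational identity $\int_0^t h=\inf_{c}\bigl(tc+\int_0^1(h-c)_+\bigr)$, whose infimum is attained at $c=\log\mu(t,A)$, is precisely the paper's choice $t=\mu(u,A)$ in the ``if'' direction, while your application of Hardy--Littlewood--Polya to the cut functions $x\mapsto(x-c)_+$ unwinds to the paper's direct chain $\tau(\log_+(|B|/t))=\int_0^{d_{|B|}(t)}\log(\mu(s,B)/t)\,ds\leq\int_0^{d_{|A|}(t)}\log(\mu(s,A)/t)\,ds=\tau(\log_+(|A|/t))$ in the ``only if'' direction. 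Your attention to the $\mu(s,\cdot)=0$ degeneracies is welcome (the paper glosses over the case $\mu(u,A)=0$), but otherwise the two proofs coincide.
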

\begin{proof} We prove the if part first. For a given $u\in(0,1),$ set $t=\mu(u,A).$ We have
$$\int_0^u\log(\frac{\mu(s,B)}{t})ds\leq\tau(\log_+(\frac{|B|}{t}))\leq\tau(\log_+(\frac{|A|}{t}))=\int_0^u\log(\frac{\mu(s,A)}{t})ds.$$
Hence, $B\prec\prec_{\log}A.$

We now prove the only if part. If $t>\|B\|_{\infty},$ then \eqref{eq:logAtBt} holds trivially, while if $t\le\|B\|_{\infty}$ then
we have
\begin{multline*}
\tau(\log_+(\frac{|B|}{t}))=\int_0^{d_{|B|}(t)}\log(\frac{\mu(s,B)}{t})ds\leq\int_0^{d_{|B|}(t)}\log(\frac{\mu(s,A)}{t})ds \\
\leq\int_0^{d_{|A|}(t)}\log(\frac{\mu(s,A)}{t})ds=\tau(\log_+(\frac{|A|}{t})).
\end{multline*}
\end{proof}

\begin{lem}\label{stamp} Let $\mathcal{M}$ be a finite von Neumann algebra equipped with a faithful normal tracial state $\tau.$ If $0\leq A,B\in\mathcal{M}$ are such that $B\prec\prec_{\log}A,$ then $B+1\prec\prec_{\log}A+1.$
\end{lem}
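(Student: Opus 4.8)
The plan is to reduce the statement to the already-established Lemma~\ref{logmaj another}. That is, I would prove that for every $t>0$,
\[
\tau\bigl(\log_+(\tfrac{B+1}{t})\bigr)\leq\tau\bigl(\log_+(\tfrac{A+1}{t})\bigr),
\]
and then invoke Lemma~\ref{logmaj another} to conclude $B+1\prec\prec_{\log}A+1$. Here I am using that $A,B\geq0$, so $A+1$ and $B+1$ are positive with $|A+1|=A+1$ and $|B+1|=B+1$.

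First I would dispose of the easy range: if $t\leq1$, then $\log_+(\tfrac{B+1}{t})=\log(\tfrac{B+1}{t})=\log(B+1)-\log t$ as an operator inequality (since $B+1\geq1\geq t$, the spectrum of $(B+1)/t$ lies in $[1/t,\infty)\subseteq[1,\infty)$), and similarly for $A$; taking traces, the desired inequality becomes $\tau(\log(B+1))\leq\tau(\log(A+1))$, i.e. $\Delta(B+1)\leq\Delta(A+1)$, which is immediate from $B\prec\prec_{\log}A$ by integrating over $(0,1)$ (this is $\log\Delta$ expressed via \eqref{det} as $\int_0^1\log\mu(s,\cdot)\,ds$, together with Lemma~\ref{lem:log invertibles} rewriting $B+1\prec\prec_{\log}A+1$ at the level of logarithms — actually more simply, $\int_0^1\log(1+\mu(s,B))\,ds\leq\int_0^1\log(1+\mu(s,A))\,ds$ follows from submajorization applied to the increasing convex function $x\mapsto\log(1+e^x)$ of $\log\mu$, exactly as in the proof of Corollary~\ref{standard}).

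The main work is the range $t>1$. Here $\log_+(\tfrac{B+1}{t})$ is the spectral function $x\mapsto\log_+(\tfrac{x+1}{t})$ applied to $B$, which vanishes on $[0,t-1]$ and equals $\log(\tfrac{x+1}{t})$ on $[t-1,\infty)$. Writing things out with the singular value function, $\tau(\log_+(\tfrac{B+1}{t}))=\int_0^{d_B(t-1)}\log\bigl(\tfrac{\mu(s,B)+1}{t}\bigr)\,ds$, and similarly for $A$. The key point to establish is that $\psi_t(x):=\log_+(\tfrac{e^x+1}{t})$ is an increasing convex function of $x$ on $\mathbb{R}$: it is the composition of the increasing convex map $x\mapsto\log(e^x+1)$ with the increasing convex map $y\mapsto\max(y-\log t,0)$, hence increasing and convex, and moreover it is increasing as a function that is eventually linear-ish. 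Then the argument mirrors the ``only if'' direction of Lemma~\ref{logmaj another} combined with the Hardy–Littlewood–Pólya mechanism of Corollary~\ref{standard}: one truncates, uses $B\prec\prec_{\log}A$ to get submajorization of the truncated logarithms, applies Theorem~\ref{hlp theorem} with $\Phi=\psi_t$, and handles the constant from the complementary interval. The anticipated obstacle is bookkeeping the upper limits of integration $d_B(t-1)$ versus $d_A(t-1)$: as in Lemma~\ref{logmaj another}, on the interval $(d_B(t-1),d_A(t-1))$ the integrand $\log(\tfrac{\mu(s,A)+1}{t})$ is nonnegative (since $\mu(s,A)\geq t-1$ there), so extending the $A$-integral up to $d_A(t-1)$ only helps, while extending the $B$-integral past $d_B(t-1)$ would add nonpositive terms; threading this correctly, together with the requirement that $\psi_t(0)$ (the value on the truncated interval) be handled, is the only delicate step, but it is routine once the convexity of $\psi_t$ is in hand.
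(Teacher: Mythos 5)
Your proposal is correct, and it reaches the conclusion by a genuinely different (though closely related) route from the paper. The paper never passes through Lemma~\ref{logmaj another}: it first enlarges $\mathcal{M}$ to be diffuse, fixes $t<\tau({\rm supp}(B))$, uses Lemma~\ref{boolean lemma} to build auxiliary operators $A_t$ and $C_t$ with $\mu(C_t)=\mu(B)\chi_{(0,t)}$ under a common projection, rescales by $r=\max\{\|A_t^{-1}\|_\infty,\|C_t^{-1}\|_\infty\}$ so that Lemma~\ref{lem:log invertibles} turns $C_t\prec\prec_{\log}A_t$ into ordinary submajorization of logarithms, and applies Theorem~\ref{hlp theorem} with the single convex function $z\mapsto\log(1+r^{-1}e^z)$ to obtain $\int_0^t\log(1+\mu(s,B))\,ds\le\int_0^t\log(1+\mu(s,A))\,ds$ directly for each $t$. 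You instead verify the $\log_+$ criterion of Lemma~\ref{logmaj another} threshold by threshold, which for $t\le1$ collapses to the determinant inequality $\Delta(B+1)\le\Delta(A+1)$ and for $t>1$ uses the family of increasing convex functions $\psi_t(x)=\log_+\bigl(\tfrac{e^x+1}{t}\bigr)$; the engine in both arguments is the same (truncate to the support, normalize, apply Hardy--Littlewood--P\'olya). What your route buys is that it works entirely at the level of singular value functions, so you never need the auxiliary operators or the diffuseness enlargement; the cost is the extra convexity check for $\psi_t$ and the $d_B(t-1)$ versus $d_A(t-1)$ bookkeeping, both of which you identify and which do go through. The one detail to make explicit in a write-up: before invoking Theorem~\ref{hlp theorem} you must rescale the truncated logarithms (multiply $\mu(\cdot,B)$ and $\mu(\cdot,A)$ by a suitable $\varepsilon^{-1}$, exactly as with the $\varepsilon_n$ in Corollary~\ref{standard}) so that they are nonnegative and genuinely submajorized in the sense that theorem requires, and then check that the constant contributed by the complementary interval cancels; your appeal to the mechanism of Corollary~\ref{standard} covers this, but it is the only step where a careless truncation would fail.
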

\begin{proof}
By enlarging $\mathcal{M}$ if necessary, we may without loss of generality assume it is diffuse.
Referring to Lemma \ref{boolean lemma}, we infer the existence of increasing nets $p_s,q_s$ $0\leq s\leq1,$ of projections in $\mathcal{M}$ such that $\tau(p_s)=s,$ $\tau(q_s)=s$ and such that
$$B=\int_0^1\mu(s,B)dq_s,\quad A=\int_0^1\mu(s,A)dp_s.$$
Fix $t<\tau({\rm supp}(B))$ and define the operators
$$B_t=\int_0^t\mu(s,B)dq_s,\quad C_t=\int_0^t\mu(s,B)dp_s,\quad A_t=\int_0^t\mu(s,A)dp_s.$$
It is clear that $\mu(B_t)=\mu(C_t),$ $C_t\prec\prec_{\log} A_t$ and the operators $A_t,C_t$ are in the von Neumann algebra $p_t\mathcal{M}p_t$.
Note that we have $\tau({\rm supp}(B_t))=\tau({\rm supp}(C_t))\le\tau({\rm supp}(A_t))$.
Since $t<\tau({\rm supp}(B)),$ it follows that $A_t$ and $C_t$ are invertible
and $\log(A_t),\log(C_t)\in p_t\mathcal{M}p_t$.
Let $r=\max\{\|A_t^{-1}\|_{\infty},\|C_t^{-1}\|_{\infty}\}$.
By Lemma~\ref{lem:log invertibles}, from $C_t\prec\prec_{\log}A_t$ we get
$rC_t\prec\prec_{\log}rA_t$ and $\log(r C_t)\prec\prec\log(rA_t)$.
Since the mapping $\Phi:z\to\log(1+r^{-1}e^z)$ is convex, it follows from Theorem \ref{hlp theorem} that
\[
\int_0^t\log(1+\mu(s,B))ds=\tau(\Phi(\log(rC_t)))\leq\tau(\Phi(\log(rA_t)))=\int_0^t\log(1+\mu(s,A))ds.
\]
Since $t<\tau({\rm supp}(B))$ is arbitrary, the assertion follows.
\end{proof}

Now we prove Theorem~\ref{thm:WeylII1} and some more.
\begin{thm} Let $\mathcal{M}$ be a
diffuse, finite von Neumann algebra and let $\tau$ be a normal, faithful, tracial state on $\mathcal{M}$.
Let $T\in\mathcal{M}$.
If $N\in\mathcal{M}$ is the normal operator constructed by means of Theorem \ref{decomposition}, then
\begin{enumerate}[{\rm (a)}]
\item\label{kika} $N\prec\prec_{\log}T.$
\item\label{kikb} for every increasing real valued function $\Phi$ on $[0,\infty)$ so that $\Phi\circ\exp$ is convex, we have
\begin{equation}\label{eq:Weyl}
\int\Phi(|z|)\,d\mu_T(z)=\tau(\Phi(|N|))\leq\tau(\Phi(|T|)).
\end{equation}
\end{enumerate}
\end{thm}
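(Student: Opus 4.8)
The plan is to derive both parts from the submajorization machinery developed in the preceding lemmas, using the decomposition $T = N + Q$ from Theorem~\ref{decomposition} as the entry point. The key observation is that $N = {\rm Exp}_{\mathcal{D}}(T)$, where $\mathcal{D}$ is generated by the Haagerup--Schultz nest $\{q_t\}$; approximating $\mathcal{D}$ by the finite-dimensional algebras $\mathcal{D}_n$, we have from~\eqref{eq:EDn} that ${\rm Exp}_{\mathcal{D}_n}(T) = \sum_{f^n_k \ne 0} \frac{\tau(f^n_k T f^n_k)}{\tau(f^n_k)} f^n_k$. This is a normal operator diagonal against the $f^n_k$, so we can compare it directly to the ``diagonal compression'' $\sum_k f^n_k T f^n_k$ of $T$, which is upper-triangular-compatible with $T$ and hence, by Lemma~\ref{lem:anytrace} applied inductively across the nest, satisfies $\sum_k f^n_k T f^n_k \prec\prec_{\log} T$. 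The further point is that each scalar $\frac{\tau(f^n_k T f^n_k)}{\tau(f^n_k)}$ has modulus at most $\mu(s, f^n_k T f^n_k)$ averaged appropriately, so that ${\rm Exp}_{\mathcal{D}_n}(T) \prec\prec_{\log} \sum_k f^n_k T f^n_k \prec\prec_{\log} T$; here one uses that passing to the normal part of a single block only shrinks log-submajorization, via the Fuglede--Kadison determinant identity $\Delta(f^n_k T f^n_k) = \Delta\big(\frac{\tau(f^n_k T f^n_k)}{\tau(f^n_k)} f^n_k\big)$ (both equal $|{\rm center of mass of Brown measure}|^{\tau(f^n_k)}$ only when the block is itself a scalar multiple of a projection --- in general one needs the inequality direction, which comes from the submajorization $\mu(\text{scalar}\cdot f) \prec\prec_{\log} \mu(f^n_k T f^n_k)$).

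For part~\eqref{kika}, once ${\rm Exp}_{\mathcal{D}_n}(T) \prec\prec_{\log} T$ is established for every $n$, I would pass to the limit: by Lemma~\ref{lem:Enormconv}, ${\rm Exp}_{\mathcal{D}_n}(T) \to N$ in operator norm, hence the singular value functions converge uniformly, hence $\int_0^t \log \mu(s, {\rm Exp}_{\mathcal{D}_n}(T))\,ds \to \int_0^t \log\mu(s, N)\,ds$ for each $t$ (with a little care near points where $\mu(s,N)$ may vanish --- but log-submajorization inequalities of the form $\int_0^t \log\mu(s, B_n)\,ds \le \int_0^t \log\mu(s,T)\,ds$ are preserved under such limits because $\log$ is upper-semicontinuous and the right side is fixed and finite when $T$ is invertible, or one truncates). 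This gives $N \prec\prec_{\log} T$, which is~\eqref{kika}.

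For part~\eqref{kikb}, the first equality $\int \Phi(|z|)\,d\mu_T(z) = \tau(\Phi(|N|))$ is immediate from the normal spectral theorem together with Proposition~\ref{br equ}, which gives $\nu_N = \nu_T$: for a normal operator $N$, the Brown measure is the spectral measure pushed forward via $z \mapsto z$, so $\tau(\Phi(|N|)) = \int \Phi(|z|)\,d\nu_N(z) = \int \Phi(|z|)\,d\nu_T(z)$. For the inequality $\tau(\Phi(|N|)) \le \tau(\Phi(|T|))$, I would reduce to the Hardy--Littlewood--P\'olya theorem (Theorem~\ref{hlp theorem}), which requires $\Phi$ convex (not merely $\Phi\circ\exp$ convex). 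The standard trick: apply Lemma~\ref{stamp} to get $|N| + 1 \prec\prec_{\log} |T| + 1$ from $N \prec\prec_{\log} T$; then since $1 \le |N|+1, |T|+1$, Lemma~\ref{lem:log invertibles} converts this to $\log(|N|+1) \prec\prec \log(|T|+1)$; and then apply Theorem~\ref{hlp theorem} with the convex function $z \mapsto \Phi(e^z - 1)$ (note $\Phi(e^z-1) = (\Phi\circ\exp)(\log(e^z-1))$ composed appropriately --- one checks $s \mapsto \Phi(e^s - 1)$ is convex when $\Phi\circ\exp$ is increasing and convex, since $e^s - 1 = \exp(\log(e^s-1))$ and $\log(e^s-1)$ is convex increasing in $s$... actually the cleanest route is: $\Phi$ increasing with $\Phi\circ\exp$ convex implies $u \mapsto \Phi(u)$ composed with the convex increasing map $s \mapsto \log(e^s - 1)$... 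I would verify directly that $g(s) := \Phi(e^s-1)$ has $g(s) = (\Phi\circ\exp)(\log(e^s-1))$ and $\log(e^s-1)$ is convex and increasing, so $g$ is convex as a composition of a convex increasing function with a convex function). This yields $\tau(\Phi(|N|)) = \tau(g(\log(|N|+1))) \le \tau(g(\log(|T|+1))) = \tau(\Phi(|T|))$.

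The main obstacle I expect is the first step: proving cleanly that ${\rm Exp}_{\mathcal{D}_n}(T) \prec\prec_{\log} T$, i.e., that replacing each diagonal block $f^n_k T f^n_k$ by the scalar $\frac{\tau(f^n_k T f^n_k)}{\tau(f^n_k)} f^n_k$ does not increase the log-submajorization class. One wants an inequality of the shape ``$\Delta$ of the scalarized block $\le$ $\Delta$ of all lower-truncations of the original block,'' which is essentially a statement that the spectral radius (or center of mass modulus) of an operator is log-submajorized by its singular value function --- this is the II$_1$ analogue of Weyl's inequality $\lambda(T) \prec\prec_{\log} \mu(T)$, which the paper references. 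So the real content is reducing~\eqref{kika} to (a suitable consequence of) that Weyl-type inequality applied block-by-block and patched across the nest via Lemma~\ref{lem:anytrace} and Theorem~\ref{thm:det nu matrix}; once that reduction is in place, the limiting argument and the convexity bookkeeping in parts~\eqref{kika}--\eqref{kikb} are routine.
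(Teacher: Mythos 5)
There are two genuine gaps, one in each part.

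\textbf{Part \eqref{kika}.} Your route through the finite-stage approximants rests on the claim that ${\rm Exp}_{\mathcal{D}_n}(T)\prec\prec_{\log}T$, and in particular that replacing each diagonal block $f^n_kTf^n_k$ by the scalar $\tau(f^n_kTf^n_k)/\tau(f^n_k)$ can only decrease things in the log-submajorization order. That is false. Take $T$ normal with spectral distribution $\tfrac12(\delta_0+\delta_\varepsilon)$ and $n=0$, so that ${\rm Exp}_{\mathcal{D}_0}(T)=\tau(T)1=\tfrac{\varepsilon}{2}1$: then $\Delta({\rm Exp}_{\mathcal{D}_0}(T))=\tfrac{\varepsilon}{2}>0=\Delta(T)$, so $\int_0^t\log\mu(s,{\rm Exp}_{\mathcal{D}_0}(T))\,ds\le\int_0^t\log\mu(s,T)\,ds$ fails for $t$ near $1$. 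The inequality only becomes true in the limit, when the nest separates the Brown measure, and your limiting argument cannot recover it: from $\int_0^t\log\mu(s,B_n)\,ds\le C$ and $\|B_n-N\|_\infty\to0$ one cannot conclude $\int_0^t\log\mu(s,N)\,ds\le C$, because $\log(\mu(s,N))\le\log(\mu(s,B_n)+\varepsilon_n)$ is not controlled by $\log\mu(s,B_n)$ near zeros of $\mu$ (reverse Fatou points the wrong way). Moreover the inequality you explicitly defer as ``the real content'' --- the II$_1$ analogue of Weyl's $\lambda\prec\prec_{\log}\mu$ --- is precisely the nontrivial input; the paper imports it as Lemma 2.20 of \cite{HS1} and applies it directly to $N$ rather than to approximants: since $\nu_N=\nu_T$ and $N$ is normal, $\tau(\log_+(|N|/t))=\int\log_+(|z|/t)\,d\nu_T(z)\le\tau(\log_+(|T|/t))$ for all $t>0$, and Lemma \ref{logmaj another} converts this family of inequalities into $N\prec\prec_{\log}T$. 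Without that external input, your part \eqref{kika} is unproved.

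\textbf{Part \eqref{kikb}.} The function $g(s)=\Phi(e^s-1)$ need not be convex. You assert that $s\mapsto\log(e^s-1)$ is convex, but its derivative is $1/(1-e^{-s})$, which is decreasing, so this map is concave; hence $g=(\Phi\circ\exp)\circ\log(e^{\cdot}-1)$ is a convex increasing function composed with a concave one, which can fail to be convex (e.g.\ $\Phi=\log_+$ gives $g(s)=\log_+(e^s-1)$, which is zero up to $s=\log2$ and concave increasing thereafter). The paper's $n$-scaling is exactly the fix: from \eqref{kika}, homogeneity and Lemma \ref{stamp} give $n|N|+1\prec\prec_{\log}n|T|+1$, then Lemma \ref{lem:log invertibles} gives $\log(n|N|+1)\prec\prec\log(n|T|+1)$, and the function $z\mapsto\Phi(\tfrac1n e^z)=(\Phi\circ\exp)(z-\log n)$ \emph{is} convex, so Theorem \ref{hlp theorem} yields $\tau(\Phi(|N|+\tfrac1n))\le\tau(\Phi(|T|+\tfrac1n))$; one then lets $n\to\infty$ by monotone convergence. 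Your treatment of the equality $\int\Phi(|z|)\,d\mu_T(z)=\tau(\Phi(|N|))$ via $\nu_N=\nu_T$ and normality of $N$ is correct and matches the paper.
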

\begin{proof} Since the Brown measure of $N$ equals the distribution of $N$ and equals the Brown measure of $T$, we have
$$\tau(\log_+(\frac{|N|}{t}))=\int_{|z|\geq t}\log(\frac{|z|}{t})d\nu_N(z)=\int_{|z|\geq t}\log(\frac{|z|}{t})d\nu_T(z).$$
It follows from Lemma 2.20 in \cite{HS1} that
$$\int_{|z|\geq t}\log(\frac{|z|}{t})d\nu_T(z)\leq\tau(\log_+(\frac{|T|}{t})).$$
Therefore,
$$\tau(\log_+(\frac{|N|}{t}))\leq\tau(\log_+(\frac{|T|}{t}))$$
Assertion \eqref{kika} follows now from Lemma \ref{logmaj another}.

The equality in~\eqref{eq:Weyl} holds since $\mu_T=\mu_N$ and $N$ is normal.
Assertion \eqref{kika} together with Lemma \ref{stamp} and homogeneity of logarithmic submajorization shows that
$$n|N|+1\prec\prec_{\log}n|T|+1,\quad n\geq1.$$
Now Lemma~\ref{lem:log invertibles} yields $\log(n|N|+1)\prec\prec\log(n|T|+1)$.
Since $\Phi\circ\exp$ is convex on $\mathbb{R}$, so is the function $\Phi(\frac1n\exp(\cdot))$ and
from Theorem \ref{hlp theorem} we get
$$\tau(\Phi(|N|+\frac1n)))\leq\tau(\Phi(|T|+\frac1n))),\quad n\geq 1.$$
We now infer from the Monotone Convergence Principle that
$$\tau(\Phi(|N|))=\lim_{n\to\infty}\tau(\Phi(|N|+\frac1n))\leq\lim_{n\to\infty}\tau(\Phi(|T|+\frac1n))=\tau(\Phi(|T|)).$$
This proves \eqref{kikb}.
\end{proof}

\end{document}